\newtheorem{theorem}{Theorem}[section]
\newtheorem{proposition}[theorem]{Proposition}
\newtheorem{corollary}[theorem]{Corollary}
\newtheorem{lemma}[theorem]{Lemma}
\newtheorem{claim}[theorem]{Claim}
\theoremstyle{definition}
\newtheorem{definition}[theorem]{Definition}
\newtheorem{remark}[theorem]{Remark}
\newtheorem{example}[theorem]{Example}
\theoremstyle{plain}
\newtheorem{question}[theorem]{Question}
\definecolor{linkblue}{rgb}{0,0,.6}
\definecolor{citered}{rgb}{.7,0,0}
\definecolor{darkgreen}{rgb}{0,.5,0}
\def\C{{\mathbb C}}
\def\R{{\mathbb R}}
\def\Z{{\mathbb Z}}
\def\T{{\mathbb T}}
\newcommand{\De}{\Delta}
\newcommand{\om}{\omega}
\newcommand{\fg}{\mathfrak{g}}
\newcommand{\proj}{\text{proj}}
\newcommand{\aff}{\text{Aff}_{\Z}(\R^n)}
\newcommand{\dd}{\mathrm{d}}
\renewcommand{\Tilde}{\widetilde}
\newcommand{\optional}[1]{} %this makes the optional stuff disappear
\begin{document}

\title{Integrable systems with symmetries: toric, semitoric, and beyond}
\author{Joseph Palmer}
\date{\today}

\maketitle

\begin{abstract}
This article presents an overview of the theory of integrable systems with symmetries, focusing on toric systems, semitoric systems, and their classifications via decorated polygons. 
We discuss certain one-parameter families of integrable systems called semitoric families, and explain how deforming systems through controlled bifurcations in such families (and their generalizations) can be used to construct explicit semitoric systems with prescribed invariants.
%This work provides a foundation that supports exploration beyond semitoric systems and towards more general classes of integrable systems with symmetries.
The first part of the paper serves as a quick introduction to integrable systems for newcomers to the field, such as graduate students, while the majority of the exposition surveys recent developments and technical details that will be of interest to experts.
It closes with a look at future directions, including hypersemitoric systems and complexity one integrable systems.
\end{abstract}

\setcounter{tocdepth}{2}
\tableofcontents

\section{Introduction}

Consider the example of the spherical pendulum: a mass swinging on a rod. Can we predict its motion over time?
Though it has now expanded far beyond its original use,
symplectic geometry was first developed to model the phase space and dynamics of classical dynamical systems, such as this spherical pendulum.
A quantity is said to be conserved by the system if it remains constant as the system evolves over time.
For instance, in the pendulum described above the total energy (the kinetic energy of the motion plus the gravitational potential energy) is conserved by the system.
Roughly speaking, such a dynamical system is called \emph{integrable} if it admits the maximal number of independent conserved quantities.

Many familiar dynamical systems are integrable, such as the two-body problem, the harmonic oscillator, various spinning tops, and the spherical pendulum described above.
In fact, these specific examples share another important property: a rotational symmetry. This symmetry can be represented as an action of the circle $S^1$ on the phase space of the system which preserves the symplectic structure.

There is already a long history of studying group actions which preserve the symplectic form, called \emph{symplectic group actions}, and a special class of such actions which are generated by a function, called \emph{Hamiltonian group actions}. 
Each conserved quantity in an integrable system, viewed as a real valued function, is associated to an infinitesimal symmetry of the system by taking the flow of the Hamiltonian vector field of the function.
This paper is focused on integrable systems in which the symmetries associated to some subset of the conserved quantities generate a Hamiltonian action of the torus $\T^k:=(S^1)^k$.
We start from the beginning, 
assuming no prior knowledge of symplectic geometry or integrable systems. 
That being said, experts can skip the beginning of the paper and jump right into contemporary results, including a detailed exposition on the classification of those four-dimensional systems called \emph{semitoric}, a discussion of certain one-parameter families of systems of this type, called \emph{semitoric families}, and a glimpse of various other recent results around these topics.

To get more precise, on a symplectic manifold $(M,\om)$ of dimension $2n$, an integrable system is the data of $n$ smooth real-valued functions which Poisson commute and whose differentials are almost-everywhere independent. Associated to each function is a vector field called its Hamiltonian vector field, and if following the flows of these Hamiltonian vector fields generates a $\T^n$-action, then the system is called toric and it is extremely well understood (due to the work of Atiyah, Guillemin-Sternberg, and Delzant). 

A large portion of this paper is focused on semitoric integrable systems, which generalize toric integrable systems in dimension four. A semitoric integrable system has two conserved quantities, $J$ and $H$, and we assume, among other things, that the symmetry associated to $J$ is periodic (i.e.~it is an $S^1$-action).
Semitoric systems were classified by Pelayo and V\~{u} Ng\d{o}c~\cite{PeVN2009,PeVN2011} for a generic class of systems, and this classification was later extended to all semitoric systems in~\cite{PPT-nonsimple}.
The original classification and its generalization make use of slightly different invariants, but in both cases the invariant is a polygon decorated with certain marked points and cuts, where each cut is also labeled with extra data (typically a Taylor series and integer).

We discuss the invariants in detail, and we explain how to construct them from a given semitoric system. After stating the classification results, we move on to explaining a technique that can be used to actually produce completely explicit examples of such systems.
The idea is to use a family of integrable systems, depending on a single parameter, to start with a well-understood system (typically toric) and deform it through certain controlled bifurcations (the Hamiltonian-Hopf bifurcation) and obtain the desired semitoric system. This procedure is developed and laid out in~\cite{LFPal,LFPal-SF2}, and applied to produce certain desired systems in various other papers, including~\cite{HoPa2017,HohMeu,GuHo}.

These techniques have been extremely fruitful for semitoric systems. The classification is an important result, which has led to numerous other papers, and the construction techniques have been used to produce a wide variety of semitoric systems (and even hypersemitoric systems). 
It is time to expand the theory. 
Semitoric systems have restrictions on dimension (they are four dimensional) and restrictions on the types of singularities which can arise (only elliptic type or focus-focus). This paper gives the motivation and background needed to understand the theory of semitoric systems, and gives an overview of much of the progress and results around semitoric systems.
We also discuss ongoing efforts to generalize these results to higher dimensional settings and other broader classes of integrable systems. The hope is that the success of semitoric systems provides a road map describing how to generalize this theory to include a wider variety of types of singular points and to include systems of higher dimension.
%good place to discuss hst systems and complexity one systems once we have the references ready

\paragraph{Acknowledgments:} 
%I would like to thank Jun Li, Olguta Buse, and Richard Hind for inviting me to contribute an article to this special issue and also for organizing the excellent AMS special session \textit{Quantitative Aspects of Symplectic Topology} at the AMS Sectional Meeting in Cincinnati, OH in 2023. I learned a great deal in that short conference. Furthermore, 
I'd like to thank my collaborators,  
from whom I learned many things about integrable systems and symplectic geometry, and without which this paper would not have been possible.
Also, I'd like to thank Yael Karshon and Yohann Le Floch, who gave many helpful comments on an early version of the paper.
Finally, as always I am indebted to CK, for her endless patience and support.

%collab list?
%such as \'{A}lvaro Pelayo, Sonja Hohloch, Yohann Le Floch, Yael Karshon, Susan Tolman, Jason Liu, Jaume Alonso, and Xiudi Tang,

\subsection{Outline}

This paper aims to be useful to graduate students, mathematicians from other fields, and experts in this field. Therefore, depending on your background it might make sense to skip around. In particular, Sections~\ref{sec:symplectic-int-ham} and~\ref{sec:toric} act as a mostly self-contained introduction to integrable systems and torus actions. 
Here is what to expect in each section:

\begin{itemize}
    \item \textbf{Section~\ref{sec:symplectic-int-ham}:} We give a brief introduction to symplectic geometry, integrable systems, and Hamiltonian group actions. This is important background for the rest of the paper, but of course can be skipped depending on the background of the reader. We assume essentially no prerequisites except for basic differential geometry, starting from the beginning.
    \item \textbf{Section~\ref{sec:toric}:} We discuss symplectic toric manifolds, as an important example of integrable systems, and discuss their classification in terms of Delzant polytopes. 
    \item \textbf{Section~\ref{sec:local-class}:} We discuss local classifications of integrable systems: we present the Liouville-Arnold-Mineur theorem for regular fibers (Theorem~\ref{thm:LAM}) and discuss non-degenerate singular points and their classification.
    \item \textbf{Section~\ref{sec:st}:} We discuss semitoric integrable systems, which are a useful class of $4$-dimensional integrable systems and are classified by objects called marked semitoric polygons with certain labels on each marked point. The original classification for a generic class of semitoric systems is due to Pelayo and V\~{u} Ng\d{o}c~\cite{PeVN2009,PeVN2011} and was extended to all semitoric systems in~\cite{PPT-nonsimple}.
    In this section we make a special effort to separately achieve two goals:
    \begin{itemize}
        \item In Section~\ref{sec:st-invariants} we define the invariants of semitoric systems as abstract objects (marked polygons with labels) without any reference to a semitoric system,
        \item In Section~\ref{sec:st-construct} we explain how to construct the invariants from a given semitoric system.
    \end{itemize}
    From here we state the classification results and end the section with a useful example.
    \item \textbf{Section~\ref{sec:st-families}:} We discuss \emph{semitoric families} and their generalizations. These are families of integrable systems with a fixed underlying symplectic manifold and Hamiltonian $S^1$-action, but a second integral which depends on a parameter. As the parameter changes the systems can undergo certain bifurcations and these systems are useful for obtaining examples with certain desired properties.
    \item \textbf{Section~\ref{sec:closing}:} The paper closes with a short section giving a quick introduction to many related and interesting topics which, unfortunately, we were not able to discuss in detail in this paper. We mention several topics, such as
    \begin{itemize}[noitemsep]
        \item[\textbf{\ref{sec:close-big-S1-section}:}] The relationship between integrable systems and Hamiltonian $S^1$-actions (especially concerning classifications);
        \item[\textbf{\ref{sec:close-ATFs-symp-top}:}] Almost toric fibrations and symplectic topology;
        \item[\textbf{\ref{sec:close-quantum}:}] Quantization;
        \item[\textbf{\ref{sec:beyond}:}] Generalizing semitoric systems by: \begin{enumerate}
             \item[\textbf{\ref{sec:hst}:}] Reducing the restrictions on the types of singularities (e.g.~hypersemitoric systems);
             \item[\textbf{\ref{sec:close-cmlx-one}:}] Allowing higher dimensions (complexity one systems).
        \end{enumerate} 
    \end{itemize}
    While our discussion of each topic in this section is brief, we include many references to more detailed accounts.
    We end the paper with a brief discussion of future directions in the field.
\end{itemize}

\noindent Now, it's time to get started.

%\noindent Alright folks, here we go.

\section{Symplectic geometry, integrable systems, and Hamiltonian group actions}
\label{sec:symplectic-int-ham}

In this section, we will introduce some important background and motivation for the rest of the paper. 
We move very quickly, only discussing results that will be useful to us, but this brief treatment should suffice as an introduction to the field for someone unfamiliar with it.
This section can be skipped, or skimmed, by experts who are interested in jumping right into the more recent developments, or other readers familiar with this standard background.

\subsection{Symplectic geometry}\label{subsec:symplectic}

Symplectic geometry was originally developed to study classical dynamical systems, but has since expanded into a vast field with deep connections to many areas of math and physics, such as geometric mechanics, algebraic geometry, Floer theory, homological mirror symmetry, and low-dimensional topology. We start with a very quick review of the relevant foundational ideas and notions in symplectic geometry, and cover as much as we can in a few pages, but of course many basic topics are skipped or covered with very little detail. Fortunately, there are many resources to fill these gaps, such as the excellent introductory lecture notes by da Silva~\cite{daSi2008} and the, also excellent, textbook by McDuff-Salamon~\cite{McDSal}.

 Now we give a quick overview of some of the foundational definitions, concepts, and examples in symplectic geometry.
A \emph{symplectic manifold} is a pair $(M,\om)$ such that $M$ is a smooth manifold and $\om$ is a closed, non-degenerate 2-form on $M$.
That is, $d\omega=0$ and for all $p\in M$ if $v\in T_pM$ satisfies $\omega_p(v,w)=0$ for all $w\in T_pM$ then $v=0$. In this case $\omega$ is called a \emph{symplectic form}.
By a version of the Gram-Schmidt process, the existence of such an $\om$ implies that the dimension of $M$ must be even (for a proof, see for instance~\cite{daSi2008}).
Furthermore, the non-degeneracy implies that the wedge of $\frac{1}{2}\dim(M)$ copies of $\om$ is a volume form, so $M$ must be orientable.

The non-degeneracy of $\om$ is equivalent to the fact that for all $p\in M$ the map \begin{align*}\om^\flat_p\colon  T_pM &\to T^*_pM\\  v &\mapsto \om_p(v,\cdot)\end{align*} is a vector space isomorphism (from the definition of non-degenerate it is immediate to show that $\om_p^\flat$ is injective, and this result then follows from the fact that $\dim(T_pM) = \dim(T^*_pM)$).

In the case that $\mathrm{dim}(M)=2$, the notion of a symplectic form coincides with that of a volume form.
An example of a symplectic manifold is $\R^{2n}$ with coordinates $x_1,\ldots, x_n, y_1,\ldots,y_n$ and symplectic form \[\om_0 = \sum_i \mathrm{d}x_i\wedge \mathrm{d}y_i,\]
called the \emph{standard symplectic structure on $\R^{2n}$}.

If $(M,\om)$ and $(M',\om')$ are symplectic manifolds then a diffeomorphism $\phi\colon M\to M'$ is called a \emph{symplectomorphism} if $\phi^* \om' = \om$.
Darboux's theorem states that if $(M,\om)$ is a symplectic manifold of dimension $2n$, then for any $p\in M$ there exists an open neighborhood of $p$ which is symplectomorphic to $\R^{2n}$ with the standard symplectic form $\om_0$. That is, all symplectic manifolds of a given dimension look the same locally: up to symplectomorphism, the only local invariant is dimension.

A submanifold $S$ of $M$ with inclusion map $i\colon S\hookrightarrow M$ is called \emph{isotropic} if $i^*\om = 0$.
Using linear algebra, we can obtain a bound on the dimension of such submanifolds:

\begin{claim}\label{claim:isotroipic-dim}
    If $S\subseteq M$ is isotropic then $\dim(S)\leq \frac{1}{2}\dim(M)$.
\end{claim}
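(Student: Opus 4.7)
The plan is to reduce to a pointwise linear-algebra statement at a single $p \in S$, exploiting the nondegeneracy of $\omega_p$ in the form of the isomorphism $\omega^\flat_p\colon T_pM \to T^*_pM$ introduced above. Fix any $p \in S$ and set $V = T_pM$, $W = T_pS \subseteq V$. Define the symplectic orthogonal
\[
W^\omega = \{\, v \in V : \omega_p(v,w) = 0 \text{ for all } w \in W\,\}.
\]
Because $i^*\omega = 0$, we have $\omega_p(w,w') = 0$ for all $w,w' \in W$, which is precisely the statement $W \subseteq W^\omega$. It therefore suffices to show $\dim W \leq \tfrac{1}{2}\dim V$, and this will follow as soon as we establish the dimension formula
\[
\dim W + \dim W^\omega = \dim V.
\]

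To prove this formula, I would consider the linear map $\Phi \colon V \to W^*$ sending $v$ to the restriction of $\omega_p(v,\cdot)$ to $W$. Equivalently, $\Phi$ is the composition of $\omega^\flat_p\colon V \to V^*$ with the restriction map $V^* \to W^*$. The kernel of $\Phi$ is exactly $W^\omega$ by definition. On the other hand, $\Phi$ is surjective: the restriction map $V^* \to W^*$ is surjective (any functional on $W$ extends to $V$, by choosing a complementary subspace), and $\omega^\flat_p$ is an isomorphism by the nondegeneracy of $\omega_p$ stated earlier. The rank-nullity theorem then gives $\dim V = \dim W^\omega + \dim W^*$, and since $\dim W^* = \dim W$ the asserted formula follows.

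Combining $W \subseteq W^\omega$ with $\dim W^\omega = \dim V - \dim W$ yields $\dim W \leq \dim V - \dim W$, i.e. $2\dim W \leq \dim V$. Since $\dim W = \dim_p S = \dim S$ and $\dim V = \dim M$, this gives $\dim S \leq \tfrac{1}{2}\dim M$. The only genuine content is the dimension formula $\dim W + \dim W^\omega = \dim V$, so I expect the main (and only real) obstacle to be making sure the surjectivity of $\Phi$ is handled cleanly; everything else is a direct unpacking of the isotropic condition and the nondegeneracy hypothesis.
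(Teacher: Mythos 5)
Your proof is correct and follows essentially the same route as the paper: both pass to the tangent spaces at a point, define $W^\omega$, note $W\subseteq W^\omega$, and apply rank–nullity to the map $V\to W^*$, $v\mapsto \omega_p(v,\cdot)|_W$. You simply spell out the surjectivity of this map (factoring through $\omega^\flat_p$ and the restriction $V^*\to W^*$) a bit more explicitly than the paper does.
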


\begin{proof}
    Let $p\in S$, let $V = T_pM$, and let $W=T_pS$. Let \[W^\om = \{v\in V \mid \om_p(v,w)=0 \text{ for all }w\in W\}.\]
    Note that the isotropic condition implies that $W\subseteq W^\om$, so $\dim(W)\leq \dim(W^\om)$. Consider the map $V\to W^*$ given by $v \mapsto \om (v,\cdot)|_W$. Then the kernel of this map is $W^\om$ and its image is $W^*$ (using the fact that $\om$ is non-degenerate), so $\dim(V) = \dim(W^\om) + \dim(W) \geq 2 \dim (W)$.
\end{proof}

A submanifold $L$ of $M$ is called a \emph{Lagrangian submanifold} if it is isotropic and $\mathrm{dim}(L) = \frac{1}{2}\mathrm{dim}(M)$. Lagrangian submanifolds are surprisingly important in symplectic geometry, as we will see in the following sections.

We close the section with one more example. Let $X$ be any manifold and let $\pi \colon T^*X \to X$ be projection. Then the \emph{tautological one form} on $T^*X$ is the one form $\alpha$ defined by $\alpha_{(p,\xi)}(\eta)=\xi(\pi_*\eta)$ for each $p\in X$, $\xi\in T^*_p(X)$, and for $\eta\in T_{(p,\xi)}(T^*_p X)$. Then the two-form $\om = \dd \alpha$ is a symplectic form. If $T^*X$ has local coordinates $(x_i,\xi_i)$, then $\om = \sum_i \dd x_i \wedge \dd \xi_i$.
Thus, $T^*X$ is naturally a symplectic manifold, and moreover the zero section (which is canonically symplectomorphic to $X$) is a Lagrangian submanifold of $T^*X$. 
This example is worked out in full detail in, for instance, Section 2 of~\cite{daSi2008}.

\subsubsection{Hamiltonian vector fields}

Let $f\colon M\to\R$ be any smooth function. 
Then $\mathrm{d}f$ is a one-form on $M$, and since $\om$ is a two-form we can obtain a one-form by plugging a single vector field into $\om$. Due to the fact that $\om$ is non-degenerate, there always exists a unique vector field to plug into $\om$ which produces $\mathrm{d}f$. That is, any function $f$ determines a vector field $\mathcal{X}_f$, called the \emph{Hamiltonian vector field of $f$}, via the equation
\begin{equation}\label{eqn:Xf}
 -\mathrm{d} f = \om(\mathcal{X}_f, \cdot).
\end{equation}
Not all authors include the negative sign above, but it is traditional in physics so we include it here.
Note that Equation~\eqref{eqn:Xf} is analogous to the equation defining the gradient vector field in Riemannian geometry, except for the negative sign and replacing the metric with the symplectic form. For this reason, the Hamiltonian vector field of $f$ is sometimes called the \emph{symplectic gradient of $f$}.

 Note that it is straightforward to check that $f$ is preserved by the flow of $\mathcal{X}_f$:
\[
 \mathcal{L}_{\mathcal{X}_f}f = \mathcal{X}_f f = \mathrm{d}f(\mathcal{X}_f) = -\om(\mathcal{X}_f,\mathcal{X}_f) = 0
\]
and it is similarly straightforward to check, using Cartan's magic formula and the fact that $\om$ is closed, that $\om$ is preserved by the flow of $\mathcal{X}_f$:
\[
 \mathcal{L}_{\mathcal{X}_f}\om = \mathrm{d} (\om(\mathcal{X}_f,\cdot)) + (\mathrm{d} \om)(\mathcal{X}_f,\cdot,\cdot) = \mathrm{d}(-\mathrm{d}f) + 0 = 0.
\]
The flow of $\mathcal{X}_f$, when it exists, thus preserves both $\om$ and $f$, so the dynamics coming from this flow lie on level sets of $f$. Equation~\eqref{eqn:Xf} is a simple version of one of the key ideas in the study of integrable systems: it connects dynamics and group actions (the flow of $\mathcal{X}_f$) with level sets of a function (the level sets of $f$).

\subsection{Integrable systems}
\label{sec:integrable_systems}

Now we continue on towards the definition of integrable systems, and some first consequences.

\subsubsection{Motivation}
We start with a general (and somewhat vague) example, skipping the details and all complications, just to explain the concept behind integrable systems.
Suppose that $(M,\om)$ is a symplectic manifold which represents the phase space of a classical dynamical system, so that each point in $M$ represents a state of the system (positions and momenta), and suppose that the dynamics are generated by the Hamiltonian vector field $\mathcal{X}_H$ of a function $H:M\to \R$.
That is, the physical system's evolution through different states over time, which corresponds to a path in the phase space $M$, is given by the flow of the vector field $\mathcal{X}_H$.
The typical (but not only) situation is that $M=T^*Q$, where $Q$ is the manifold of possible positions of the system (the \emph{configuration space}) and the cotangent vectors of $Q$ are viewed as describing the momenta of the system (in Section~\ref{subsec:symplectic} we described how cotangent bundles inherit a natural symplectic form).
Any function $f\colon M\to\R$ which is preserved by the dynamics of this system, i.e.~$\mathcal{X}_H(f)=0$, is called a \emph{conserved quantity} of the system. In examples, these are often quantities such as components of the momentum or angular momentum of the system.
Note that $\mathcal{X}_H(H)=\om(\mathcal{X}_H,\mathcal{X}_H)=0$, so the function $H$ is a conserved quantity as well, usually identified with the total energy of a state of the system.

Each conserved quantity $f$ also has a Hamiltonian vector field $\mathcal{X}_f$, and 
\[ \mathcal{X}_f(H) = \mathrm{d}H (\mathcal{X}_f) = -\om(\mathcal{X}_H,\mathcal{X}_f) = \om(\mathcal{X}_f,\mathcal{X}_H) = -\mathrm{d}f (\mathcal{X}_H) = -\mathcal{X}_H(f)=0, \]
using the definition of Hamiltonian vector fields and the fact that $f$ is conserved. 
Thus, the flow of $\mathcal{X}_f$ (for however long it exists) preserves both $\om$ and $H$: it is a symmetry of the system. 
Now suppose that $(M,\om,H)$ is a Hamiltonian dynamical system, and suppose that $f$ is a conserved quantity. 
If $g$ is another conserved quantity, it is natural to desire that $g$ is not only preserved by the dynamics (the flow of $\mathcal{X}_H$), but also that it is invariant under the symmetry induced by $f$ (the flow of $\mathcal{X}_f$). 
That is, in order for $g$ to be invariant under the dynamics and symmetries of the system, we want both $\mathcal{X}_H(g)=0$ and $\mathcal{X}_f(g)=0$.

The main idea of integrable systems is to have many such conserved quantities which are all independent of one another.  Furthermore, note that from this point of view the Hamiltonian or energy function $H$ and the conserved quantity $f$ play symmetric roles, so we will not usually designate which function is the Hamiltonian. 
Each additional conserved quantity makes the system easier to understand, since the dynamics is restricted to the joint level sets of all conserved quantities and the entire system is invariant under the flow of the Hamiltonian vector field of each function.

\subsubsection{The definition of an integrable system}

To streamline the following discussion, let us introduce the natural Poisson bracket
 induced by the symplectic form: given two functions $f,g\in C^\infty (M)$, the \emph{Poisson bracket} of $f$ and $g$ is the smooth function on $M$ given by:
\[
 \{f,g\} = \om(\mathcal{X}_f,\mathcal{X}_g).
\]
Note that
\[
 \mathcal{X}_f(g) = -\mathrm{d}g (\mathcal{X}_f) = -\om (\mathcal{X}_g,\mathcal{X}_f) = \{f,g\}
\]
so the condition that $g$ is invariant under the flow of $\mathcal{X}_f$ is equivalent to $\{f,g\}=0$. If $\{f,g\}=0$ then we say that $f$ and $g$ \emph{Poisson commute}.
We say that a collection of functions $f_1,\ldots, f_k\in C^\infty(M)$ are \emph{almost everywhere independent} if $\mathrm{d}_pf_1,\ldots, \mathrm{d}_p f_k\in T^*_pM$ are linearly independent for almost all $p\in M$.

\begin{lemma}\label{lem:max-num-integrals}
    Let $(M,\om)$ be a symplectic manifold of dimension $2n$, and suppose that \[f_1,\ldots,f_k\colon M\to\R\] are smooth functions which pairwise Poisson commute and are linearly independent almost everywhere.
    Then $k\leq n$.
\end{lemma}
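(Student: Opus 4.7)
The plan is to reduce the statement to the pointwise, linear-algebraic content of Claim~\ref{claim:isotroipic-dim}. By hypothesis, there exists at least one point $p \in M$ at which $\mathrm{d}_pf_1,\ldots,\mathrm{d}_pf_k$ are linearly independent. I will work entirely inside the symplectic vector space $(T_pM, \om_p)$ at such a point.

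First I would introduce $W \subseteq T_pM$, the subspace spanned by the Hamiltonian vector fields $\mathcal{X}_{f_1}(p), \ldots, \mathcal{X}_{f_k}(p)$. The key preliminary step is to show $\dim W = k$. This follows from the fact, established in Section~\ref{subsec:symplectic}, that $\om^\flat_p \colon T_pM \to T^*_pM$ is a vector space isomorphism. Since $\om^\flat_p$ sends $\mathcal{X}_{f_i}(p)$ to $-\mathrm{d}_pf_i$, and the covectors $\mathrm{d}_pf_1,\ldots,\mathrm{d}_pf_k$ are linearly independent by choice of $p$, the vectors $\mathcal{X}_{f_1}(p), \ldots, \mathcal{X}_{f_k}(p)$ must be linearly independent as well.

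Next I would show that $W$ is isotropic in the linear-algebraic sense, i.e.\ that $\om_p$ restricted to $W \times W$ is zero. By bilinearity it suffices to check this on the spanning vectors, and by definition of the Poisson bracket we have
\[
 \om_p(\mathcal{X}_{f_i}(p), \mathcal{X}_{f_j}(p)) = \{f_i, f_j\}(p) = 0
\]
for all $i,j$, since the $f_i$ pairwise Poisson commute.

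Finally, I would apply the linear-algebra bound from the proof of Claim~\ref{claim:isotroipic-dim}: in any $2n$-dimensional symplectic vector space, an isotropic subspace has dimension at most $n$. (Indeed, the proof given there uses only linear algebra on $T_pM$ and never actually uses that the isotropic object is a submanifold.) Applied to $W \subseteq T_pM$, this yields $k = \dim W \leq \tfrac{1}{2}\dim M = n$, which is exactly the claim. The only subtle point is recognizing that the argument of Claim~\ref{claim:isotroipic-dim} is really pointwise, so that the construction of an integral submanifold through $p$ (e.g.\ via Frobenius, using the fact that Poisson commuting functions have commuting Hamiltonian vector fields) is unnecessary — but that construction is available as an alternative route if one prefers to invoke Claim~\ref{claim:isotroipic-dim} as stated.
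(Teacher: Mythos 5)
Your proposal is correct and follows the paper's proof almost verbatim: pick a point where the differentials are independent, observe that the span of the Hamiltonian vector fields there is isotropic because $\om_p(\mathcal{X}_{f_i},\mathcal{X}_{f_j})=\{f_i,f_j\}=0$, and apply Claim~\ref{claim:isotroipic-dim}. Your added remark that the argument in Claim~\ref{claim:isotroipic-dim} is purely pointwise linear algebra (so it applies directly to the subspace $W$ without manufacturing an integral submanifold) is a fair observation, since the paper itself cites the claim for an isotropic subspace even though the claim is nominally stated for submanifolds.
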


\begin{proof}
    Let $p\in M$ be any point for which $\mathrm{d}_pf_1,\ldots, \mathrm{d}_pf_k$ are linearly independent. Note this is equivalent to the fact that $\mathcal{X}_{f_1},\ldots, \mathcal{X}_{f_k}$ are linearly independent at $p$, so they span a $k$-dimensional subspace $V$ of $T_pM$. Furthermore, $\om(\mathcal{X}_{f_i},\mathcal{X}_{f_j}) = \{f_i,f_j\} = 0$ for all $i,j$, so $\om_p$ vanishes on $V$. Since $V$ is thus an isotropic subspace, the result follows by Claim~\ref{claim:isotroipic-dim}.
\end{proof}

With Lemma~\ref{lem:max-num-integrals} in mind, the largest number of such functions possible is $\frac{1}{2}\dim(M)$, and so we give this situation a special name: an integrable system.

\begin{definition}
    A \emph{(completely) integrable system} is a triple $(M,\om,F)$ where $(M,\om)$ is a symplectic manifold of dimension $2n$ and \[F = (f_1,\ldots,f_n)\colon M\to \R^n\] is a smooth function, called the \emph{momentum map}, whose components $f_1,\ldots, f_n\in C^\infty(M)$ Poisson commute and are linearly independent almost everywhere. The points at which the linear independence fails are called the \emph{singular points} of the system.
\end{definition}

We have motivated integrable systems from the point of view of mechanics, but we will see that they appear in many areas of mathematics.

\subsubsection{First consequences}
\label{sec:first-conseq}

In this section, several basic facts about symplectic geometry are established, all with an eye towards integrable systems.

We say that $X$ is a \emph{symplectic vector field} if $\mathcal{L}_X\om = 0$. Note that, since $\om$ is closed,
\[
 \mathcal{L}_X \om = \iota_X \mathrm{d}\om + d(\iota_X \om) = d(\iota_X \om),
\]
where $\iota_X \om = \om (X,\cdot)$ as usual.
 By definition, $X$ is a \emph{Hamiltonian vector field} if $\iota_X\om = -\mathrm{d}f$ for some function $f$. 
 Thus, $X$ is a symplectic vector field if and only if $\iota_X\om$ is closed and $X$ is a Hamiltonian vector field if and only if $\iota_X\om$ is exact.

 \begin{claim}
     Let $X$ and $Y$ be symplectic vector fields. Then $\mathcal{X}_{\om(X,Y)} = [X,Y]$.
 \end{claim}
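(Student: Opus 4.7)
The plan is to verify directly that the vector field $[X,Y]$ satisfies the defining equation~\eqref{eqn:Xf} for the Hamiltonian vector field of the function $\om(X,Y)$. That is, I will show that $\iota_{[X,Y]}\om = -\dd(\om(X,Y))$. Since $\om$ is non-degenerate, the Hamiltonian vector field of any smooth function is unique, so this identity forces $[X,Y] = \mathcal{X}_{\om(X,Y)}$.

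The computation rests on two standard identities from the Cartan calculus: Cartan's magic formula $\mathcal{L}_Z = \iota_Z \dd + \dd \iota_Z$, and the commutator identity $\iota_{[X,Y]} = \mathcal{L}_X \iota_Y - \iota_Y \mathcal{L}_X$. The strategy is first to use the commutator identity, obtaining
\[
 \iota_{[X,Y]}\om = \mathcal{L}_X(\iota_Y\om) - \iota_Y(\mathcal{L}_X\om).
\]
The second term vanishes immediately because $X$ is symplectic, i.e.~$\mathcal{L}_X\om = 0$. To handle the first term, I apply Cartan's formula to rewrite $\mathcal{L}_X(\iota_Y\om) = \dd(\iota_X \iota_Y\om) + \iota_X\dd(\iota_Y\om)$, and then observe that $\dd(\iota_Y\om) = \mathcal{L}_Y\om - \iota_Y\dd\om = 0$, using that $Y$ is symplectic and $\om$ is closed.

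This leaves $\iota_{[X,Y]}\om = \dd(\iota_X\iota_Y\om)$. Unwinding definitions, $\iota_X\iota_Y\om = \om(Y,X) = -\om(X,Y)$ by antisymmetry, so
\[
 \iota_{[X,Y]}\om = -\dd(\om(X,Y)),
\]
which is exactly equation~\eqref{eqn:Xf} with $f = \om(X,Y)$, so $[X,Y]=\mathcal{X}_{\om(X,Y)}$.

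There is no real obstacle here; the only thing to be careful about is the sign convention, since the paper puts a minus sign into~\eqref{eqn:Xf} and we need the antisymmetry of $\om$ to produce the compensating sign so that everything lines up. Once the two Cartan-type identities are invoked, the symplectic hypothesis on $X$ kills one term and the symplectic hypothesis on $Y$ (together with closedness of $\om$) kills another, which is what makes the identity so clean.
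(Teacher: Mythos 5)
Your proof is correct and follows essentially the same route as the paper's: both invoke the commutator identity $\iota_{[X,Y]} = \mathcal{L}_X\iota_Y - \iota_Y\mathcal{L}_X$, expand via Cartan's magic formula, and kill the extra terms using $\dd\om=0$ and the symplectic hypotheses, finishing with the antisymmetry $\iota_X\iota_Y\om = -\om(X,Y)$. The only cosmetic difference is the order in which the Cartan-type identities are applied before the vanishing terms are identified.
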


\begin{proof}
    Recall that $\mathrm{d}(\iota_X\om) = \mathrm{d}(\iota_Y\om) =0$ and note that $\iota_{[X,Y]}(\cdot) = \mathcal{L}_X(\iota_Y(\cdot)) - \iota_Y(\mathcal{L}_X(\cdot))$. 
    Thus,
    \begin{align*}
        \om([X,Y],\cdot) &= \iota_{[X,Y]}\om\\
        &=\mathcal{L}_X(\iota_Y \om)-\iota_Y(\mathcal{L}_X\om)\\
        &=\mathrm{d}(\iota_X \iota_Y \om)+\iota_X(d(\iota_Y\om)) - \iota_Y(\iota_X (\mathrm{d}\om)+\mathrm{d}(\iota_X\om))\\
        &=\mathrm{d}(\iota_X \iota_Y \om)+\iota_X(0) - \iota_Y(\iota_X (0)+0)\\
        &=-\mathrm{d}(\om(X,Y)). \qedhere
    \end{align*}
\end{proof}

Applying the previous claim to Hamiltonian vector fields, we see that for $f,g\in C^\infty(M)$, since $\{f,g\} = \om(\mathcal{X}_f,\mathcal{X}_g)$, we have
\[
 \mathcal{X}_{\{f,g\}} = [\mathcal{X}_f,\mathcal{X}_g].
\]
In particular, if $\{f,g\}=0$, then $[\mathcal{X}_f,\mathcal{X}_g]=0$, which leads to the following corollary.
\begin{corollary}
    If $(M,\om,F)$ is an integrable system and $F=(f_1,\ldots,f_n)$, then $[\mathcal{X}_{f_i},\mathcal{X}_{f_j}]=0$ for all $i,j$.
\end{corollary}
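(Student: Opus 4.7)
The plan is to observe that this is an essentially immediate consequence of the preceding claim together with the definition of an integrable system; there is no real obstacle, and the proof is a one-line chain of identifications. The only thing worth being careful about is that the previous claim was stated for symplectic vector fields, so one should first note that Hamiltonian vector fields are automatically symplectic, which was already recorded just above the claim (since $\iota_{\mathcal{X}_f}\om = -\dd f$ is exact, hence closed, hence $\mathcal{L}_{\mathcal{X}_f}\om = 0$).

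Concretely, I would proceed as follows. Fix indices $i,j\in\{1,\ldots,n\}$. Since $\mathcal{X}_{f_i}$ and $\mathcal{X}_{f_j}$ are Hamiltonian, they are in particular symplectic, so the preceding claim applies and gives
\[
    [\mathcal{X}_{f_i},\mathcal{X}_{f_j}] = \mathcal{X}_{\om(\mathcal{X}_{f_i},\mathcal{X}_{f_j})}.
\]
By the very definition of the Poisson bracket, $\om(\mathcal{X}_{f_i},\mathcal{X}_{f_j}) = \{f_i,f_j\}$, and because $(M,\om,F)$ is an integrable system, the components $f_1,\ldots,f_n$ pairwise Poisson commute, so $\{f_i,f_j\}=0$. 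Finally, the Hamiltonian vector field of the zero function is the zero vector field (from $-\dd 0 = 0 = \om(0,\cdot)$ and non-degeneracy of $\om$), so $\mathcal{X}_{\{f_i,f_j\}}=0$, and combining these identifications gives $[\mathcal{X}_{f_i},\mathcal{X}_{f_j}]=0$, as desired.

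Since every step is either a definition, the previous claim, or the defining hypothesis of an integrable system, there is no genuine difficulty here — the corollary is really just a packaging of those three inputs. The only minor pedagogical choice is whether to write out the chain as a single displayed equation
\[
    [\mathcal{X}_{f_i},\mathcal{X}_{f_j}] = \mathcal{X}_{\om(\mathcal{X}_{f_i},\mathcal{X}_{f_j})} = \mathcal{X}_{\{f_i,f_j\}} = \mathcal{X}_0 = 0,
\]
or to spell out each equality in prose; I would probably opt for the compact displayed form to emphasize that the content of the corollary is entirely captured by the preceding claim.
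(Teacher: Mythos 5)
Your proof is correct and matches the paper's reasoning exactly: the paper derives the corollary by applying the preceding claim to Hamiltonian vector fields to get $\mathcal{X}_{\{f,g\}}=[\mathcal{X}_f,\mathcal{X}_g]$, then notes that $\{f_i,f_j\}=0$ forces the bracket to vanish. Your added observations (Hamiltonian implies symplectic, and $\mathcal{X}_0=0$ by non-degeneracy) simply make explicit two small steps the paper leaves implicit.
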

Thus, the flows of each of the $\mathcal{X}_{f_i}$ in an integrable system, whenever they exist, commute.
Recall that a map is called \emph{proper} if the preimage of each compact set is compact. If $(M,\om,F)$ is an integrable system and $F$ is proper, then each fiber of $F$ is compact. 
In particular, if $F$ is proper then each of the vector fields $\mathcal{X}_{f_i}$ are complete, and since the flows commute they generate an action of $\R^n$ on $M$. We will call this the \emph{induced $\R^n$-action on $M$}. 
More precisely, suppose that $F$ is proper and let $\psi^t_i\colon M\to M$ be the time $t$ flow of the vector field $\mathcal{X}_{f_i}$. Then the $\R^n$-action on $M$ is given by
\begin{align}\label{eqn:intsys-groupaction}
    \R^n\times M &\to M\\ \nonumber
    ((t_1,\ldots,t_n),p) &\mapsto \psi_1^{t_1}\circ \ldots \circ \psi_n^{t_n}(p)
\end{align}
This induced action is the key connection between integrable systems and Hamiltonian group actions, which we discuss in Section~\ref{sec:ham-int-interactions}.

As discussed earlier, $\mathcal{X}_f(g) = \{f,g\}$, so the Poisson commuting condition in an integrable system implies that each $f_i$ is invariant under the flow for as long as it exists of each $\mathcal{X}_{f_j}$. In other words, the induced $\R^n$-action stays on level sets of $F$.

\subsubsection{Examples of integrable systems}

Here we give several important examples.

\begin{example}[Harmonic Oscillator]
    Let $M = \C^n$ with the symplectic form $\om = \frac{i}{2}\sum \mathrm{d}z_i\wedge \mathrm{d}\overline{z}_i$ and let $F\colon M\to\R^n$ be given by 
    \[
     F(z_1,\ldots,z_n) = \frac{1}{2}\left(|z_1|^2,\ldots,|z_n|^2\right).
    \]
    Since $\mathrm{d}f_k = \frac{1}{2}(\overline{z}_k\dd z_k + z_k \dd \overline{z}_k)$ we obtain that $\mathcal{X}_{f_k} = i\left(z_k \frac{\partial}{\partial z_k} - \overline{z}_k \frac{\partial}{\partial \overline{z}_k}\right)$. To have a clearer view of the flow of this vector field, we can switch to polar coordinates via $z_k = r_k \mathrm{e}^{i \theta_k}$, in which case $\mathcal{X}_{f_k} = \frac{\partial}{\partial \theta_k}$. Therefore, the $\R^n$-action induced by this integrable system is the map $\R^n\times \C^n \to \C^n$ given by
    \[
     ((t_1,\ldots,t_n),(r_1\mathrm{e}^{i\theta_1},\ldots, r_n\mathrm{e}^{i\theta_n})) \mapsto \left(r_1\mathrm{e}^{i(\theta_1 + t_1)}, \ldots, r_n\mathrm{e}^{i(\theta_n + t_n)}\right).
    \]
    Note that this action is periodic, and therefore descends to an action of the compact group $\R^n/(2\pi \Z)$.
    This system can equivalently be viewed as an integrable system on $\R^{2n}$ with coordinates $(x_1,\ldots,x_n,y_1,\ldots,y_n)$, symplectic form $\sum \dd x_i\wedge \dd y_i$, and momentum map
    \[
     F(x_1,\ldots,x_n,y_1,\ldots,y_n) = \frac{1}{2}(x_1^2+y_1^2,\ldots,x_n^2+y_n^2).
    \]
\end{example}

\begin{example}\label{ex:cot-bundle-torus}
 Consider the cotangent bundle $M=T^*\T^n$ of the $n$-dimensional torus $\T^n := (S^1)^n$. Using the identification from $M$ to $\T^n\times\R^n$, there are coordinate functions \[q_1,\ldots, q_n\colon M\to S^1 \text{ and } p_1,\ldots, p_n\colon M\to\R.\] The standard symplectic form on $M$ is $\om_{T^*\T^n}=\sum_i \mathrm{d} p_i \wedge \mathrm{d} q_i$, and 
 with the projection map $\pi_{\R^n}\colon M\to\R^n$ given by
\[
 \pi_{\R^n}  = (p_1,\ldots,p_n)\colon M \to \R^n
\]
we obtain an integrable system $(T^*\T^n,\om_{T^*\T^n}, \pi_{\R^n})$. 
 The Hamiltonian flows of the functions $p_1,\ldots,p_n$ generate a $\T^n$-action on $M$ (that is, it is a toric integrable system, see Example~\ref{example:toric-int-syst}).
\end{example}

\begin{example}
An important physical example is the simple pendulum (or 2d pendulum), which models a mass swinging on a rigid rod which can only go back and forth (as opposed to Example~\ref{ex:spherical-pen}, in which the pendulum can swing in all directions). A configuration of this system is given by the position of the mass, encoded by the angle of the rod $\theta\in S^1$, so the phase space is $T^*S^1 \cong S^1\times \R$. Taking coordinates $(\theta,p)\in S^1\times \R$, the standard symplectic form is $\dd \theta \wedge \dd p$, the Hamiltonian of the system is the sum of the kinetic and potential energies 
\[H(\theta,p) = \frac{1}{2}p^2 + \cos(\theta),\] 
and the dynamics of the system is given by the flow of the Hamiltonian vector field of $H$. Since $T^*S^1$ is only $2$-dimensional, only one function is needed to form an integrable system, so $(T^*S^1,\dd \theta \wedge \dd p,H)$ is an integrable system.
\end{example}

\begin{example}[Spherical pendulum]\label{ex:spherical-pen}
We will now describe the example of the spherical pendulum, which models the motion of a mass swinging on a rod. The configuration space of this system is $S^2$ (corresponding to the position of the mass).
Let $(\phi,\theta)$ be spherical coordinates on $S^2$. That is, if $S^2\subset \R^3$ is viewed as the unit sphere in $\R^3$ with coordinates $x,y,z$ then 
\[
 x = \sin(\phi)\cos(\theta), \quad y = \sin(\phi)\sin(\theta), \quad z = \cos(\phi).
\]
Let $p_\phi$ and $p_\theta$ be the coordinates along the fibers of $T^*S^2$ induced by $\phi$ and $\theta$. Define $H\colon T^*S^2\to\R$ by
\[
 H(\phi,\theta,p_\phi, p_\theta) = \frac{1}{2}\left(p_\phi^2 + \frac{p_\theta^2}{\sin^2(\phi)}\right) + \cos(\phi).
\]
Note that it looks like there is a singularity when $\sin(\phi)=0$, but this is due to the choice of coordinates and $H$ is in fact smooth.
Define $J\colon T^*S^2\to \R$ by
\[
 J(\phi,\theta,p_\phi,p_\theta) = p_\theta.
\]
Physically, $H$ represents the total energy of a given state (the first term is kinetic and the second is potential) and $J$ represents the angular momentum. Let $\om = \dd \phi \wedge \dd p_\phi + \dd\theta \wedge \dd p_\theta$. Then $(T^*S^2,\om,F)$ is an integrable system, where $F=(J,H)$.

There are two points where $dF=0$, both of which occur in the zero section of $T^*S^2$. There is a stable singularity at the point corresponding to the south pole of the sphere (representing when the pendulum is hanging down at rest) and an unstable singularity at the point corresponding to the north pole of the sphere (representing the position of the pendulum balancing straight upwards). Using terminology that we have not defined yet (see Theorem~\ref{thm:eliasson}), the point at the south pole is an elliptic-elliptic point and the point at the north pole is a focus-focus point.
\end{example}

There are many physical examples of integrable systems. For more examples, see for instance~\cite{arnold89,PeVNsymplthy2011} and the references therein. We close this section with an example that will be important for us in Section~\ref{sec:local-class}.

\begin{example}\label{ex:non-deg-examples}
Here we will describe three examples of singular points that we will later see are used to build a large class of singularities. The idea is that products of the following examples will serve as local models for certain singular points (see Section~\ref{subsec:singularities}).

\begin{itemize}
    \item \textbf{Elliptic model:} Let $M=\R^2$ with coordinates $(x,y)$, let $\om = \dd x \wedge \dd y$, and define $F\colon M\to \R$ by 
    \[
     F(x,y) = x^2 + y^2.
    \]
    Then $(M,\om, F)$ is an integrable system. The flow of $\mathcal{X}_F$ generates an $S^1$-action on $\R^2$ which rotates the plane and has a single fixed point, at the origin.
    \item \textbf{Hyperbolic model:} Let $M = \R^2$ with coordinates $(x,y)$, let $\om = \dd x \wedge \dd y$, and define $F\colon M \to \R$ by
    \[
     F(x,y) = xy.
    \]
    Then $(M,\om,F)$ is an integrable system. The flow of $\mathcal{X}_F$ generates an $\R$-action on $\R^2$ which has exactly one fixed point, at the origin. The fibers of $F$ are not connected, and for any $c\in \R\setminus \{0\}$ the fiber $F^{-1}(c)$ is diffeomorphic to the disjoint union of two copies of $\R$.
    \item \textbf{Focus-focus model:} Let $M = \R^4$ with coordinates $(x_1,y_1,x_2,y_2)$, let $\om = \sum_i \dd x_i \wedge \dd y_i$, and define $F = (f_1,f_2)$ where
    \begin{align*}
        f_1(x_1,y_1,x_2,y_2) &= x_1y_2-x_2y_1,\\
        f_2(x_1,y_1,x_2,y_2) &= x_1y_1+x_2y_2.
    \end{align*}
    Then $(M,\om,F)$ is an integrable system. The flow $\mathcal{X}_{f_1}$ generates an $S^1$-action while $\mathcal{X}_{f_2}$ only generates an $\R$-action. The $(S^1\times\R)$-action has a single fixed point, at the origin. The fiber $F^{-1}(0,0)$ is two planes meeting transversely at a single point, and the $S^1$-action rotates each of these planes.
    \item \textbf{Products:} Taking products of any of the above models, along with the regular model $(x,y)\mapsto y$ can produce a variety of singular points, which are typically described by listing the products. For instance, the elliptic-focus-focus-regular model (sometimes called the EFFR-model) would be the integrable system with $M = \R^8$ with coordinates $(x_1,\ldots,x_4,y_1,\ldots,y_4)$, symplectic form $\om = \sum_i \dd x_i \wedge \dd y_i$, and momentum map $F = (f_1,\ldots,f_4)$ with
    \begin{align*}
        f_1 &= x_1^2+y_1^2,\\
        f_2 &= x_2y_3-x_3y_2,\\
        f_3 &= x_2y_2+x_3y_3,\\
        f_4 &= y_4.
    \end{align*}
    The flows of the Hamiltonian vector fields in this case produce a $(\T^2\times \R^2)$-action.
\end{itemize}
\end{example}

\subsection{Hamiltonian group actions}

In this section we quickly introduce the concepts from the theory of Hamiltonian group actions which will be useful for our study of their interactions with integrable systems.
In Section~\ref{sec:integrable_systems} we started with a function and used it to induce a group action (via the flow of the Hamiltonian vector field), while in this section we will view the group action as the foundational object.

\subsubsection{Motivation}

Let $(M,\om)$ be a symplectic manifold, let $\phi\colon \R\times M\to M$ be an $\R$-action, 
and let $X$ be the vector field on $M$ generated by the $\R$-action in the following sense:
\[
    X_p = \frac{\dd}{\dd t}\Big|_{t=0} \phi(t,p).
\]
Then we say that this action is \emph{symplectic} if $X$ is a symplectic vector field, and we say that this action is \emph{Hamiltonian} if $X$ is a Hamiltonian vector field (see Section~\ref{sec:first-conseq}). 

%$\mathcal{L}_X\om = 0$, but note that for any vector field $X$ we have \[\mathcal{L}_X\om = \dd(\iota_X\om) + \iota_X(\dd \om) = \dd (\iota_X\om),\] so $\phi$ is symplectic if and only if $\iota_X\om$ is closed. We say that $\phi$ is a \emph{Hamiltonian vector field} if furthermore $\iota_X \om$ is exact.

This notion is compatible with our earlier concept of the Hamiltonian vector field of a function in the following sense:
if $X$ is the vector field generated by the $\R$-action $\phi$, then $\phi$ is Hamiltonian if and only if there exists a smooth function $f\colon M\to \R$ for which $X = \mathcal{X}_f$, where $\mathcal{X}_f$ is the Hamiltonian vector field of $f$ as in Equation~\eqref{eqn:Xf}.

The above situation of a Hamiltonian $G$-action with $G=\R$ can be seen as a motivation for the general theory of Hamiltonian group actions. Speaking very roughly, the idea is that for each vector in the Lie algebra, we get a one-parameter subgroup of $G$, and we ask that the vector field generated by the action of this subgroup be Hamiltonian as above. So for each $v\in \mathfrak{g}$, we want a function $f_v\in C^\infty(M)$. 
Thus, for each $p\in M$ we obtain a map from $\mathfrak{g}\to\R$ taking $v$ to $f_v(p)$, and if we require this assignment to be linear, then we have a map from $M$ to $\mathfrak{g}^*$. 
While this discussion is informal, it hopefully motivates why a map $M\to\mathfrak{g}^*$ is natural in this context. We will now make this discussion precise with a formula which is analogous to Equation~\eqref{eqn:Xf}.

Let $G$ be a group, and assume that $G$ acts smoothly on a symplectic manifold $(M,\om)$. This action is called \emph{Hamiltonian} if there exists a map $\mu \colon M \to \fg^*$, called the \emph{moment map}, with the following properties:
\begin{enumerate}
    \item[(1)] For all $v\in  \mathfrak{g}$, \begin{equation}\label{eqn:general-ham-action}
    \omega(v^{\#},\cdot )=-\dd \langle \mu, v\rangle,
    \end{equation}
    where $v^{\#}$ denotes the vector field generated by the action of the subgroup $\{\mathrm{exp}(tv)\mid t\in\R\}\subseteq G$ and $\langle \cdot,\cdot\rangle$ is the usual duality pairing\footnote{that is, $\langle \mu,v\rangle$ denotes the real-valued function on $M$ given by $p\mapsto  \mu_p(v_p)$.},
    \item[(2)] $\mu$ is equivariant with respect to the $G$ action on $M$ and the adjoint action of $G$ on $\mathfrak{g}^*$.
\end{enumerate}

Let us immediately note that the second condition is not necessary if $G$ is abelian, which is the case that we will be interested in for the remainder of this paper. In the case that $G$ is abelian, $\mu$ being a moment map is equivalent to $\mu$ satisfying (1) above and also satisfying:
\begin{enumerate}
    \item[(2')] $\mu$ is invariant under the $G$-action. That is, the value of $\mu$ is preserved by the action of $G$.
\end{enumerate}

If $G$ acts on $(M,\om)$ with momentum map $\mu$, then we call $(M,\om,G,\mu)$ a \emph{Hamiltonian $G$-space}.

Note that if $G=\R$ and we identify the dual Lie algebra of $\R$ with $\R$ itself, then an $\R$-action being Hamiltonian is the same as the $\R$-action being the flow of $\mathcal{X}_f$ for some function $f\colon M\to\R$, since $f$ plays the role of $\mu$.

\optional{should I put more about Hamiltonian group actions here? Some example?}

\subsubsection{Interactions between integrable systems and Hamiltonian group actions}
\label{sec:ham-int-interactions}

Let $(M,\om,F)$ be an integrable system, and suppose that $F$ is proper. Then we obtain an $\R^n$-action as in Equation~\eqref{eqn:intsys-groupaction}, by taking the flow of the commuting vector fields $\mathcal{X}_{f_1},\ldots, \mathcal{X}_{f_n}$. Recall that these flows commute since the functions Poisson-commute, and that the properness of $F$ implies that these vector fields are complete. 
In fact, this $\R^n$-action is Hamiltonian, taking as the momentum map $\mu = \phi \circ F$ where $\phi\colon \R^n \to \mathrm{Lie}(\R^n)$ is the usual identification of $\R^n$ with its Lie algebra.
Indeed, let $e_1,\ldots,e_n\in\R^n$ be the standard basis, then $\mathcal{X}_{f_i} = \phi(e_i)^{\#}$, so taking $v=\phi(e_i)$, Equation~\eqref{eqn:general-ham-action} becomes $\om(\mathcal{X}_{f_i},\cdot) = -\dd \langle \mu, \phi(e_i)\rangle$
and since $\langle \mu, \phi(e_i)\rangle_p = \langle \phi(F(p)),\phi(e_i)\rangle = \langle F(p),e_i\rangle = f_i(p)$, we recover exactly Equation~\eqref{eqn:Xf} for each $f_i$.

Thus, at least in the case that the momentum map is proper, which is automatic if $M$ is compact, an integrable system can be viewed as a Hamiltonian action of $\R^n$. If the flow of each $\mathcal{X}_i$ is periodic, then this descends to an action of the $n$-torus $\T^n = (S^1)^n$, and in more generality, if all but $c$ of the actions are periodic then we obtain a Hamiltonian action of $\T^{n-c}\times \R^c$.

The case that $c=0$ is extremely well understood, as we will discuss in the next section, and later we will deal with higher values of $c$.

\section{Symplectic toric manifolds}
\label{sec:toric}

This section is devoted to the theory, and especially the classification, of symplectic toric manifolds.
Like the previous section, this section can be skipped or skimmed by experts.
Let $(M,\om)$ be a symplectic manifold of dimension $2n$, and suppose that a torus $\T^k$ acts on $M$ in a Hamiltonian fashion. Let $\mu\colon M\to\mathfrak{t}^*$ denote the moment map, and recall that the $\T^k$-action preserves $\mu$ by assumption. Thus, for any $p\in M$ we have that $\T^k\cdot p \subseteq \mu^{-1}(\mu(p))$, where $\T^k\cdot p$ is the $\T^k$-orbit of $p$. 

Recall that a group action is called \emph{effective} if there is no group element which acts trivially. 
It turns out that if there exists an effective and Hamiltonian action of $\T^k$ on $M$ then $k \leq \frac{1}{2}\dim(M)$.
The integer $c = \frac{1}{2}\dim(M) - k$ is called the \emph{complexity} of the $\T^k$-action.

\begin{definition}\label{def:toric}
    A \emph{symplectic toric manifold} is a Hamiltonian $\T^n$-space $(M,\om,\T^n,\Psi)$ where  $n = \frac{1}{2}\dim(M)$ and the $\T^n$-action on $M$ is effective.
\end{definition}

Note that some authors include the requirement that $M$ is compact in the definition above, but some do not, as we have here.

\begin{example}[Symplectic toric manifolds are integrable systems]\label{example:toric-int-syst}
Let $(M,\om,\T^n,\Psi)$ be a symplectic toric $2n$-manifold, and choose an identification $\phi\colon \mathfrak{t}^* \to \R^n$ induced by an isomorphism $\T^n\cong (S^1)^n$. Let 
\[ F = \phi\circ\Psi\colon M \to \R^n.\]
Then we claim that $(M,\om, F)$ is an integrable system. 
Indeed, the fact that $\T^n$ is abelian can be used to show that the components of $F$ Poisson commute, and the fact that the action is effective can be used to show almost everywhere independence. 
Such integrable systems, in which $M$ is compact and which each $\mathcal{X}_{f_i}$ has periodic flow of the same period, are called \emph{toric integrable systems}.
Conversely, any toric integrable system $(M,\om,F)$ can be associated with a symplectic toric manifold by defining the $\T^n$-action by the flows of the Hamiltonian vector fields and taking $\Psi = \phi^{-1}\circ F$ as the momentum map.
\end{example}

As the above example shows, toric integrable systems and symplectic toric manifolds are two different ways to view the same class of objects.

Let $\T^n$ be a torus and suppose that $(M,\om,\T^n,\mu)$ is an effective Hamiltonian $\T^n$-space.
Then Atiyah~\cite{Atiyah} and Guillemin-Sternberg \cite{GuSt1982} showed that the moment image $\mu(M)$ is a convex polytope in $\mathfrak{t}^*$, obtained as the convex hull of the images of the $\T^n$-fixed points. Following this, Delzant~\cite{De1988} described necessary and sufficient conditions on when a convex polytope $\De$ could appear as the moment image for a symplectic toric manifold, and furthermore showed that such polytopes, now called \emph{Delzant polytopes}, classify symplectic toric manifolds. We will review this classification now. For this exposition, we will view the symplectic toric manifolds as toric integrable systems, to better align with our treatment of semitoric systems (and their generalizations) later in the paper.

\subsection{The toric invariant: Delzant polytopes}

First, we will introduce the invariant of toric integrable systems independently, and in the next section we will discuss how to obtain this invariant from a toric integrable system (it is simply the image of the momentum map).

A \emph{convex polytope} $\De$ is a compact set in $\R^n$ which is the intersection of finitely many half-spaces. Equivalently, it is the convex hull of finitely many points, but the definition with half-spaces will be more useful for us. A half-space in $\R^n$ may be described by a normal vector $u\in\R^n$ and a scalar $\lambda$:
\[
 H_{u,\lambda} = \{x\in \R^n \mid x\cdot u \geq \lambda\}.
\]
The vector $u$ is normal to the boundary of $H_{u,\lambda}$ and points inwards towards $H_{u,\lambda}$. An integer vector $u\in \Z^n$ is called \emph{primitive} if whenever $u=kw$ for $w\in\Z^n$ and $k\in\Z$, then $k=\pm 1$. 

 A vertex $v$ of a polytope is a 0-dimensional face (which can also be described as a point of $\De$ which does not lie in the interior of any line segment contained in $\De$).

\begin{definition}
    A convex polytope $\De$ is \emph{rational} if it is the intersection of finitely many half-spaces of the form $H_{u,\lambda}$ where $u\in\Z^n$ and $\lambda\in\R$, and it is called \emph{simple} if $n$ faces meet at each vertex. A vertex $v$ of a rational convex polytope $\De$ is called \emph{smooth} if there are exactly $n$ faces meeting at $v$, and the  primitive inwards pointing normal vectors $u_1,\ldots,u_n\in\Z^n$ determined by the faces meeting at $v$ span the lattice $\Z^n$.
\end{definition}

Note that the existence of the inwards pointing normal vectors with integer entries is guaranteed by the rationality assumption.
Given $n$ vectors $u_1,\ldots, u_n\in\R^n$ we will use $\det(u_1,\ldots, u_n)$ to denote the determinant of the matrix with columns $u_1,\ldots,u_n$.
The condition of smoothness is equivalent to $\det(u_1,\ldots, u_n)=\pm 1$, where $u_1,\ldots, u_n\in\Z$ are the inwards pointing normal vectors of the faces meeting at the vertex $v$.

With these concepts, we are ready to define the invariant of toric integrable systems.

\begin{definition}
    An $n$-dimensional convex polytope $\De\subset \R^n$ is called a \emph{Delzant polytope} if it is rational, simple, and all vertices are smooth.
\end{definition}

In dimension two, the simplicity assumption is automatic.

\begin{example}\label{def:delzant}
Here are some examples of Delzant polytopes:
\begin{enumerate}[label = (\alph*)]
 \item the convex hull of $(0,0), (a,0), (0,b), (a,b)\in\R^2$ for any $a,b\in\R_{>0}$;
 \item the convex hull of $(0,0), (a,0), (0,a)\in\R^2$ for any $a\in\R_{>0}$ (Figure~\ref{fig:yes-delzant});
 \item let $e_1,\ldots,e_n\in\R^n$ be the standard basis, and let $\mathbf{0}\in\R^n$ denote the zero vector. Then the convex hull of $\mathbf{0}, ae_1,\ldots,ae_n\in\R^n$ is Delzant for any $a\in\R_{>0}$;
 \item the convex hull of $(0,0), (0,b), (a,b), (a+kb,0)\in\R^2$ for any $a,b\in\R_{>0}$ and any $k\in\Z_{>0}$.
\end{enumerate}
For a non-example, consider the convex hull of $(0,0), (2,1), (4,0)\in\R^2$. Then the vertex at $(2,1)$ does not satisfy the smoothness condition, since the primitive inwards pointing normal vectors to the adjacent edges are $(1,-2)$ and $(-1,-2)$, which do not span $\Z^2$ (their determinant is $-4$, not $\pm 1$). See Figure~\ref{fig:not-delzant}.
\end{example}

\begin{figure}
\begin{center}

\begin{subfigure}[b]{.4\linewidth}
\centering
\begin{tikzpicture}[scale=1.5]
\filldraw[draw=black, fill=gray!60] (0,0) 
  -- (0,2)
  -- (2,0)
  -- cycle;
\end{tikzpicture}
\caption{This polygon is Delzant.}
\label{fig:yes-delzant}
\end{subfigure}\qquad
\begin{subfigure}[b]{.4\linewidth}
\centering
\begin{tikzpicture}[scale=1.5]
\filldraw[draw=black, fill=gray!60] (0,0) 
  -- (2,1)
  -- (4,0)
  -- cycle;
\end{tikzpicture}
\caption{This polygon is \textbf{not} Delzant since the top vertex is not smooth.}
\label{fig:not-delzant}
  \end{subfigure}

\caption{}
\label{fig:two-triangles}
\end{center}
\end{figure}

\subsection{The Delzant classification}
\label{sec:delz-construction}

Let $(M,\om,F)$ be an $n$-dimensional compact toric integrable system.
Then, as a special case of results of Atiyah~\cite{Atiyah} and Guillemin-Sternberg~\cite{GuSt1982}, $F(M)\subset \R^n$ is a convex set, and by Delzant \cite{De1988}, it is a Delzant polytope (Definition~\ref{def:delzant}).
We say two toric integrable systems $(M,\om,F)$, $(M',\om',F')$ are \emph{isomorphic} if there is a symplectomorphism $\phi\colon M\to M'$ such that $\phi^* F' = F$.
Let $[(M,\om,F)]$ denote the isomorphism class of the toric system $(M,\om,F)$.

Given any Delzant polytope $\De$, it is possible to construct a compact toric integrable system $(M,\om,F)$ such that $F(M)=\De$. 
This construction is by an explicit algorithm involving symplectic reduction by a torus action on $\C^d$ where $d$ is the number of faces of $\De$; we describe this process in Example~\ref{ex:delzant}. 
Due to the existence of such a construction, the map taking a toric integrable system to its moment image is surjective onto the set of Delzant polytopes, and Delzant further showed that, if the toric systems are considered up to isomorphism, it is injective.

\begin{theorem}[Toric classification]\label{thm:toric-classification} 
The map taking $[(M,\om,F)]$ to $F(M)$ is a bijection from isomorphism classes of compact toric integrable systems to Delzant polytopes.
That is, compact toric integrable systems, up to isomorphism, are classified by Delzant polytopes.
\end{theorem}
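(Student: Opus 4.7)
The plan is to verify four claims: well-definedness of the map on isomorphism classes, that its image always lies in the set of Delzant polytopes, surjectivity, and injectivity. Well-definedness is immediate: if $\phi\colon M\to M'$ is a symplectomorphism with $\phi^*F'=F$, then $F(M)=F'(\phi(M))=F'(M')$. For the image being a Delzant polytope, I would invoke the Atiyah--Guillemin--Sternberg convexity theorem to conclude that $F(M)$ is the convex hull of the images of the $\T^n$-fixed points, hence a convex polytope. To check the Delzant conditions at a vertex $v=F(p)$, one applies an equivariant Darboux theorem at the fixed point $p$: the $\T^n$-action is locally modeled on the standard action on $\C^n$ with integer weights $u_1,\dots,u_n\in\Z^n$, which coincide with the primitive inward normals of the facets meeting at $v$. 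Effectiveness of the action forces the $u_i$ to form a $\Z$-basis of $\Z^n$, which is precisely the smoothness condition; rationality and simplicity follow from the same local picture.

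For surjectivity, I would carry out the Delzant reduction construction referenced in Example~\ref{ex:delzant}. Given $\De$ with $d$ facets and primitive inward normals $u_1,\dots,u_d$, the linear map $\R^d\to\R^n$ sending the standard basis vectors to the $u_i$ is surjective over $\Z$ (by smoothness at each vertex), so its kernel defines a subtorus $K\subset\T^d$. The standard Hamiltonian $\T^d$-action on $(\C^d,\om_0)$ restricts to a Hamiltonian $K$-action, and symplectic reduction at the level prescribed by the defining inequalities of $\De$ produces a compact manifold $M_\De$ on which the quotient torus $\T^d/K\cong\T^n$ acts in a Hamiltonian, effective fashion, with momentum image equal to $\De$. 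Checking that $M_\De$ is smooth and that the residual action is effective uses precisely the Delzant conditions on $\De$.

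The hard part, and the main obstacle, is injectivity: I would show that any toric integrable system $(M,\om,F)$ with $F(M)=\De$ is isomorphic to the canonical model $M_\De$ constructed above. The key structural observation is that each fiber of $F$ over a point in the relative interior of a $k$-face of $\De$ is a single $\T^n$-orbit of dimension $k$, with stabilizer an $(n-k)$-dimensional subtorus determined by the face, so the combinatorics of $\De$ completely dictate the orbit-type stratification of $M$. Using the equivariant Darboux theorem one obtains a unique local model near each fixed point, and via the equivariant tubular neighborhood theorem, near the preimage of each open face. The remaining work is to assemble these local equivariant symplectomorphisms into a coherent global isomorphism $M\to M_\De$ lifting the identity on $\De$; I would carry this out by induction on the stratification of $\De$, extending across the preimages of successively higher-dimensional strata and using an equivariant Moser-type argument to absorb the cohomological obstruction at each stage, which vanishes because the open faces of $\De$ are contractible.
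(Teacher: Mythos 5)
Your outline follows the same route the paper takes (and that Delzant's original proof takes): convexity of the image via Atiyah--Guillemin--Sternberg plus a local equivariant Darboux analysis at fixed points to get the Delzant conditions, surjectivity via the symplectic reduction construction of Example~\ref{ex:delzant}, and injectivity by building a global equivariant symplectomorphism from local models. The paper itself does not reprove any of this---it cites~\cite{Atiyah,GuSt1982,De1988} and sketches the construction---but your four-step structure and, in particular, your injectivity sketch (local models from equivariant Darboux/tubular neighborhoods, glued by an equivariant Moser argument over the contractible faces of $\De$) is a faithful summary of Delzant's argument, so the proposal is correct and matches the cited approach.
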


\optional{might want to say something about isomorphism classes here?}

This classification is extremely useful:
it opens up the possibility to  work directly with the polytopes and obtain results about toric integrable systems, and toric systems are a key class of examples in several areas, including integrable systems, Hamiltonian group actions, mirror symmetry, etc.

\optional{remark that mentions isomorphisms could go here - LOOK AT JASON'S PAPER!}

Theorem~\ref{thm:toric-classification} is more than just an abstract bijection though, both directions of the map are relatively accessible to actually compute. Given an integrable system, the corresponding polytope is simply the image of its momentum map, which is furthermore equal to the convex hull of the images of the fixed points of the associated torus actions. Also, given a Delzant polytope $\De$, the corresponding toric integrable system (including the manifold, symplectic form, and torus action) can be obtained via the following algorithm, due to Delzant~\cite{De1988}. The details of the construction are also shown in~\cite[Section 29]{daSi2008}.

\optional{\textbf{ACTUALLY IMPORTANT:} maybe need to deal with the $2\pi$ here?}

\begin{example}[The Delzant construction]\label{ex:delzant}
 Let 
  \begin{equation}\label{eqn:Delzant-intersection}
   \De = \bigcap_{i=1}^d \{x\in \R^n \mid x\cdot u_i \geq \lambda_i\}
  \end{equation}
 be a compact $n$-dimensional Delzant polytope. Note that in our convention the vectors $u_1,\ldots, u_d$ are the inwards pointing normal vectors of the facets of $\De$, and assume that $d$ is minimal (so that none of the half spaces are redundant). Now we define a map 
 \[
  \rho\colon \R^d \to \R^n
 \]
 defined by $\rho(e_i) = u_i$, where $e_1,\ldots,e_d$ is the standard basis of $\R^d$. It turns out that $\rho$ descends to a map from $\R^d/\Z^d$ to $\R^n/\Z^n$. Let $N=\mathrm{ker}(\rho)$, and note that $N$ is a Lie subgroup of $(\R/\Z)^d$.

 Let $\mathfrak{t} := \mathrm{Lie}((\R/\Z)^d)$ and $\mathfrak{n} := \mathrm{Lie}(N)$.
 Consider the standard action of $(\R/\Z)^d$ on $\C^d$, and let $\Phi\colon \C^d\to \mathfrak{t}^*$ be the choice of moment map which has $\Phi(0) = (\lambda_1,\ldots,\lambda_d)$. 
 The inclusion $N\hookrightarrow (\R/\Z)^d$ induces an inclusion map $i\colon \mathfrak{n}\hookrightarrow \mathfrak{t}$ with dual $i^*\colon  \mathfrak{t}^* \rightarrow \mathfrak{n}^*$, and this combined with the moment map $\Phi$ induces a Hamiltonian action of $N$ on $\C^d$ with a moment map $\Phi_N = i^*\circ \Phi$. Now we define a symplectic manifold by symplectic reduction of $\C^d$ by this action of $N$ at the level $\Phi_N=0$. That is, $M_\De = \Phi_N^{-1}(0)/N$ with a symplectic form $\omega_\Delta$ the one guaranteed by the Marsden-Weinstein-Meyer Theorem~\cite{marsden-weinstein,meyer}. It can be checked that the requirements of the Marsden-Weinstein-Meyer Theorem are satisfied, and $(M_\Delta,\om_\De)$ is therefore a smooth symplectic manifold of dimension $2n$.
 Furthermore, the torus $T_\Delta := (\R/\Z)^d/N$ has dimension $n$ and its action descends to an effective Hamiltonian action on $M_\Delta$ with momentum map $\mu_\Delta$. Thus, $(M_\Delta,\om_\Delta,T_\De,\mu_\Delta)$ is a symplectic toric manifold. Finally, it can be verified that $\mu_\Delta (M_\De) = \De$, as desired.
\end{example}

The construction given in Example~\ref{ex:delzant} provides a technique to explicitly construct symplectic toric manifolds from the associated Delzant polygons.
Let's now show how it works on a specific example.

\begin{example}[The Delzant triangle]
Let $\De$ be the triangle in $\R^2$ which has vertices at $(0,0)$, $(\pi,0)$ and $(0,\pi)$. Then it can be written in the form of Equation~\eqref{eqn:Delzant-intersection} by taking 
\[
 u_1 = (1,0),\,u_2 = (0,1),\, u_3 = (-1,-1),\, \lambda = (0,0,-\pi).
\]
Thus, the map $\rho\colon (S^1)^3\to (S^1)^2$ is given by $\rho(a,b,c) = (a-c, b-c)$ which has kernel
\[
 N = \{(a,a,a)\in (S^1)^3 \mid a\in S^1\} \cong S^1.
\]
The action of $N$ on $\C^3$ is Hamiltonian with momentum map $\Phi_N(z_1,z_2,z_3) = -\pi (|z_1|^2+|z_2|^2+|z_3|^2) + \pi$.
Thus,
\[
 M_\De = \Phi_N^{-1}(0)/N = \{z\in\C^3 \mid |z|^2 = 1\}/S^1.
\]
where the $S^1$-action on $\Phi_N^{-1}(0)$ is given by $\alpha \cdot (z_1,z_2,z_3) = (\alpha z_1, \alpha z_2, \alpha z_3)$.
Thus, $M_\Delta$ is $\mathbb{CP}^2$ and it can be seen that it inherits a multiple of the Fubini-Study symplectic form. The quotient $(S^1)^3/N$ is diffeomorphic to a 2-torus, can be identified with $\{(a,b,c)\in(S^1)^3 \mid c=0\}$, and acts on $M_\De$ by rotating the first two coordinates, which is the standard $2$-torus action on $\mathbb{CP}^2$.

\optional{could try to say what the multiple is}
    
\end{example}

\begin{remark}
The toric classification, Theorem~\ref{thm:toric-classification}, is restricted to \emph{compact} toric systems, but Karshon and Lerman~\cite{karshon-lerman} have extended this result to the case of non-compact toric systems by including additional invariants. 
\optional{It would be good to explain this in a few sentences}
\end{remark}

\subsubsection{Looking beyond the toric case}
Let $\mathcal{M}_T$ denote the set of isomorphism classes of toric integrable systems of dimension $2n$  
and let $\mathbf{D}$ denote the set of Delzant polytopes of dimension $n$. %, and let $\aff = \R^n \rtimes GL(n,\Z)$ denote the group of integral affine transformations, which acts on $\mathbf{D}$. 
Then Theorem~\ref{thm:toric-classification} states that the map
\begin{equation}\label{eqn:toric_map}
 [(M,\om,F)] \mapsto F(M)
\end{equation}
produces a bijection from $\mathcal{M}_T$ to $\mathbf{D}$, where $[(M,\om,F)]$ denotes the isomorphism class of the toric system $(M,\om,F)$.
The algorithm described in Example~\ref{ex:delzant} shows how to compute the inverse of this map.

From here, there are two nice things about this situation that we would like to generalize beyond toric systems:
\begin{enumerate}
    \item the map given in Equation~\eqref{eqn:toric_map} is a bijection. This is the statement that toric systems are classified up to isomorphism by their moment image, and allows for using the geometry of polytopes to understand toric systems. 
    \item Delzant's construction shows how to obtain an explicit system from the invariant. That is, the construction produces a known symplectic manifold, in this case obtained as a quotient of $\C^d$, and the Hamiltonian for the torus action is an explicit function on $M_\De$.
\end{enumerate}
In the case of semitoric systems, discussed in Section~\ref{sec:st}, the first item above has been achieved: the semitoric classification theorem of Pelayo and V\~{u} Ng\d{o}c~\cite{PeVN2009,PeVN2011} generalizes the toric classification in dimension four for a generic class of semitoric systems, and this was further generalized to all semitoric systems in~\cite{PPT-nonsimple}. 
We discuss these classifications in Section~\ref{sec:st}. The second item above, obtaining a technique to explicitly and globally write out a semitoric system from the invariants, has proven to be more difficult. 
In Section~\ref{sec:st-families}, we will discuss a technique developed jointly with Y.~Le Floch to obtain explicit examples of semitoric systems with some, but not all, of the invariants prescribed~\cite{LFPal,LFPal-SF2}. The techniques discussed in Section~\ref{sec:st-families} have proven fruitful, and been used to obtain many new explicit examples of semitoric systems, such as in~\cite{HoPa2017,HohMeu}.
In Section~\ref{sec:beyond}, we discuss hypersemitoric and complexity one integrable systems, each of which is a generalization of semitoric integrable systems which has not yet been classified.

\section{Pointwise and local classifications in integrable systems}
\label{sec:local-class}

Let $(M,\om,F=(f_1,\ldots,f_n))$ be an integrable system and let $p\in M$. In this section we will discuss classifications, up to the appropriate isomorphism, of behavior of the system at $p$ (pointwise classifications) and in a neighborhood of $p$ (local classifications). A point $p$ is called \emph{regular} if $\mathrm{rank}(\dd F_p)=n$ and called \emph{singular} otherwise, and these will be the two cases we consider.

\subsection{Regular points: the Liouville-Arnold-Mineur theorem}
\label{sec:actionangle}

Recall Example~\ref{ex:cot-bundle-torus} of a toric integrable system $(T^*\T^n, \om_{T^*\T^n}, \pi_{\R^n})$ on the cotangent bundle of the torus, $T^*\T^n\cong \T^n\times \R^n$, where $\pi_{\R^n}$ is projection onto the $\R^n$ factor.
The following theorem states that the neighborhood of any compact regular fiber can be modeled by a neighborhood of the zero section in this system.

\begin{theorem}[{Liouville-Arnold-Mineur Theorem~\cite{arnold89}}]\label{thm:LAM}
Let $(M,\om,F)$ be an integrable system, let $c\in F(M)$, and let $\Lambda$ be a connected component of $F^{-1}(c)$. If $\Lambda$ is compact and all points in $\Lambda$ are regular points of $F$, then there exists open neighborhoods $U\subseteq M$ of $\Lambda$ and $V\subseteq T^*\T^n$ of the zero section, such that
$(U,\om,F|_U)$ and $(V,\om_{T^*\T^n},(\pi_{\R^n})|_V)$ are isomorphic integrable systems in the following sense: there exists a symplectomorphism $\phi\colon U \to V$ and a local diffeomorphism $g\colon \R^n\to\R^n$ such that $g\circ F|_U = (\pi_{\R^n})|_V \circ \phi$. That is, the following diagram commutes:
\[
\begin{tikzcd}[column sep = tiny]
M \arrow[r,phantom,"\supseteq"]& U \arrow[rrr, "\phi"] \arrow[d, "{F}"'] & & & V  \arrow[d, "\pi_{\R^n}"] &T^*\T^n \arrow[l,phantom,"\subseteq"]\\
& \mathbb{R}^n \arrow[rrr, "g"] & & & \mathbb{R}^n &
\end{tikzcd}
\]
\end{theorem}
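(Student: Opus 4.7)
My plan is to build the symplectomorphism $\phi$ in four stages, producing it in the form $\phi(p) = (\theta(p), I(F(p)))$ where the $I_i$ are the classical action variables and the $\theta_i$ are conjugate angles. The map $g$ will be the action variable map $c \mapsto I(c)$.

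The first step is to show that $\Lambda$ is diffeomorphic to $\T^n$. Since the $f_i$ Poisson commute, the Hamiltonian vector fields $\mathcal{X}_{f_1}, \ldots, \mathcal{X}_{f_n}$ are tangent to every joint level set, pairwise commute (by the claim that $\mathcal{X}_{\{f,g\}} = [\mathcal{X}_f, \mathcal{X}_g]$), and are linearly independent along $\Lambda$ because every point of $\Lambda$ is regular. Compactness of $\Lambda$ makes these vector fields complete on $\Lambda$, so their joint flow defines a smooth $\R^n$-action on $\Lambda$ whose orbits are open, and hence (by connectedness) $\R^n$ acts transitively. The stabilizer of any point is a discrete, cocompact subgroup of $\R^n$, therefore a rank-$n$ lattice $\Lambda^\ast \subset \R^n$, and $\Lambda \cong \R^n / \Lambda^\ast \cong \T^n$.

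The second step is to extend this picture to a neighborhood. Because $\mathrm{rank}(\dd F)=n$ along the compact set $\Lambda$, there is an open neighborhood $U$ of $\Lambda$ on which $F$ is a submersion. A short tubular-neighborhood argument, together with completeness of the $\mathcal{X}_{f_i}$ on nearby compact fibers, shows that after shrinking $U$ every connected component of $F^{-1}(c)$ in $U$ is again a Lagrangian torus (Lagrangian by Claim~\ref{claim:isotroipic-dim}, since the $\mathcal{X}_{f_i}$ span an isotropic subspace of maximal dimension). The period lattice $\Lambda^\ast_c \subset \R^n$ then varies smoothly with $c$, so after possibly further shrinking we can pick a smooth basis $e_1(c), \ldots, e_n(c)$.

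The third and main step is to produce action–angle coordinates. I would define action variables by period integrals
\[
I_i(c) \;=\; \frac{1}{2\pi} \oint_{\gamma_i(c)} \alpha,
\]
where $\alpha$ is a local primitive of $\omega$ on $U$ (arranging $U$ to be diffeomorphic to $F(U)\times \T^n$ so such a primitive exists after perhaps further shrinking) and $\gamma_i(c)$ is a cycle on $F^{-1}(c)\cap U$ representing $e_i(c)$. Stokes' theorem and $\dd\omega=0$ make $I_i$ independent of the choice of representing cycle and of $\alpha$ up to an additive constant; one then checks that $I=(I_1,\ldots,I_n)$ is a local diffeomorphism of $F(U)$ onto an open subset of $\R^n$, that $\{I_i,I_j\}=0$ (since the $I_i$ are functions of $F$), and, crucially, that the Hamiltonian flow of each $I_i$ is $2\pi$-periodic on every fiber. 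With this in hand I fix a smooth local section $\sigma\colon F(U) \to U$ and define angles $\theta_1, \ldots, \theta_n \in \R/2\pi\Z$ by
\[
p \;=\; \psi_{I_1}^{\theta_1} \circ \cdots \circ \psi_{I_n}^{\theta_n}\bigl(\sigma(F(p))\bigr),
\]
where $\psi_{I_i}^t$ denotes time-$t$ flow of $\mathcal{X}_{I_i}$. A direct computation using $\mathcal{X}_{I_i}(\theta_j) = \delta_{ij}$ and $\mathcal{X}_{I_i}(I_j) = 0$, together with the Lagrangian nature of the fibers, then shows $\omega = \sum_i \dd\theta_i \wedge \dd I_i$. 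Setting $\phi(p) = (\theta(p), I(F(p)))$ and $g=I$ gives a symplectomorphism onto a neighborhood of the zero section of $T^*\T^n$ with $g\circ F|_U = \pi_{\R^n}|_V \circ \phi$, as required.

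The main technical obstacle is the third step: verifying that the period integrals $I_i$ are well-defined smooth functions whose Hamiltonian flows are exactly $2\pi$-periodic. This is where the interplay between the closedness of $\omega$, the period lattice, and the freedom in choosing the primitive $\alpha$ is most delicate; once the actions are in hand, the construction of angles and the computation of $\omega$ are essentially formal consequences of the commuting-flow structure already established.
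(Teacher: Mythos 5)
The paper does not prove this theorem; it states it and cites the standard references (\cite{arnold89}, \cite{mineur}, \cite{VNSepe}, and others), so there is no internal proof to compare against. Your outline is the classical Arnold--Liouville argument found in those sources, and the overall structure --- the transitive $\R^n$-action with discrete cocompact isotropy forcing $\Lambda\cong\T^n$, the period integrals as actions, the flows from a section as angles --- is the right one.

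There is one step that fails as stated. You claim that $\mathcal{X}_{I_i}(\theta_j)=\delta_{ij}$, $\mathcal{X}_{I_i}(I_j)=0$, ``together with the Lagrangian nature of the fibers,'' already yield $\omega=\sum_i \dd\theta_i\wedge\dd I_i$. Those relations give $\omega(\partial_{\theta_i},\partial_{\theta_j})=0$ and $\omega(\partial_{\theta_i},\partial_{I_j})=-\delta_{ij}$ (and Lagrangianity of the fibers adds nothing new, since it merely repeats the first of these), but they say nothing about $\omega(\partial_{I_i},\partial_{I_j})$. For a generic choice of the section $\sigma$ these entries do not vanish, leaving $\omega=\sum_i\dd\theta_i\wedge\dd I_i + \tfrac12\sum_{i,j} B_{ij}\,\dd I_i\wedge\dd I_j$. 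Killing the residual term requires $\dd\omega=0$, which forces $B_{ij}$ to depend on $I$ alone, after which one either arranges $\sigma$ to be a \emph{Lagrangian} section or shifts the angle coordinates by suitable functions of $I$ that absorb the two-form $\tfrac12\sum B_{ij}\,\dd I_i\wedge\dd I_j$. Separately, the $2\pi$-periodicity of the flows of $\mathcal{X}_{I_i}$ --- equivalently, that the $\dd I_i$ form an integral basis of the period lattice --- is asserted rather than argued; you flag this yourself, but since this identification (usually established via Stokes' theorem on the cylinder traced out by $\gamma_i$ as the base point varies) is the real content of the theorem, it needs to be carried out rather than merely noted.
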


The above statement is restricted to compact fibers, but the full statement of the theorem is actually somewhat more general, dealing also with non-compact fibers. Many of the systems that we will work with require at least that $F$ is proper, so this version of the theorem is already very helpful.

There exist several proofs of Theorem~\ref{thm:LAM} in the literature. For instance, there is a recent treatment in~\cite{VNSepe}, and there are also proofs, often of slightly different statements, in~\cite{mineur,arnold89,GS-book, bates-sni-actionangle, Dui88,HZ-book}.
There is a generalization of this theorem for singularities in~\cite{zung96}.

In the situation of Theorem~\ref{thm:LAM}, the $p_i$ and $q_i$ coordinates on $T^*\T^n$ induce coordinates in a neighborhood of $F^{-1}(c)$, which are called \emph{action} and \emph{angle} coordinates, respectively.

For the next proposition, let us use action-angle coordinates $(q_1,\ldots,q_n,p_1,\ldots,p_n)$ viewed as functions on a neighborhood of $F^{-1}(c)$ in $M$; technically, these are obtained by pulling back the $p_i$ and $q_i$ from the model $T^*\T^n$ to $M$ via the symplectomorphism $\phi$. The following is also as discussed in~\cite{arnold89}.

\begin{proposition}\label{prop:reg-action}
    Let $c\in F(M)$ be a regular value and denote $\Lambda_c :=F^{-1}(c)$, and suppose that $\Lambda_c$ is compact and connected and $(q_1,\ldots,q_n,p_1,\ldots,p_n)$ are a set of action-angle coordinates around $\Lambda_c$. Let $c'\in F(M)$ be any value sufficiently close to $c$ so that the action-angle coordinates are still defined on $\Lambda_{c'}$. Let $[\gamma_j^{c'}]\in H_1(\Lambda_{c'})$ denote the cycle formed by fixing all angle variables $q_i$ for $i\neq j$ and letting $q_i$ go from $0$ to $2\pi$. Then, for any $x'\in F^{-1}(c')$, the following \emph{action integral} can be used to compute $p_j$:
    \[
      p_j(x') = \frac{1}{2\pi}\oint_{\gamma_j^{c'}}\alpha
    \]
    where $\alpha$ is any primitive of $\omega$ defined in a neighborhood of $\Lambda_c$ which includes $\Lambda_{c'}$. That is, $\omega = \dd \alpha$.
\end{proposition}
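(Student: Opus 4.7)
The plan is to reduce the claim to a direct calculation in the action-angle chart supplied by the Liouville--Arnold--Mineur theorem, and then deal with the choice of primitive using the topology of a tubular neighborhood of $\Lambda_c$.

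First I would apply Theorem~\ref{thm:LAM} to produce a neighborhood $U$ of $\Lambda_c$ symplectomorphic to a neighborhood $V$ of the zero section in $T^*\T^n$. Pulling back the model coordinates gives functions $(q_1,\ldots,q_n,p_1,\ldots,p_n)$ on $U$ in which the symplectic form reads $\omega = \sum_i \dd p_i \wedge \dd q_i$ (as in Example~\ref{ex:cot-bundle-torus}) and each nearby Liouville torus $\Lambda_{c'}$ is a level set $\{p_1 = p_1(x'),\ldots, p_n = p_n(x')\}$ on which all $p_i$ are constant. The cycle $\gamma_j^{c'}$ is then parametrized by letting $q_j$ run from $0$ to $2\pi$ while the remaining $q_i$ and all $p_i$ are held fixed.

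Next I would exhibit the canonical primitive
\[
\alpha_0 \;:=\; \sum_{i=1}^n p_i\, \dd q_i,
\]
which clearly satisfies $\dd \alpha_0 = \omega$ on $U$, and compute directly
\[
\oint_{\gamma_j^{c'}} \alpha_0 \;=\; \int_0^{2\pi} p_j(x')\, \dd q_j \;=\; 2\pi\, p_j(x'),
\]
which proves the formula for this particular primitive. This step is essentially bookkeeping once the coordinates are in place.

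The main obstacle, and the reason the statement allows \emph{any} primitive, is to verify independence of the choice of $\alpha$. Given another primitive $\alpha$ on a neighborhood containing $\Lambda_{c'}$, the difference $\alpha - \alpha_0$ is a closed $1$-form. Since the neighborhood deformation retracts onto $\Lambda_c \cong \T^n$, the cycles $\gamma_j^{c}$ and $\gamma_j^{c'}$ are freely homotopic in $U$ (a homotopy is supplied by the straight-line path in $p$-coordinates between $p(\Lambda_c)$ and $p(x')$), so every closed form has the same period over them. In particular, $\oint_{\gamma_j^{c'}}(\alpha-\alpha_0)$ is a constant $C_j$ independent of $x'$, giving
\[
\frac{1}{2\pi}\oint_{\gamma_j^{c'}} \alpha \;=\; p_j(x') \,+\, \tfrac{1}{2\pi}C_j.
\]
The additive constant $C_j$ corresponds precisely to the shift freedom in the choice of action coordinates (changing the origin of $p_j$ on the $\R^n$-factor of $T^*\T^n$), and so, absorbed into the identification of $p_j$, the formula holds for any primitive. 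I would close by remarking that this ambiguity is unavoidable and intrinsic to the construction of action coordinates, consistent with the local nature of Theorem~\ref{thm:LAM}.
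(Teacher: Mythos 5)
Your proposal follows the same route as the paper's proof—compute in the $T^*\T^n$ model with the canonical primitive $\alpha_0 = \sum_i p_i\,\dd q_i$, then address the choice of primitive—but you handle the last step more carefully, and in fact correctly, where the paper's proof has a genuine gap. The paper asserts that any two primitives of $\omega$ "would differ from $\alpha$ by an exact form," and uses this to conclude the integral is independent of the choice. This is false: two primitives of a closed form differ only by a \emph{closed} $1$-form, and since the neighborhood $U$ deformation retracts onto $\Lambda_c \cong \T^n$, we have $H^1(U) \cong \R^n \neq 0$, so closed $1$-forms can have nonzero periods. A concrete counterexample is $\alpha_0 + \dd q_1$, which is a perfectly good primitive of $\omega$ but whose period over $\gamma_1^{c'}$ differs from that of $\alpha_0$ by $2\pi$. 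Your observation that the difference of primitives contributes a \emph{constant} $C_j$ (constant because the cycles $\gamma_j^{c}$ and $\gamma_j^{c'}$ are freely homotopic in $U$) is the correct reading, and your remark that $C_j$ reflects the translation freedom inherent in the choice of action coordinates is exactly the right way to make sense of the statement: for an arbitrary primitive $\alpha$, the integral $\frac{1}{2\pi}\oint_{\gamma_j^{c'}}\alpha$ computes $p_j(x')$ up to an additive constant, i.e.\ it yields \emph{a} valid choice of action coordinate rather than the specific $p_j$ fixed in advance. In short: same overall strategy as the paper, but your treatment of the independence-of-primitive step repairs an error in the published argument.
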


\begin{proof}
 In the model $T^*\T^n$ a primitive of $\omega$ is given by $\alpha = \sum_i p_i \dd q_i$. Since $p_j$ is constant along $\gamma_j$, we compute
 \[
  \oint_{\gamma_j^{c'}} \alpha = \int_0^{2\pi}p_j \dd q_j = 2\pi p_j (x').
 \]
 Thus, the equation holds for any primitive of $\omega$, since any such one-form would differ from $\alpha$ by an exact form.
 Since the desired formula holds in the model, pulling back by $\phi$ we obtain that the formula also holds locally in $M$.
\end{proof}

In Proposition~\ref{prop:reg-action} the cycles $\gamma_j$ are determined by the action-angle coordinates: in the theorem we describe them in terms of the angle coordinates, but they can also be described by following the flow of $\mathcal{X}_{p_j}$. This may make the proposition seem not very useful, since we must first know the action-angle coordinates before we can compute them, but an important implication of this proposition is that \emph{there exist} cycles $\gamma_1^c,\ldots,\gamma_n^c$ which are a basis of $\Lambda_c$ and for which the action variables can be computed by integrating over these cycles. In fact, any such choice of cycles will produce a choice of action coordinates.

Action variables are an important invariant of integrable systems, and in Section~\ref{sec:affine} we will see that they produce what is called an integral affine structure on the base of a fibration induced by $F$.
In Section~\ref{sec:construct-labels} we will discuss situations in which such action integrals are generalized to certain singular points, called focus-focus points, though the actions obtained in that case are singular (they blow up at the focus-focus point, and also exhibit monodromy).
Nevertheless, V\~{u} Ng\d{o}c~\cite{VN2003} was able to use these singular actions to obtain a semilocal invariant around focus-focus singularities.

\optional{cite open questions paper bolsinov section}

\subsection{Singularities in integrable systems}
\label{subsec:singularities}

The structure near singular points is extremely rich compared with the structure near regular points. In this section, we will actually only deal with what are called \emph{non-degenerate} singular points, which is a generic condition similar to Morse non-degeneracy, and in this situation there are several cases. We will start with a pointwise classification (stated in terms of the Hessian of $F$) and then discuss the local version. We will see that these classifications are similar to the classification of critical points of Morse functions by their Morse index: non-degenerate singular points can be written as products of only four different types of factors (elliptic, hyperbolic, focus-focus, and regular).

For more details on the following, see the discussion in the book by Bolsinov and Fomenko~\cite{Bolsinov-Fomenko}.
Let $p$ be a singular point, then $\mathrm{rank}(\mathrm{d}F(p))$ is called the \emph{rank of $p$} and denoted by $\text{rank}(p)$.
The Hessians of $f_1,\ldots,f_n$ at $p$ lie in the space of quadratic forms on $T_pM$, denoted here by $Q(T_pM)$, which can be equipped with a Lie algebra structure isomorphic to $\mathfrak{sp}(2n,\R)$.

To start, suppose that $p$ is a singular point of rank zero. Then, since $f_1,\ldots,f_n$ Poisson-commute, their Hessians commute, and thus span an abelian subalgebra of $Q(T_pM)$.
Then $p$ is called \emph{non-degenerate} if the span of the Hessians is a Cartan subalgebra\footnote{A Cartan subalgebra of a Lie algebra is a maximal abelian subalgebra of semisimple elements.} of $Q(T_pM)$.
If $p$ is a singular point with $r:=\mathrm{rank}(p)>0$, then there is also a notion of non-degeneracy: roughly, the Hessians of $f_1,\ldots,f_n$ descend to the quotient of $T_pM$ by a space related to the non-singular part of the momentum map, and taking the span of the images of the Hessians in this space yields a subalgebra of the space of quadratic forms on $\R^{2(n-r)}$. As in the rank zero case, we then say that $p$ is non-degenerate if this subalgebra is Cartan.
For details, see~\cite{Bolsinov-Fomenko}, or for a quick overview in the case of four dimensions see~\cite[Section 2.1]{LFPal-SF2}.

Williamson~\cite{Williamson} classified all Cartan subalgebras of $\mathfrak{sp}(2n,\R)\cong Q(T_pM)$, which in turn implies a classification of non-degenerate singular points.
Let $\om_{\R^{2n}}$ denote the standard symplectic form on $\R^{2n}$.

\begin{theorem}\label{thm:williamson}
Let $Q(2r,\R)$ denote the set of quadratic forms on $\R^{2r}$, with Lie algebra structure isomorphic to $\mathfrak{sp}(2r,\R)$. 
Then for any Cartan subalgebra $\mathcal{C}\subset Q(2r,\R)$ there exist coordinates $(x_1,\ldots,x_r,y_1,\ldots,y_r)$ such that $\omega_{\R^{2r}} = \sum_{i=1}^r \mathrm{d}x_i \wedge \mathrm{d}y_i$, and such that $\mathcal{C}$ has a basis of the form $q_1,\ldots,q_r$ where for each $i\in\{1,\ldots, r\}$ one of the following holds:
\begin{enumerate}
    \item $q_i = x_i^2+y_i^2$ (elliptic block);
    \item $q_i = x_i y_i$ (hyperbolic block)
    \item $q_i = x_i y_{i+1}- x_{i+1}y_i$ and $q_{i+1} = x_i y_i+ x_{i+1} y_{i+1}$ (focus-focus block).
\end{enumerate}
\end{theorem}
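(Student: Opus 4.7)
The plan is to reinterpret the statement as a classification of commuting families of semisimple symplectic endomorphisms of $\R^{2r}$, then use simultaneous diagonalization over $\C$ together with the reality and symplectic constraints to force the three-case structure.

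First, I would set up the Lie algebra isomorphism $Q(2r,\R)\cong \mathfrak{sp}(2r,\R)$ explicitly. Using $\omega_{\R^{2r}}$, each quadratic form $q$ with polarization $B_q$ corresponds to the unique linear map $A_q$ characterized by $B_q(v,w)=\omega_{\R^{2r}}(A_qv,w)$; one checks this sends the Poisson bracket of quadratic forms to the commutator of endomorphisms. Under this identification, $\mathcal{C}$ becomes a maximal abelian subalgebra of $\mathfrak{sp}(2r,\R)$ consisting of semisimple elements, and the problem becomes: find a symplectic basis of $\R^{2r}$ in which $\mathcal{C}$ is spanned by the three standard types of operators.

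Second, I would pick a regular element $A\in\mathcal{C}$, i.e.\ one whose centralizer in $\mathfrak{sp}(2r,\R)$ has minimal dimension; such an $A$ exists by a standard genericity argument and, by maximality, the full centralizer of $A$ in $\mathfrak{sp}(2r,\R)$ equals $\mathcal{C}$. Since $A\in\mathfrak{sp}$, its $\C$-spectrum is stable under $\lambda\mapsto -\lambda$, and since $A$ is real, it is also stable under complex conjugation. Hence the nonzero eigenvalues of $A$ form either real pairs $\{\mu,-\mu\}$, purely imaginary pairs $\{i\nu,-i\nu\}$, or complex quadruples $\{\pm\alpha\pm i\beta\}$. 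Grouping eigenspaces accordingly yields real $A$-invariant subspaces $W_1\oplus\cdots\oplus W_k$ of $\R^{2r}$ of dimension $2$ or $4$. One verifies that each $W_j$ is symplectic (the pairing between $V_\lambda$ and $V_{-\lambda}$ is nondegenerate precisely because these eigenspaces appear together) and, since every element of $\mathcal{C}$ commutes with $A$ and hence preserves its spectral projections, the whole subalgebra $\mathcal{C}$ respects this decomposition.

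Third, on each block I would construct explicit Darboux coordinates in which $A|_{W_j}$ takes the desired Williamson form, and then show that the centralizer of $A|_{W_j}$ in $\mathfrak{sp}(W_j)$ is $1$-dimensional on $2$-dimensional blocks and $2$-dimensional on focus-focus blocks, so that the normal-form generators $q_i$ in the theorem span $\mathcal{C}|_{W_j}$ by maximality. Concretely: on a hyperbolic block, choose eigenvectors $e_\pm$ of $A$ for $\pm\mu$ with $\omega(e_+,e_-)=1$; on an elliptic block, use the real and imaginary parts of a complex eigenvector for $i\nu$ (suitably normalized); on a focus-focus block, pick a basis adapted to both the symplectic form and the complex structure induced by $\alpha+i\beta$, so that the two commuting generators $x_iy_{i+1}-x_{i+1}y_i$ and $x_iy_i+x_{i+1}y_{i+1}$ appear as the real and imaginary parts of a single complex Hamiltonian. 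A small rescaling by diagonal symplectic transformations absorbs the eigenvalue parameters, producing exactly the forms stated in the theorem.

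The main obstacle will be the focus-focus case: one must rule out any further decomposition of a $4$-dimensional complex-eigenvalue block into two $2$-dimensional symplectic subspaces. This is where the reality constraint genuinely interacts with the symplectic pairing, since over $\C$ one could split $W_j$ into eigenlines, but no $2$-dimensional real subspace of $W_j$ is simultaneously symplectic and $A$-invariant when $\alpha,\beta\neq 0$. This forces the centralizer on $W_j$ to be $2$-dimensional in a way that is not a direct sum of $1$-dimensional centralizers, producing the third Williamson normal form; handling this and verifying the dimension bookkeeping $e+h+2f=r$ is the technical crux.
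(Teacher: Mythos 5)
The paper does not actually prove Theorem~\ref{thm:williamson}; it cites Williamson~\cite{Williamson} and moves on. So there is no in-paper argument to compare against. Your sketch, however, is the standard modern route to Williamson's classification and is essentially correct: transport the problem to $\mathfrak{sp}(2r,\R)$ via $B_q(v,w)=\omega(A_q v,w)$, pick a regular $A\in\mathcal{C}$ so that $\mathcal{C}=Z_{\mathfrak{sp}}(A)$, and observe that $\mathrm{Spec}_\C(A)$ is stable under both $\lambda\mapsto -\lambda$ (since $A\in\mathfrak{sp}$) and $\lambda\mapsto\bar\lambda$ (since $A$ is real), giving the three orbit types; the pairing $\omega$ is nondegenerate on $V_\lambda\times V_{-\lambda}$ and vanishes otherwise, so the associated $2$- and $4$-dimensional real subspaces are symplectic and $\mathcal{C}$-invariant. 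Your diagnosis of the focus-focus case as the crux is exactly right: the only $2$-dimensional real $A$-invariant subspaces of a quadruple block are $\mathrm{Re}(V_\lambda\oplus V_{\bar\lambda})$ and $\mathrm{Re}(V_{-\lambda}\oplus V_{-\bar\lambda})$, and these are isotropic because $\lambda+\bar\lambda = 2\alpha\neq 0$, so the block is symplectically irreducible and its centralizer, identified with multiplication by $\C$ on $W_j\cong\C^2$, is $2$-dimensional over $\R$ without being a sum of two $1$-dimensional pieces.

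Several steps are asserted but left to the reader and would be the bulk of a written-out proof: that $q\mapsto A_q$ is a Lie algebra isomorphism onto $\mathfrak{sp}(2r,\R)$ (this is the usual statement that Hamiltonian vector fields of homogeneous quadratics are linear symplectic endomorphisms); that regular elements exist and are dense in $\mathcal{C}$ (the non-regular locus is a finite union of root hyperplanes); that for regular $A$ the restriction map identifies $\mathcal{C}$ with $\bigoplus_j Z_{\mathfrak{sp}(W_j)}(A|_{W_j})$ because the eigenvalues of $A$ on distinct blocks are disjoint, so $\mathcal{C}|_{W_j}$ is the full Cartan of $\mathfrak{sp}(W_j)$ rather than a proper subalgebra; and the explicit Darboux normalizations (including the sign bookkeeping in the elliptic case, where one replaces $\nu(x^2+y^2)$ by $x^2+y^2$ using that the two span the same line in $\mathcal{C}|_{W_j}$). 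None of these are genuine gaps, but they need to be filled in for a complete proof, and the focus-focus Darboux construction in particular takes some care to make the two commuting generators come out in the clean form $x_iy_{i+1}-x_{i+1}y_i$ and $x_iy_i+x_{i+1}y_{i+1}$.
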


Let $\mathbf{k}_e$ denote the number of elliptic blocks, $\mathbf{k}_h$ denote the number of hyperbolic blocks, and $\mathbf{k}_{ff}$ denote the number of focus-focus blocks, and notice that $\mathbf{k}_e + \mathbf{k}_h + 2\mathbf{k}_{ff} = r$.
We consider a singular point of rank $r$ in a $2n$-dimensional integrable system to have $n-r$ \emph{regular blocks}.

\begin{definition}\label{def:will-type}
Let $p\in M$ be a non-degenerate singular point of an $n$-dimensional integrable system $(M,\om,F)$ with $\mathrm{rank}(p)=r$, and let $\mathcal{C}\subset Q(2r,\R)$ denote the Cartan subalgebra associated with $p$. 
The \emph{Williamson type} of $p$ is the triple $(\mathbf{k}_e,\mathbf{k}_h, \mathbf{k}_{ff})$ associated to $\mathcal{C}$ by Theorem~\ref{thm:williamson}.
\end{definition}

Singular points are often discussed by listing the blocks. For instance, an elliptic-elliptic-regular point would be a rank 2 singular point in a 6-dimensional integrable system for which the corresponding Cartan subalgebra has Williamson type $(\mathbf{k}_e,\mathbf{k}_h, \mathbf{k}_{ff}) = (2,0,0)$.

Notice that the Williamson classification of a singular point $p$ is a completely pointwise notion, only concerned with the Hessians acting on $T_pM$. 
It is natural to wonder if there is an analogue of the Morse lemma in this case, i.e.~a local version of this classification. 
That is, do there exist symplectic coordinates on the \emph{manifold $M$} (as opposed to $T_pM$) for which the fibration of the integrable system near $p$ can be put in a normal form depending on the Williamson type of $p$?

To answer this question in the affirmative, we have the following statement:

\begin{theorem}[Eliasson normal form~\cite{Eliasson-thesis,Eli90,miranda-zung}]\label{thm:eliasson}
    Let $p\in M$ be a non-degenerate singular point of the $2n$-dimensional integrable system $(M,\om,F)$. 
    Then there exist local coordinates $(x_1,\ldots, x_n,y_1,\ldots,y_n)$ on an open neighborhood $U$ of $p$ and a map $Q = (q_1,\ldots,q_n)\colon U\to\R^n$ whose components $q_i$ each satisfy one of the following:
    \begin{enumerate}
    \item $q_i = x_i^2+y_i^2$ (elliptic block);
    \item $q_i = x_i y_i$ (hyperbolic block)
    \item $q_i = x_i y_{i+1}- x_{i+1}y_i$ and $q_{i+1} = x_i y_i+ x_{i+1} y_{i+1}$ (focus-focus block),
    \item $q_i = y_i$ (regular block),
\end{enumerate}
such that $p$ corresponds to $(x,y)=(0,0)$, and $\{q_i,f_j\}=0$ for all $i,j$. Moreover, if none of the $q_i$ are hyperbolic, there exists a local diffeomorphism $g\colon \R^n\to\R^n$ with $g(F(p))=0$ and such that $g\circ F|_U =  Q$.
\end{theorem}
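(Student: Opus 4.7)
The plan is to bootstrap from the pointwise Williamson classification (Theorem~\ref{thm:williamson}) to a genuine local normal form on the manifold, via a Moser-type deformation argument. By Darboux's theorem, I may work in coordinates on $\R^{2n}$ with the standard symplectic form $\omega_0$ and $p$ at the origin. A standard preliminary reduction separates off the regular directions: after a linear change of basis among the $f_i$'s, those components with nonzero differentials at $p$ can be rectified to regular blocks $q_i = y_i$ via a symplectic straightening (essentially the Darboux-Carath\'eodory theorem). This reduces the problem to the rank-zero case on the symplectic slice transverse to their joint orbit, where every remaining component has vanishing differential at $p$.

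Next, the Hessians of the remaining components at the origin span a Cartan subalgebra of the space of quadratic forms on the slice, by the non-degeneracy hypothesis. Applying Williamson yields model quadratic functions in the standard elliptic, hyperbolic, and focus-focus block form. After a further invertible linear change within the Cartan subalgebra (which preserves Poisson-commutation and lifts back to the original system), I obtain the full model map $Q = (q_1,\ldots,q_n)$ with the four block types, and may assume $f_i = q_i + O(\|(x,y)\|^3)$ near the origin.

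The heart of the proof is to construct a local symplectomorphism $\Phi$ of $(\R^{2n},\omega_0)$ fixing the origin such that $\Phi^* f_i$ Poisson-commutes with every $q_j$ and retains the Williamson normal form, so that $F$ and $Q$ generate the same singular foliation germ at $p$. The natural approach is a path method: interpolate via $F_t = (1-t)Q + tF$ and seek a time-dependent Hamiltonian vector field $\mathcal{X}_{H_t}$ whose flow pulls $F_t$ back to $Q$. Differentiating reduces the problem to a family of cohomological equations $\{H_t, q_i\} = -S_{t,i}$ with inhomogeneities coming from the higher-order remainder, and solving them requires controlling the algebra of germs annihilated by the Hamiltonian flows of $q_1,\ldots,q_n$. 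The main obstacle, and the reason this theorem has received successive refinements over decades, is that while formal power-series solutions to these equations are elementary to produce, passing to smooth solutions in the presence of possibly flat remainders is delicate. Eliasson~\cite{Eliasson-thesis,Eli90} accomplishes this via a Stone-Weierstrass-type division lemma for centralizers, while Miranda-Zung~\cite{miranda-zung} give an alternative proof using averaging over the compact torus action generated by the elliptic and focus-focus blocks combined with an equivariant Darboux theorem.

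Finally, for the moreover clause, when no hyperbolic blocks are present the joint Hamiltonian flow of $q_1,\ldots,q_n$ has compact isotropy at $p$, and a crucial consequence is that the common level sets of $Q$ near $p$ are connected. The Poisson-commutation $\{q_i, f_j\}=0$ then forces each $f_j$ to be constant along the local fibers of $Q$, so there exists a smooth germ $h\colon(\R^n,0) \to (\R^n, F(p))$ with $F = h \circ Q$ on a neighborhood of $p$. Since the normalization in the second paragraph made $f_i$ agree with $q_i$ up to cubic terms, $dh(0)$ is the identity, so $h$ is a local diffeomorphism, and $g := h^{-1}$ yields the required identity $g \circ F|_U = Q$ with $g(F(p))=0$. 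When a hyperbolic block is present, local fibers of $Q$ are disconnected (cf.~the hyperbolic model in Example~\ref{ex:non-deg-examples}), so $F$ and $Q$ can differ by a permutation of sheets, and only the foliation-level identification survives.
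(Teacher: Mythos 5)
The paper does not actually prove Theorem~\ref{thm:eliasson}; it is stated as a cited result, and immediately after the statement the author explicitly cautions that, to their knowledge, no complete proof exists in the literature for the general smooth case. The cases that are rigorously established are: completely elliptic in all dimensions, focus-focus in dimension four, and all Williamson types in dimension two. Your sketch, which routes through Darboux--Carath\'eodory for the regular blocks, Williamson for the Hessian, and a Moser-type path method reducing to cohomological equations $\{H_t,q_i\}=-S_{t,i}$, is the standard and correct outline of the strategy; and you are right that the genuinely hard step is passing from formal to smooth solutions in the presence of flat remainders. But you then defer precisely that step to the same literature the paper flags as incomplete, so you have not closed the gap --- you have identified where it is. That is worth knowing: the paper's remark after the theorem suggests a pragmatic workaround (take the existence of the local normal form as the definition of non-degeneracy), rather than attempting a proof.

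Two concrete corrections. First, you describe Miranda--Zung~\cite{miranda-zung} as giving ``an alternative proof using averaging''; the paper states the opposite relationship --- Miranda--Zung prove an \emph{equivariant} version, and their proof uses Theorem~\ref{thm:eliasson} as an input, so it cannot serve as an independent proof of it. Second, in your argument for the ``moreover'' clause, the assertion that $\{q_i,f_j\}=0$ forces $f_j$ to be constant on the connected level sets of $Q$ needs one more sentence: the Hamiltonian flows of the $q_i$ are tangent to, and span the tangent space of, the regular part of each $Q$-fiber, so the joint orbits are open and dense in each connected fiber, and constancy on the whole fiber then follows by continuity. As stated, constancy along flows does not immediately give constancy on the level set, since the orbit through the singular point itself is lower-dimensional. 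The contrast with the hyperbolic block, where a level set $\{x_iy_i = c\}$ has two components that the flow does not connect, is exactly the right thing to point out.
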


To my knowledge, there does not exist a complete proof of this theorem anywhere in the literature. 
It was originally proved only in the analytic case by Vey~\cite{Vey}, and since then various special cases have been proved in the smooth case: completely elliptic in all dimensions~\cite{DufMol,Eli90}, focus-focus in dimension four~\cite{VNWac,Cha}, and in all cases (hyperbolic and elliptic) in dimension two~\cite{ColVey}.
There is also an equivariant version~\cite{miranda-zung}, which is proved using Theorem~\ref{thm:eliasson}.
This is also discussed around Theorem 2.1 in \cite{LFVN} and in \cite[Remark 4.16]{VNSepe}.

\begin{remark}
    Though there is not a complete proof of Theorem~\ref{thm:eliasson} in the smooth category appearing anywhere in the literature, in most cases there is a straightforward way to avoid explicit dependence on this theorem while remaining rigorous. Namely, one can take the existence of the local coordinates of Theorem~\ref{thm:eliasson} to be the \emph{definition} of what it means for a point to be non-degenerate, instead of using the definition presented above about Cartan subalgebras. 
    While this workaround is somewhat unsatisfactory, it provides a rigorous and practical way to proceed until a complete proof of Theorem~\ref{thm:eliasson} becomes available.    
\end{remark}

Recall that in Example~\ref{ex:non-deg-examples} we discussed each of these local models as examples of integrable systems on either $\R^2$ or $\R^4$. Theorem~\ref{thm:eliasson} says that all non-degenerate singular points are locally isomorphic to one of those models.

\begin{example}[Morse functions]
    Let $\Sigma$ be any surface equipped with an area form $\om$. Recall that area forms and symplectic forms are equivalent on surfaces. Let $f\colon M\to \R$ be any Morse function. Then $(M,\om,f)$ is a non-degenerate integrable system. The singularities of this integrable system are the critical points of $f$. The critical points with Morse index 0 or 2 are elliptic singularities, and those with Morse index 1 are hyperbolic singularities.
\end{example}

\begin{example}[Products]
   Suppose that $(M_i,\om_i,F_i)$ is a non-degenerate integrable system of dimension $n_i$ for $i=1,2$.
   Let $F_{12}\colon M_1\times M_2\to\R^{n_1+n_2}$ be given by $F_{12}(p_1,p_2) = (F_1(p_1),F_2(p_2))$. Then 
   \[
    (M_1\times M_2, \om_1 \oplus \om_2,F_{12})
   \]
   is also a non-degenerate integrable system.
\end{example}

\begin{example}
Suppose that $(M,\om,F)$ is a toric integrable system, in the sense of Definition~\ref{def:toric}.  Then all singular points are elliptic. That is, each singular point of rank $k$ has $k$ regular blocks and $n-k$ elliptic blocks.
\end{example}

\begin{remark}\label{rmk:elliptic-actions}
Singular points that have only elliptic and regular blocks, like those that appear in toric integrable systems, also possess action integrals. That is, the components of the function $g$ from Theorem~\ref{thm:eliasson} can be computed as integrals over cycles, as in Proposition~\ref{prop:reg-action}. See~\cite{miranda-zung}.
\end{remark}

\subsection{Integral affine structures and integrable systems}
\label{sec:affine}

The action-angle coordinates of Section~\ref{sec:actionangle} give a semi-local understanding of what is happening around regular points of the system, but they fall short of being a global invariant. There can be obstructions to obtaining global action-angle coordinates for all regular points simultaneously; this was investigated in the seminal paper by Duistermaat~\cite{Dui88}, in which he introduced the concept of the integral affine structure induced by an integrable system. Informally, this encodes the way that the local action-angle coordinates, and specifically the actions, fit together.

Let $(M,\om,F)$ be a $2n$-dimensional integrable system,
and define an equivalence relation on $M$ by $p\sim q$ if and only if $p$ and $q$ are in the same connected component of the same fiber of $F$.
Then we define \emph{the base of the singular Lagrangian fibration induced by $F$} to be $B=M/\sim$.
This terminology comes from the fact that the regular fibers of $M\to M/\sim$ are Lagrangian submanifolds, which quickly follows from a dimension count and the fact that $(\mathcal{X}_{f_1})_p,\ldots,(\mathcal{X}_{f_n})_p$ span the tangent plane of the fiber containing $p$ at $p$, and $\om(\mathcal{X}_{f_i},\mathcal{X}_{f_j}) = \{f_i,f_j\}=0$. The other fibers are either too low of dimension to be Lagrangian (i.e.~they are isotropic) or they are not smooth submanifolds, or both.

Let $B_r\subset B$ denote the set of those points $[p]\in B$ such that $q$ is a regular point of $F$ for all $q\in [p]$, and we call the points in $B_r$ \emph{regular fibers}.
In the case that the fibers of $F$ are compact, the set $B_r$ inherits what is called an integral affine structure.
Let
$\aff = \R^n \rtimes GL(n,\Z)$ denote the group of integral affine transformations.

\begin{definition}
    An \emph{integral affine structure} on an $n$-dimensional manifold $N$ is a maximal atlas $\mathcal{A} = \{(U_\alpha,\phi_\alpha)\}_{\alpha \in \mathcal{I}}$ on $N$ such that for each $\alpha,\alpha'\in\mathcal{I}$, the transition map $\phi_\alpha \circ \phi_{\alpha'}^{-1}$ is equal to a restriction of an element of $ \aff$ to its domain.
    The pair $(N,\mathcal{A})$ is called an \emph{integral affine manifold}.
\end{definition}
In the case that the fibers of $F$ are connected, we may associate $B$ with $F(M)$, and $B_r$ with the regular values in the image of $F$. In this case, it is often said that the regular values of $F$ inherit an integral affine structure from the integrable system.
This is the scenario in several important classes of integrable systems, such as toric systems (see Section~\ref{sec:toric}) and semitoric systems (see Section~\ref{sec:st}).
On the other hand, many important examples of integrable systems have disconnected fibers, such as many systems which include singular points with hyperbolic types, and in particular the hypersemitoric systems introduced and studied in~\cite{HP-extend} (which we discuss in Section~\ref{sec:hst}).

%On an integral affine manifold, parallel transport of vector fields along a curve is well-defined. In fact, this is true for any manifold with an affine atlas, and the resulting connection is called the \emph{affine connection}.
If $(N,\mathcal{A})$ is an integral affine manifold, then for any loop $\gamma\colon S^1 \to N$ and point $p\in \gamma(S^1)$, the parallel transport of any vector $X\in T_pM$ along the loop yields another vector in $T_pM$, related to the original one by an integral affine transformation. 
This induces a map from $T_pM$ to itself, and for each choice of coordinates around $p$ we obtain an element of $\aff$.
This construction only depends on the homotopy class of $\gamma$. Let $\pi_1(N,p)$ denote the fundamental group with base point $p$.

\begin{definition}\label{def:monodromy}
    Let $(N,\mathcal{A})$ be an integral affine manifold and $p\in N$. 
    For each integral affine chart around $p$, the \emph{monodromy map at $p$} is the map
    \[
     \mathfrak{M}_p\colon \pi_1(N,p) \to \aff
    \]
    described above.
\end{definition}

The monodromy is an important property of an integral affine structure, since non-trivial monodromy obstructs the existence of global action-angle variables~\cite{Dui88}.
We will see that the natural integral affine structure on the base of a semitoric integrable system has nontrivial monodromy due to the existence of focus-focus points~\cite{Zou,Mat96,Zung97} (and this implies the existence of quantum monodromy in the associated quantum integrable system \cite{cushman-duist-pendulum,SaZh1999}).
Monodromy is explored more in~\cite{MaBrEf-monodromy}.
The following simple observation will be important for us later: if $N$ is simply connected, then the monodromy of any affine structure on $N$ is trivial.

Note that $\R^n$ has a natural integral-affine structure: consider the global chart $U = \R^n$ with $\phi\colon \R^n\to\R^n$ where $\phi$ is the identity map, and take the atlas to be all charts compatible with this one. 
That is, the standard integral affine structure on $\R^n$ is given by the atlas \[\mathcal{A}_{\R^n} = \{(U,\phi) \mid U\subset \R^n \text{ open and there exists } \psi \in \aff \text{ such that }\phi=\psi|_U\}.\]
Furthermore, any submanifold of $\R^n$ inherits an affine structure from $\mathcal{A}_{\R^n}$. By using the action-angle coordinates to compute the affine structure of a toric system, one can prove the following:

\begin{proposition}\label{prop:toric-integralaffine}
 Let $(M,\om,F)$ be a compact toric integrable system, and let $B_r \subset F(M) \subset \R^n$. Then the integral affine structure that $B_r$ inherits as a subset of $\R^n$ and the one that it inherits as the base of the integrable system $(M,\om,F)$ are equal.
\end{proposition}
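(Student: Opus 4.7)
The plan is to show that near any regular value $c \in B_r$, the identity chart coming from the inclusion $B_r \subset \R^n$ and an action-coordinate chart coming from Theorem~\ref{thm:LAM} differ by an element of $\aff$, which forces the two integral affine structures to coincide.

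First I would fix a regular value $c$ and apply Theorem~\ref{thm:LAM} on a neighborhood $U$ of $\Lambda = F^{-1}(c)$ to obtain action-angle coordinates $(q_1,\ldots,q_n,p_1,\ldots,p_n)$ together with a local diffeomorphism $g\colon \R^n \to \R^n$ satisfying $g \circ F|_U = (\pi_{\R^n})|_V \circ \phi = (p_1,\ldots,p_n)$. The integral affine structure induced by the integrable system uses $(p_1,\ldots,p_n)$ as a chart on (the image in $B_r$ of) $U$; the integral affine structure from $\R^n$ uses $F|_U = (f_1,\ldots,f_n)$ (which, since fibers of a compact toric system are connected by Atiyah--Guillemin--Sternberg, descends to a chart on the corresponding open set of $B_r$). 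So it suffices to prove that $g$ is, on its domain, the restriction of an element of $\aff$.

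Next, I would compute $\mathcal{X}_{f_i}$ in the action-angle coordinates. Writing $f_i = g_i(p_1,\ldots,p_n)$, the chain rule gives
\[
\mathcal{X}_{f_i} \;=\; \sum_{k=1}^n \frac{\partial g_i}{\partial p_k}\,\mathcal{X}_{p_k} \;=\; \sum_{k=1}^n \frac{\partial g_i}{\partial p_k}\,\frac{\partial}{\partial q_k},
\]
since in the model $T^*\T^n$ the Hamiltonian vector field of $p_k$ is $\partial/\partial q_k$. In a compact toric system the flow of each $\mathcal{X}_{f_i}$ is $2\pi$-periodic (after the standard normalization identifying the Lie algebra of $\T^n$ with $\R^n$ via $\T^n = \R^n/2\pi\Z^n$), and the flow of each $\partial/\partial q_k$ is likewise $2\pi$-periodic on the torus fibers. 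A constant-coefficient vector field $\sum a_k \partial/\partial q_k$ on $\T^n$ has orbits that close up at time $2\pi$ if and only if every $a_k \in \Z$. Applied fiberwise along $\Lambda$, this forces $\partial g_i/\partial p_k \in \Z$. Since these partial derivatives are continuous integer-valued functions on a connected neighborhood, they are constants. Hence $g_i(p) = \sum_k A_{ik} p_k + c_i$ for integers $A_{ik}$ and real constants $c_i$, so $g$ is the restriction of an integral affine map with linear part $A \in M_n(\Z)$.

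Finally, I would verify that $A \in GL(n,\Z)$. Since $g$ is a local diffeomorphism, $A$ is invertible over $\R$. Running the symmetric argument (expressing $p_j$ as a function of the $f_i$ using that the $\mathcal{X}_{p_j}$ have $2\pi$-periodic flow and the $\mathcal{X}_{f_i}$ generate the effective toric action) shows that $A^{-1}$ also has integer entries, whence $A \in GL(n,\Z)$ and $g \in \aff$. I expect the main (small) obstacle to be bookkeeping with the $2\pi$ normalization and carefully invoking effectiveness of the torus action to get $A^{-1} \in M_n(\Z)$; once the periodicity-forces-integrality observation is in place the rest of the argument is essentially formal. The equality of the two atlases on $B_r$ then follows by covering $B_r$ with such neighborhoods and noting that transitions between identity charts and action charts all lie in $\aff$.
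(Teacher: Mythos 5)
The paper does not actually include a proof of this proposition (it is marked with the author's \texttt{\textbackslash optional} macro, which discards its argument), so there is no in-text argument to compare against; I will assess your proposal on its own terms.

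Your argument is correct and is the natural one: in a toric system the momentum map components $f_i$ are themselves action coordinates, because their Hamiltonian flows are $2\pi$-periodic, and any two systems of action coordinates near a compact connected regular fiber differ by an element of $\aff$. The computation expressing $\mathcal{X}_{f_i}$ as a constant-coefficient vector field $\sum_k (\partial g_i/\partial p_k)\,\partial/\partial q_k$ on each nearby torus, followed by the observation that $2\pi$-periodicity of the flow forces each coefficient to lie in $\Z$, is the right mechanism, and continuity plus connectedness of the base neighborhood upgrades integer-valued to constant. Running it both ways (the $p_j$ in terms of the $f_i$ and vice versa, both having $2\pi$-periodic flows) gives $A, A^{-1} \in M_n(\Z)$, hence $A \in GL(n,\Z)$; effectiveness of the torus action is not strictly needed here, since either direction of the periodicity argument suffices on its own. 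You also correctly invoke connectedness of fibers (Atiyah--Guillemin--Sternberg) to identify $B_r$ with the regular values of $F$ so that $F$ itself is a chart.

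Two small bookkeeping points. First, there is a notational slip: the Liouville--Arnold--Mineur theorem gives $g \circ F|_U = (p_1,\ldots,p_n)$, so it is $g^{-1}$ (not $g$) that expresses the $f_i$ as functions of the $p_k$; since $\aff$ is a group this does not affect the conclusion, but the notation should be made consistent. Second, the phrase ``orbits that close up at time $2\pi$'' should be sharpened to ``the time-$2\pi$ flow is the identity'': that is what the $\T^n = (\R/2\pi\Z)^n$-action provides and what forces $a_k \in \Z$. With those cosmetic fixes the proof is complete.
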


\optional{do this proof}

Recall that compact toric integrable systems are classified by their moment image, which is a Delzant polytope. Proposition~\ref{prop:toric-integralaffine} tells us that the moment image also holds the data of the integral affine structure of $B_r$; essentially, in this case we can ``see'' the integral affine structure of $B_r$ by simply looking at $F(M)$. When moving to more complicated situations, such as semitoric systems, we will see that the integral affine structure plays a key role in the classification, and a polytope (or polygon) is a useful way to encode it. Recall further, from Proposition~\ref{prop:reg-action}, that action coordinates around a regular point can be computed via integrating a primitive of the symplectic form over certain cycles, and obtaining coordinates from integrating over such cycles is also something we will generalize to further situations.

\begin{remark}\label{rmk:int-affine-lattice}
    Suppose that the fibers of $F|_{F^{-1}(B_r)}\colon F^{-1}(B_r) \to B_r$ are connected. Then, the integral affine structure on $B_r$ can also be encoded as a lattice in $T^* B_r$, which is equivalent information as an integral affine atlas on $B_r$. 
    For instance this is the perspective taken in~\cite{PPT-nonsimple}.
    In the case that the fibers of $F$ are connected, the period lattice is formed in the following way: let $b\in B_r\subset \R^n$. Then each $\beta\in T^*_b\R^n$ determines a vector field $\mathcal{X}_\beta$ on $F^{-1}(b)$ by flowing along $\mathcal{X}_\beta$ for time 1. Now, define $\mathcal{L}_b\subset T^*_b\R^n$ to be such that $2\pi\mathcal{L}$ is the isotropy group for this action. It turns out that each such $\mathcal{L}_b$ is a lattice of full rank in $T^*_b\R^n$. Then the period lattice on $B_r$ is then $\mathcal{L} = \sqcup_{b\in B_r}\,\mathcal{L}_b$ (c.f.~\cite{Dui88}). 
    This construction can also be extended from $B_r$ to all of $B$, but in that case there may be values of $b$ such that $\mathcal{L}_b$ is not full rank in $T^*_b\R^n$.
\end{remark}

\begin{remark}\label{rmk:int-affine-question}
    Proposition~\ref{prop:toric-integralaffine} and Theorem~\ref{thm:toric-classification} together suggest that for toric integrable systems the integral affine structure on $B_r$ determines the toric system (up to the appropriate notions of isomorphism), and indeed, this is the case\footnote{Toric integrable systems up to symplectomorphisms which preserve the fibration (but not necessarily the momentum map itself) are classified by Delzant polytopes up to integral affine transformations (translations and $\mathrm{GL}(n,\Z)$), which is the same as the data of the integral affine structure of the polytope.}. 
    It is natural now to wonder in what cases an integral affine structure can determine the associated integrable system. 
    To attack this problem, one obstacle is that a notion of a \emph{singular integral affine structure} needs to be rigorously developed on all of $B$, extending the usual integral affine structure on $B_r$.
    This is particularly difficult in the case that the fibers of $F$ are not connected, in which case $B$ cannot be identified with $F(M)\subset \R^n$.
    Once such a structure is defined for a class of integrable systems, a key open question is: under what conditions does $B$, equipped with this structure, determine the integrable system up to fiberwise symplectomorphism?
    This is a delicate question: there are examples of integrable systems where a reasonable definition of singular integral affine structure exists and in which $B$ with this structure is known to not determine the associated integrable system. Nevertheless, it remains possible that the system can be recovered in a broad class of cases. 
    Even partial results in this direction, understanding specific classes of system that are or are not determined by the singular integral affine manifold $B$, or understanding what properties are encoded in $B$, would be of high interest.
    For further discussion on this question, see~\cite[Question 2.1]{open-questions}.
\end{remark}

\begin{remark}
    In addition to the monodromy of the integral affine structure on the base, Duistermaat introduced another invariant of a regular Lagrangian fibration in~\cite{Dui88}, a Chern class (which automatically vanishes if the symplectic form is exact, for instance on a cotangent bundle).
    There is a thorough description of Duistermaat's Chern class in~\cite[Remark 1.7]{karshon-lerman}.
    It is implicit in~\cite{Dui88} that the base of a regular Lagrangian fibration (with its structure as an integral affine manifold) and this Chern class together classify the fibration (this is explained in detail in~\cite[Example 2]{Mol}). Recently, this was extended from purely regular fibrations to those which also include singularities with elliptic blocks, in which case the base is an integral affine manifold with corners, see~\cite{Mol}.
\end{remark}

\section{Semitoric integrable systems}
\label{sec:st}

The toric classification, Theorem~\ref{thm:toric-classification}, has had a wide-ranging impact on many fields of mathematics, and there are many different ways that one could attempt to generalize this result. 
From the point of view of integrable systems, a natural next step is to consider a system for which some, but not all, of the Hamiltonian flows of the components of the momentum map generate a torus action. 
In this section, we will concentrate on an important class of 4-dimensional systems for which one of the two integrals generates a periodic flow and for which the singularities of the system are relatively nice.
We use the convention of identifying $S^1$ with $\R/(2\pi \Z)$, so the flow of a vector field generating an effective $S^1$-action is equivalent to it having a periodic flow with period $2\pi$.

The following important class of systems was first introduced by V\~{u} Ng\d{o}c~\cite{VN2007} and Pelayo-V\~{u} Ng\d{o}c~\cite{PeVN2009}.
Recall that a map is called \emph{proper} if the preimage of every compact set is compact.

\begin{definition}
    A \emph{semitoric integrable system} (sometimes called a \emph{semitoric system} for brevity) is a 4-dimensional integrable system $(M,\om,F=(J,H))$ such that 
    \begin{enumerate}
        \item $J\colon M\to\R$ is proper,
        \item the flow of $\mathcal{X}_J$ generates an effective $S^1$-action on $M$,
        \item all singularities of $F$ are non-degenerate, and do not include any hyperbolic blocks (as in Theorem~\ref{thm:eliasson}).
    \end{enumerate}
    A semitoric system is called \emph{simple} if it also satisfies the following generic condition:
\begin{itemize}
    \item[4.] there is at most one singularity of focus-focus type in each level set of $J$.
\end{itemize}
\end{definition}

Note that item (1) is automatic if $M$ is compact, item (2) is equivalent to requiring that $(M,\om,J)$ is a Hamiltonian $S^1$-space in the sense of Karshon~\cite{karshon}, and item (3) is equivalent to requiring that all singularities which arise in a semitoric system are either elliptic-elliptic, focus-focus, or elliptic-regular, as in Theorem~\ref{thm:eliasson}.
We will call any fiber of $F$ which consists of only regular a focus-focus points a \emph{focus-focus fiber}, and we will see that understanding these fibers, and the impact of their existence on the global structure of the system (due to the monodromy around such fibers) is the key to obtaining the classification of semitoric systems.

\begin{definition}\label{def:st-isomorphism}
Two semitoric systems $(M,\om,F=(J,H))$ and $(M',\om',F'=(J',H'))$ are called \emph{isomorphic} if there exists a pair $(\Phi,g)$ where $\Phi\colon M\to M'$ is a symplectomorphism such that $g\colon \R^2 \times \R^2$ is a local diffeomorphism satisfying $g\circ F = F' \circ \Phi$, $g(x,y) = (x,g_2(x,y))$, and $\frac{\partial g_2}{\partial y}>0$. The pair $(\Phi,g)$ is called an \emph{isomorphism of semitoric systems}.
\end{definition}

Note that if $(\Phi,g)$ is a semitoric isomorphism, then $\Phi^*J' = J$, so $\Phi$ is an $S^1$-equivariant symplectomorphism. Furthermore, notice that a semitoric isomorphism is a special case of an isomorphism of integrable systems, with the extra restrictions that $\Phi$ preserve the moment map for the $S^1$-action and the orientation requirement that $\frac{\partial g_2}{\partial y}>0$ (to be an isomorphism of integrable systems, this value would only have to be nonzero).

Semitoric systems are a natural class of integrable systems, which appears in several important examples in physics, such as the coupled angular momenta system~\cite{SaZh1999,LFP}. Furthermore, by replacing item (1) above with the weaker requirement that the joint map $F=(J,H)$ is proper, even more physical systems fit the definition, such as the familiar spherical pendulum (Example~\ref{ex:spherical-pen}). Such systems are called \emph{proper semitoric} and are discussed in~\cite{PeRaVN2015}.
Semitoric systems, and their generalization almost-toric fibrations, also appear in various aspects of symplectic topology, see Section~\ref{sec:close-ATFs-symp-top}.

\optional{if we cite some SYZ stuff put it here}

Now we introduce a foundational example of a semitoric system.

\begin{example}[{Coupled angular momenta system~\cite{SaZh1999,LFP}}]\label{ex:coupledspins}
Let $\om_{S^2}$ denote the standard symplectic form on $S^2$ (the one for which the total volume is $4\pi$).
We will now define an integrable system dependent on parameters $0<R_1<R_2$ and $t\in [0,1]$.
Let $(M,\om_{R_1,R_2})$ be the symplectic manifold with $M = S^2\times S^2$ and $\om_{R_1,R_2} = R_1\om_{S^2} \oplus R_2\om_{S^2}$.
We view $M$ as the product of unit spheres in $\R^3\times \R^3$ with coordinates $(x_1,y_1,z_1,x_2, y_2, z_2)$.
Finally, let $F_t = (J,H_t)$ where
\[
J = R_1 z_1 + R_2 z_2 \qquad H_t = (1-t)z_1 + t (x_1 x_2 + y_1 y_2 + z_1 z_2).
\]
Then $(M,\om_{R_1,R_2},F_t)$ is an integrable system and this family is known as the \emph{coupled angular momentum system}.
Let $p = (0,0,1, 0, 0, -1)$.
Given any fixed $0<R_1<R_2$ then there exist $t^-, t^+\in (0,1)$ such that
\begin{itemize}
    \item when $0\leq t <  t^-$, the system is \textbf{semitoric with no focus-focus points} and $p$ is singular of elliptic-elliptic type,
    \item when $t=t^-$, the system \textbf{has a single degenerate singular point}, which occurs at $p$, and all other singular points are of elliptic-regular or elliptic-elliptic type,
    \item when $t^-<t<t^+$, the system is \textbf{semitoric with exactly one focus-focus point}, which occurs at $p$,
    \item when $t=t^+$, the system \textbf{has a single degenerate singular point}, which occurs at $p$, and all other singular points are of elliptic-regular or elliptic-elliptic type,
    \item when $t^+<t<1$, the system is \textbf{semitoric with no focus-focus points} and $p$ is singular of elliptic-elliptic type.
\end{itemize}
In particular, as $t$ increases from 0 to 1, the point $p$ goes from being elliptic-elliptic, to degenerate, to focus-focus, to degenerate again, and eventually back to elliptic-elliptic.
During this time, the point $F_t(p)$ ``travels'' from the upper boundary to the lower boundary of the image $F_t(M)$, see Figure~\ref{fig:momentummapimage}.
Furthermore, the system is always semitoric, except for when $t=t^\pm$, at which time the point $p$ is degenerate. 
\end{example}

The coupled angular momenta system is an important example in physics, and in fact was originally introduced by physicists~\cite{SaZh1999}, since the associated quantum system exhibits monodromy.
The system is already interesting from the classical side though, since the integral affine structure on the base of the singular Lagrangian fibration induced by this system (see Section~\ref{sec:affine}) also exhibits monodromy\footnote{of course, it is no accident that both the quantum spectrum and the affine structure have the same property (monodromy), as these two structures are closely related. The monodromy in both settings comes from the existence of a focus-focus point.}. This example was the main motivation for forming a general theory of integrable systems with a fixed $S^1$-action and a second integral that varies with a parameter~\cite{LFPal,LFPal-SF2}, see Section~\ref{sec:st-families} for a discussion of semitoric families and their generalizations.
Also, in~\cite{HoPa2017} the authors describe a generalization of this system, see Section~\ref{sec:generalized-spins}.

\begin{remark}
    In general, the bifurcation that an integrable system undergoes when a singular point switches between being of focus-focus and elliptic-elliptic type is called a \emph{Hamiltonian-Hopf bifurcation}.
    In the context of almost toric fibrations, which are a generalization of semitoric systems due to Symington~\cite{Sy2003}, this is also known as a \emph{nodal trade}. We discuss almost toric fibrations briefly in Section~\ref{sec:ATFs}.
\end{remark}

While the definition of semitoric systems is broad enough that they represent a substantial generalization of toric systems, the definition is also restrictive enough that semitoric systems are relatively well-behaved. In particular, they admit a classification in terms of marked labeled polygons which generalizes the classification for four-dimensional toric systems in terms of polygons.
The plan for this section is to first describe the invariants of a semitoric system as an abstract object, and then describe how to construct these invariants from a given system. More specifically:
\begin{itemize}[nosep]
    \item In Section~\ref{sec:st-invariants} we describe the semitoric invariants abstractly (in both the simple and general cases);
    \item In Section~\ref{sec:st-construct} we give an overview of how the invariants are constructed from a given system;
    \item In Section~\ref{sec:st-classify} we state and discuss the Pelayo-V\~{u} Ng\d{o}c classification theorem~\cite{PeVN2009,PeVN2011} of simple semitoric systems, and it's generalization to all semitoric systems, simple or not~\cite{PPT-nonsimple};
    \item In Section~\ref{sec:generalized-spins} we explain the generalization of the coupled angular momenta system from~\cite{HoPa2017}.
\end{itemize}

\subsection{The semitoric invariants}
\label{sec:st-invariants}

In this section, we will describe the invariants of a semitoric system, independently of the system itself. In the next section we will discuss how these invariants are actually obtained from a given semitoric system.

We will package all of the invariants together, in what is called a marked labeled polygon. 
The invariants naturally break into two parts: the marked semitoric polygon, and the labels on the marked points of this polygon.

\subsubsection{Marked semitoric polygons}
\label{sec:marked-st-poly}

We will start with an imprecise description of marked semitoric polygons, then we will describe them formally, and finally we will describe the natural equivalence relation on these objects.

\paragraph{An informal description.}
Roughly speaking, a marked semitoric polygon is a rational convex polyhedron (if it is compact, this means that it is a rational convex polygon) with a finite number of marked points in the interior, and a ray called a cut coming from each marked point traveling either directly up or directly down. The points where the cuts meet the boundary are all vertices, and these vertices have to satisfy special conditions depending on the number of cuts hitting the boundary at that point (there can be more than one), while all other vertices of the polygon satisfy the same conditions as the Delzant case. See Figure~\ref{fig:marked-st-poly}.

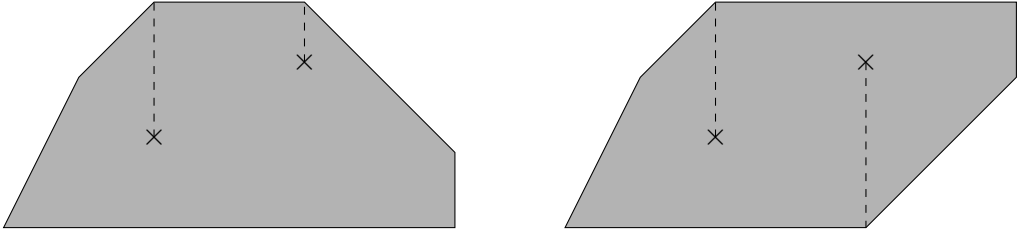
\begin{figure}
\begin{center}
\begin{tikzpicture}[scale=2.0]
\begin{scope}[xscale=-1]
\filldraw[draw=black, fill=gray!60] (0,0) node[anchor=north,color=black]{}
  -- (0,0.5) node[anchor=south,color=black]{}
  -- (1,1.5) node[anchor=south,color=black]{}
  -- (2,1.5) node[anchor=south,color=black]{}
  -- (2.5,1) node[anchor=north,color=black]{}
  -- (3,0) node[anchor=north,color=black]{}
  -- cycle;
\draw [dashed] (1,1.1) -- (1,1.5);
\draw (1,1.1) node[] {$\times$};
\draw [dashed] (2,0.6) -- (2,1.5);
\draw (2,0.6) node[] {$\times$};  
\begin{scope}[xshift = -220pt]
\filldraw[draw=black, fill=gray!60] (4,1) node[anchor=north,color=black]{}
  -- (4,1.5) node[anchor=south,color=black]{}
  -- (6,1.5) node[anchor=south,color=black]{}
  -- (6.5,1) node[anchor=north,color=black]{}
  -- (7,0) node[anchor=north,color=black]{}
  -- (5,0) node[anchor=south,color=black]{}
  -- cycle;
\draw [dashed] (5,1.1) -- (5,0);
\draw (5,1.1) node[] {$\times$};
\draw [dashed] (6,0.6) -- (6,1.5);
\draw (6,0.6) node[] {$\times$};  
\end{scope}
\end{scope}
\end{tikzpicture}
\end{center}
\caption{Two representatives of the same marked semitoric polygon, related by changing the cut direction on the second marked point. In this example, all vertices are either Delzant or fake (there are no hidden corners in this example).}
\label{fig:marked-st-poly}
\end{figure}

\paragraph{The formal definition.}
We will now make this description more precise. First of all, in keeping with the original definition of the invariant in~\cite{PeVN2009}, we will use the term \emph{marked semitoric polygon} to refer to the entire (infinite) equivalence class of polygons, so we first describe a single representative.
Recall that a convex rational polyhedron is a set in $\R^2$ which is locally the intersection of a finite number of closed half-planes whose boundaries admit normal vectors with integer entries. In the case that the set is compact, this definition is equivalent to the definition of a rational convex polygon.

Let
\begin{equation}\label{eqn:T}
    T = \begin{pmatrix}
        1 & 0 \\ 1 & 1
    \end{pmatrix}.
\end{equation}
\begin{definition}
    Let $\De$ be a convex rational polyhedron, and let $p$ be a vertex of $\De$. Let $v_1,v_2\in\Z^2$ be the primitive integral vectors directing the edges emanating from $p$, ordered so that $\det(v_1,v_2)>0$. Then we say that:
    \begin{enumerate}
        \item $p$ satisfies the \emph{Delzant condition} if $\det(v_1,v_2)=1$,
        \item $p$ satisfies the \emph{fake corner condition for $k$ cuts} if $\det(v_1,T^k v_2)=0$, and
        \item $p$ satisfies the \emph{hidden corner condition for $k$ cuts} if $\det(v_1,T^k v_2)=1$.
    \end{enumerate}
\end{definition}

\begin{remark}
The definition of this invariant in \cite[Definition 4.4]{PeVN2009} only includes the case that $k=1$, but here we also include the case of $k>1$ since this corresponds to non-simple semitoric systems (which are associated to semitoric polygons with more than one marked point in a single vertical line). Pelayo and V\~{u} Ng\d{o}c's classification of simple semitoric systems was extended to the non-simple case in~\cite{PPT-nonsimple}, and the original construction of a polygon from a semitoric system in~\cite{VN2007} already included the non-simple case.
The complete invariant in the non-simple case is significantly more complicated than the complete invariant in the simple case, but the underlying marked semitoric polygons are essentially the same in both cases, so we present a general definition here.
\end{remark}

For $i=1,2$, let $\proj_i\colon \R^2\to\R$ be the projection onto the $i^{\text{th}}$ coordinate,
and denote $\Z_2 := \{-1,1\}$.
Given $c\in\R^2$ and $\epsilon\in\Z_2$, we denote by $L_c^\epsilon$ the ray starting at $c$ and going up if $\epsilon=1$ and down if $\epsilon=-1$. That is,
$L_c^\epsilon = \{(x,y)\in\R^2 \mid x=\proj_1(c) \text{ and }\epsilon y\geq \epsilon \,\proj_2(c)\}$.

\begin{definition}\label{def:st-poly-repr}
    A \emph{semitoric polygon representative} is a triple $(\De,\vec{c},\vec{\epsilon}\,)$ where 
    \begin{enumerate}
        \item $\De\subset \R^2$ is a convex, rational polygon,
        \item $\vec{c} = (c_1,\ldots, c_m)$ satisfies $c_k\in\mathrm{int}(\De)$ for $k\in\{1,\ldots,m\}$, and the $c_i$ are in lexicographic\footnote{lexicographic order means that $(x_1,y_1)<(x_2,y_2)$ if and only if either $x_1<x_2$ or $x_1=x_2$ and $y_1<y_2$.} order,
        \item $\vec{\epsilon} = (\epsilon_1,\ldots,\epsilon_m)\in(\Z_2)^m$,
    \end{enumerate}
    and such that:
    \begin{enumerate}
        \item\label{item:finite-height} for each $x_0\in \proj_1(\De)$, the set $\De \cap \proj_1^{-1}(x_0)$ is compact.
        \item each point in the intersection of $\partial \De$ and $\cup_i L_{c_i}^{\epsilon_i}$ is a vertex of $\De$, and satisfies either the fake corner condition for $k$ cuts or the hidden corner condition for $k$ cuts, where $k$ is the number of $i$ such that the vertex is contained in $L_{c_i}^{\epsilon_i}$.
        \item all other vertices of $\De$ satisfy the Delzant condition.
    \end{enumerate}
    Furthermore, $(\De,\vec{c},\vec{\epsilon}\,)$ is said to be \emph{simple} if it satisfies one additional condition:
    \begin{itemize}
        \item[4.] There is at most one marked point in each vertical line $\proj_1^{-1}(x)$ for $x\in\R$. 
    \end{itemize}
\end{definition}

The $c_i$ are called the \emph{marked points} and the $L_{c_i}^{\epsilon_i}$ are called the \emph{cuts}. We represent the marked points by "$\times$" and the cuts by dotted lines.
Note that item~\eqref{item:finite-height} is automatic if $\De$ is compact.

The motivations behind the conditions on the corners are easier to understand after seeing how the marked polygon transforms. The idea is that there is an operation which changes the direction of the cuts, and the fake and hidden corners are designed so that by moving all cuts away from the given vertex the point which remains is either not a vertex anymore (for the fake vertices) or satisfies the Delzant condition (for the hidden vertices). We will describe the operations on these polygons now.

\paragraph{The equivalence relation.}
There are two natural equivalence relations on semitoric polygon representatives, motivated by the fact that when constructing the invariant from a semitoric system, as described in Section~\ref{sec:construct-poly}, there are certain choices involved which can produce different results. For the invariant of a given system to be well-defined, we consider such outputs to be equivalent.

There are two types of equivalence: a global skewing or translation of the polygon, and an operation which changes the direction of a cut (and adapts the polygon appropriately).
We will define a group action which encodes these.
Let $\mathcal{T}\subset \text{Aff}(\R^2)$ denote those integral affine transformations which preserve the $x$-component, which is the subgroup generated by $T$ and vertical translations of $\R^2$. For $j\in\R$ define $\mathfrak{t}_j\colon \R^2\to\R^2$ by 
\[
 \mathfrak{t}_j (x,y) = \begin{cases}
     (x,y+x-j) & \text{if }x\geq j,\\
     (x,y) & \text{otherwise.}
 \end{cases}
\]
Then $\mathfrak{t}_j$ is equivalent to applying the identity map to the set $x\leq j$ and $T$, relative to coordinates taking the origin anywhere on the line $x=j$, for $x\geq j$, see Figure~\ref{fig:marked-st-poly}.

Let $(\De,\vec{c},\vec{\epsilon}\,)$ be a semitoric polygon representative and let $m\geq 0$ denote the number of marked points.
Then $(\tau,\vec{\epsilon}\,')\in\mathcal{T}\times (\Z_2)^m$ acts on $(\De,\vec{c},\vec{\epsilon}\,)$ by
\begin{equation}\label{eqn:stpoly-action} (\tau,\vec{\epsilon}\,')\cdot(\De,\vec{c},\vec{\epsilon}\,) = \big( \sigma(\De), \sigma(\vec{c}), (\epsilon_1'\epsilon_1,\ldots, \epsilon_m'\epsilon_m)\big)
\end{equation}
where $\sigma = \tau\circ \mathfrak{t}^{u_1}_{\proj_1(c_1)}\circ\mathfrak{t}^{u_m}_{\proj_1(c_m)}$ and $u_k = \epsilon_k (1-\epsilon_k')/2$ for $k\in\{1,\ldots, m\}$.

Considering Equation~\eqref{eqn:stpoly-action}, the pair $(\tau,\vec{\epsilon}\,')$ acts by globally applying $\tau$ to the polygon and marked points, and by switching the $k^{\mathrm{th}}$ cut direction if and only if $\epsilon_k' = -1$. For each cut direction that is switched, the portion of the polygon to the right of that cut is skewed either upwards or downwards. 

\begin{definition}\label{def:marked-st-poly}
    A \emph{marked semitoric polygon} is the orbit of a semitoric polygon representative $(\De,\vec{c},\vec{\varepsilon}\,)$ (as in Definition~\ref{def:st-poly-repr}) under the action of $\mathcal{T}\times (\Z_2)^m$ given in Equation~\eqref{eqn:stpoly-action}. We denote the orbit by $[\De,\vec{c},\vec{\varepsilon}\,]$.
\end{definition}

In \cite[Lemma 4.2]{PeVN2011}, Pelayo and V\~{u} Ng\d{o}c showed that if $[\De,\vec{c},\vec{\varepsilon}\,]$ is a marked semitoric polygon then each element of $[\De,\vec{c},\vec{\varepsilon}\,]$ also satisfies the conditions given in Definition~\ref{def:st-poly-repr} to be a semitoric polygon representative. Furthermore, note that $[\De,\vec{c},\vec{\varepsilon}\,]$ is completely determined by any single representative, and therefore it is typical to provide a single polygon to represent the entire family.

\paragraph{Examples of marked semitoric polygons.}
First of all, any Delzant polygon satisfies the conditions to be a semitoric polygon representative with no markings (though the equivalence relation on marked semitoric polygon representatives identifies different Delzant polygons). We now explain a few more examples.

\begin{example}\label{ex:st-poly-triangle}
The polygon from Figure~\ref{fig:not-delzant}, which was an example of a polygon which is not Delzant, can be made to be a marked semitoric polygon representative by adding a marked point with an upwards cut underneath the top vertex, as in Figure~\ref{fig:st-poly-triangle}. Note that this is a fake corner: when changing the cut direction a straight line (with no vertex) is revealed.
\end{example}

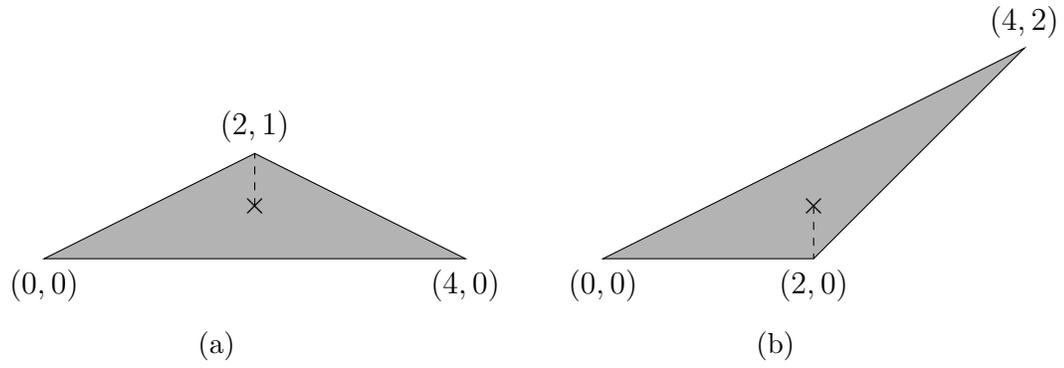
\begin{figure}
\begin{center}
\begin{subfigure}[b]{.35\linewidth}
\centering
\begin{tikzpicture}[scale=1.4]
\filldraw[draw=black, fill=gray!60] (0,0) 
  -- (2,1)
  -- (4,0)
  -- cycle;
\draw (2,0.5) node {$\times$}; 
\draw [dashed] (2,0.5) -- (2,1); 
\draw (0,0) node[below] {$(0,0)$};
\draw (4,0) node[below] {$(4,0)$};
\draw (2,1) node[above] {$(2,1)$};
\end{tikzpicture}
\caption{}
\label{fig:triangle-with-cut}
\end{subfigure}\qquad\qquad
\begin{subfigure}[b]{.35\linewidth}
\centering
\begin{tikzpicture}[scale=1.4]
\filldraw[draw=black, fill=gray!60] (0,0) 
  -- (4,2)
  -- (2,0)
  -- cycle;
\draw (2,0.5) node {$\times$}; 
\draw [dashed] (2,0.5) -- (2,0); 
\draw (0,0) node[below] {$(0,0)$};
\draw (4,2) node[above] {$(4,2)$};
\draw (2,0) node[below] {$(2,0)$};
\end{tikzpicture}
\caption{}
  \end{subfigure}
\caption{Two representatives of the same marked semitoric polygon, related by a change in the cut direction. Both representatives include a single fake corner and two Delzant corners.}
\label{fig:st-poly-triangle}
\end{center}
\end{figure}

\begin{example}
    For an example with a hidden corner, consider the marked semitoric polygon representatives shown in Figure~\ref{fig:st-poly-hidden}.
    The representative on the left includes a hidden corner (since moving the cut reveals a Delzant corner) and two Delzant corners, while the representative on the right includes three Delzant corners and one fake corner.
\end{example}

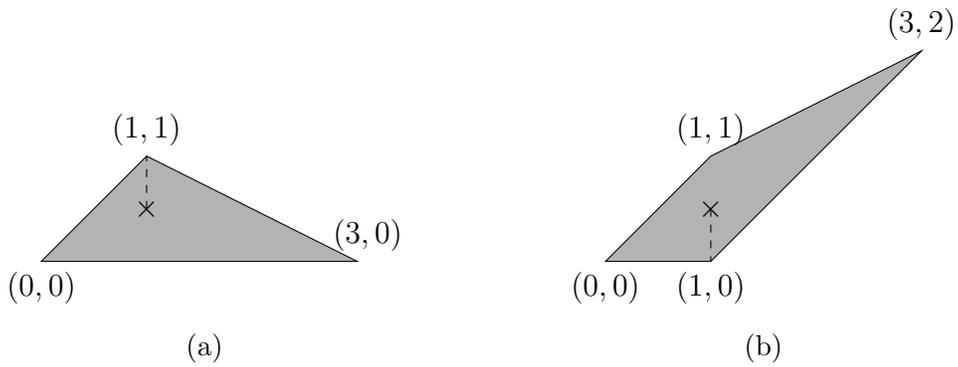
\begin{figure}
\begin{center}
\begin{subfigure}[b]{.35\linewidth}
\centering
\begin{tikzpicture}[scale=1.4]
\filldraw[draw=black, fill=gray!60] (0,0) 
  -- (1,1)
  -- (3,0)
  -- cycle;
\draw (1,0.5) node {$\times$}; 
\draw [dashed] (1,0.5) -- (1,1); 
\draw (0,0) node[below] {$(0,0)$};
\draw (1,1) node[above] {$(1,1)$};
\draw (3.1,0) node[above] {$(3,0)$};
\end{tikzpicture}
\caption{}
\end{subfigure}\qquad\qquad
\begin{subfigure}[b]{.35\linewidth}
\centering
\begin{tikzpicture}[scale=1.4]
\filldraw[draw=black, fill=gray!60] (0,0) 
  -- (1,1)
  -- (3,2)
  -- (1,0)
  -- cycle;
\draw (1,0.5) node {$\times$}; 
\draw [dashed] (1,0.5) -- (1,0); 
\draw (0,0) node[below] {$(0,0)$};
\draw (1,1) node[above] {$(1,1)$};
\draw (3,2) node[above] {$(3,2)$};
\draw (1,0) node[below] {$(1,0)$};
\end{tikzpicture}
\caption{}
  \end{subfigure}
\caption{Two representatives of the same marked semitoric polygon, related by a change in the cut direction.}
\label{fig:st-poly-hidden}
\end{center}
\end{figure}

\begin{example}
    For an example of a marked semitoric polygon with two marked points consider the example shown in Figure~\ref{fig:st-poly-2FF}. 
    One representative of this system is the marked polygon with vertices $(0,0),(1,1),(2,1),(3,0)$, marked points $c_1,c_2\in\mathrm{int}(\De)$ with $\text{proj}_1(c_1)=1$ and $\text{proj}_2(c_2)=2$. While the $y$-components of the $c_i$ are also part of the invariant, we have not indicated them here. Since there are two marked points, there are two cuts which can be changed between up and down. The four representatives shown in the figure are not only related by cuts, though. For instance, to get from the top left figure to the bottom left figure, the left cut is changed from up to down and then the entire polygon is acted on by $T^{-1}$. The situation is similar for getting from the top right representative to the bottom left one.
\end{example}

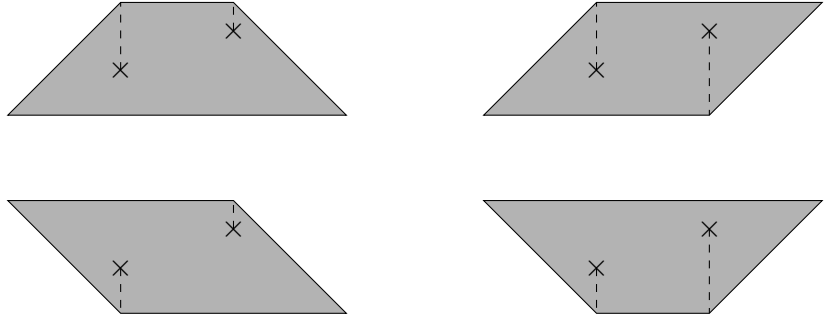
\begin{figure}
\begin{center}
\begin{tikzpicture}[scale=1.5]
\filldraw[draw=black, fill=gray!60] (0,0) 
  -- (1,1)
  -- (2,1)
  -- (3,0)
  -- cycle;
\draw (1,0.4) node {$\times$}; 
\draw [dashed] (1,0.4) -- (1,1); 
\draw (2,0.75) node {$\times$}; 
\draw [dashed] (2,0.75) -- (2,1); 
\begin{scope}[xshift = 120pt]
\filldraw[draw=black, fill=gray!60] (0,0) 
  -- (1,1)
  -- (3,1)
  -- (2,0)
  -- cycle;
\draw (1,0.4) node {$\times$}; 
\draw [dashed] (1,0.4) -- (1,1); 
\draw (2,0.75) node {$\times$}; 
\draw [dashed] (2,0.75) -- (2,0); 
\end{scope}
\begin{scope}[yshift = -50pt]
\filldraw[draw=black, fill=gray!60] (0,1) 
  -- (2,1)
  -- (3,0)
  -- (1,0)
  -- cycle;
\draw (1,0.4) node {$\times$}; 
\draw [dashed] (1,0.4) -- (1,0); 
\draw (2,0.75) node {$\times$}; 
\draw [dashed] (2,0.75) -- (2,1); 
\end{scope}
\begin{scope}[xshift = 120pt, yshift=-50pt]
\filldraw[draw=black, fill=gray!60] (0,1) 
  -- (3,1)
  -- (2,0)
  -- (1,0)
  -- cycle;
\draw (1,0.4) node {$\times$}; 
\draw [dashed] (1,0.4) -- (1,0); 
\draw (2,0.75) node {$\times$}; 
\draw [dashed] (2,0.75) -- (2,0); 
\end{scope}
\end{tikzpicture}
\caption{Four representatives of the same semitoric polygon, which has two focus-focus points. The labels on the vertices are not shown to simplify the figure, but each edge of each polygon has slope either zero, one, or negative one.}
\label{fig:st-poly-2FF}
\end{center}
\end{figure}

\begin{example}
In Figure~\ref{fig:st-poly-non-simple} we show an example of a marked semitoric polygon with two marked points in the same vertical line. In this situation the cuts can overlap and make the figure more complicated. In the top left figure, for instance, there are two cuts incident on the top boundary of the polygon, and this is why the slope decreases by two here (instead of decreasing by one, like when there is one cut). In the top right representative, the two cuts face towards each other and go through the marked points. It is also possible to have the two marked points completely coincide, but that is not what occurs in this example since they are at different heights.
\end{example}

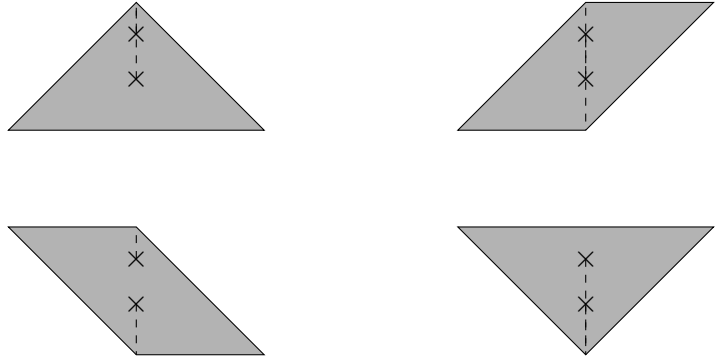
\begin{figure}
\begin{center}
\begin{tikzpicture}[scale=1.7]
\filldraw[draw=black, fill=gray!60] (0,0) 
  -- (1,1)
  -- (2,0)
  -- cycle;
\draw (1,0.4) node {$\times$}; 
\draw [dashed] (1,0.4) -- (1,1); 
\draw (1,0.75) node {$\times$}; 
\draw [dashed] (1,0.75) -- (1,1); 
\begin{scope}[xshift = 100pt]
\filldraw[draw=black, fill=gray!60] (0,0) 
  -- (1,1)
  -- (2,1)
  -- (1,0)
  -- cycle;
\draw (1,0.4) node {$\times$}; 
\draw [dashed] (1,0.4) -- (1,1); 
\draw (1,0.75) node {$\times$}; 
\draw [dashed] (1,0.75) -- (1,0); 
\end{scope}
\begin{scope}[yshift = -50pt]
\filldraw[draw=black, fill=gray!60] (0,1) 
  -- (1,1)
  -- (2,0)
  -- (1,0)
  -- cycle;
\draw (1,0.4) node {$\times$}; 
\draw [dashed] (1,0.4) -- (1,0); 
\draw (1,0.75) node {$\times$}; 
\draw [dashed] (1,0.75) -- (1,1); 
\end{scope}
\begin{scope}[xshift = 100pt, yshift=-50pt]
\filldraw[draw=black, fill=gray!60] (0,1) 
  -- (2,1)
  -- (1,0)
  -- cycle;
\draw (1,0.4) node {$\times$}; 
\draw [dashed] (1,0.4) -- (1,0); 
\draw (1,0.75) node {$\times$}; 
\draw [dashed] (1,0.75) -- (1,0); 
\end{scope}
\end{tikzpicture}
\caption{Four representatives of the same semitoric polygon, which has two marked points which occur in the same vertical line. Again, the vertex labels have been omitted, but each edge of each polygon has slope either zero, one, or negative one.}
\label{fig:st-poly-non-simple}
\end{center}
\end{figure}

The paper~\cite{packingIGL} contains a concise and accessible treatment of marked semitoric polygons (and Delzant polygons in dimension four), and furthermore includes many examples of marked semitoric polygons.

\subsubsection{The labels: Taylor series type invariants}
\label{sec:invariants-labels}

The marked semitoric polygon is an important invariant of semitoric systems, but it is not a complete invariant. Each marked point on the polygon must also be labeled with extra information, coming from the semi-local invariants (i.e.~invariants of a neighborhood of the fiber) of focus-focus points in integrable systems introduced in~\cite{VN2007}. If two or more of the marked points are equal, then the label invariants are more complicated, as described in~\cite{PT}, and in that case the invariants of all marked points which are equal interact and must be treated together as a single object.
We will describe the labels in the single-pinched case and the multi-pinched case separately.

\paragraph{The labels in the simple case.}
In fact, the description in this section applies to the labels for any marked polygon for which all of the marked points $c_k$ are distinct, which is slightly more general than the simpleness condition given in Definition~\ref{def:st-poly-repr} (which does not allow more than one marked point to lie in the same vertical line).

In the case that all marked points are distinct, each marked point corresponds to a single focus-focus point of the semitoric system, which lies in a once-pinched torus as in Figure~\ref{fig:pinchedtorus}. The invariants of such a fiber were given by V\~{u} Ng\d{o}c~\cite{VN2003}. We give an overview of how this invariant is obtained from the system in Section~\ref{sec:construct-labels}, and in this section we simply describe the invariant itself.

\begin{figure}
    \centering
    \includegraphics[width=0.3\linewidth]{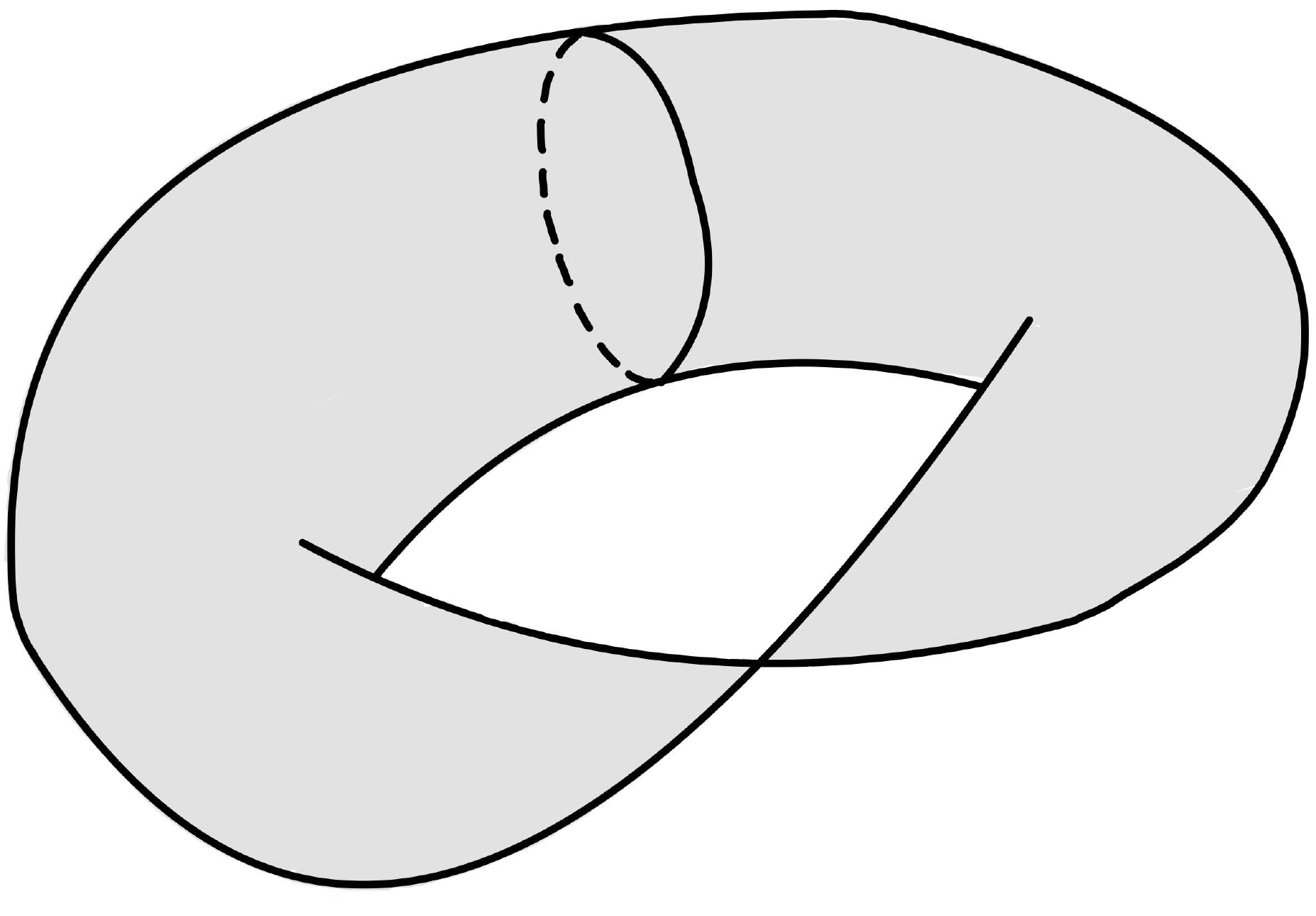}
    \caption{A compact fiber containing a single focus-focus point: a pinched torus.}
    \label{fig:pinchedtorus}
\end{figure}

Let $\R[[X,Y]]$ denote the set of all power series in $X$ and $Y$ with real coefficients, and let $\R_0[[X,Y]]\subset \R[[X,Y]]$ denote the subset of series which have zero constant term. Then, the \emph{Taylor series invariant} is a collection of Taylor series 
\[
\vec{S}^\infty = (S^\infty_1,\ldots, S^\infty_m) \in \Big(\R_0[[X,Y]]/(2\pi X \Z)\Big)^m
\]
and for each $k\in\{1,\ldots, m\}$ we say that $S^\infty_k$ is the Taylor series invariant of $c_k$.
Note that the labels can be any possible Taylor series with zero constant term, and note that the coefficient of $X$ in each Taylor series is only determined up to integer multiples of $2\pi$. 

These Taylor series completely determine the structure around the fiber of the focus-focus point, but there is an additional invariant known as the \emph{twisting index invariant} which encodes how the structure of each focus-focus fiber lies with respect to the rest of the integrable system. This is encoded as an integer for each marked point. The added complication with this invariant is that it depends on the choice of semitoric polygon, so it is more accurate to say that the twisting index is an assignment of a tuple of integers \[\vec{\kappa} = (\kappa_1,\ldots,\kappa_m)\in\Z^m,\]
one for each marked point, to each semitoric polygon representative $(\De,\vec{c},
\vec{\epsilon}\,)$. Thus, each marked point $c_k$ is labeled with a Taylor series $S^\infty_k$ (only defined up to $2\pi X\Z$) and an integer $\kappa_k$.
We obtain a collection called a \emph{marked labeled semitoric polygon representative},
\[ \Big(\big(\De,\vec{c},\vec{\epsilon}\,\big), \vec{S}^\infty, \vec{\kappa} \Big).
\]

As with the marked polygon, the values of the twisting index on a single representative determine the values on all representatives, since their transformation under the group action is prescribed by
\begin{equation}\label{eqn:action-k}
    (\tau,\vec{\epsilon}\,')\cdot \vec{\kappa} = \left( \kappa_k + n + \sum_{i=1}^k u_i\right)_{k=1}^m,
\end{equation}
where again $u_k = \epsilon_k (1-\epsilon_k')/2$ and here $n$ is the integer such that $\tau$ is a translation composed with $T^n$. On the other hand, the action of $\mathcal{T}\times (\Z_2)^m$ does not change the Taylor series invariants at all, since these are the invariants of the focus-focus points themselves and have nothing to do with the choice of polygon. 

We obtain that $\mathcal{T}\times (\Z_2)^m$ acts on a marked labeled semitoric polygon representative by
\begin{equation}\label{eqn:complete-group-action}
    (\tau,\vec{\epsilon}\,')\cdot \Big(\big(\De,\vec{c},\vec{\epsilon}\,\big), \vec{S}^\infty, \vec{\kappa} \Big) = \Big( (\tau,\vec{\epsilon}\,')\cdot\big(\De,\vec{c},\vec{\epsilon}\,\big), \vec{S}^\infty, (\tau,\vec{\epsilon}\,')\cdot \vec{\kappa} \Big).
\end{equation}
Here the action on $(\De,\vec{c},\vec{\epsilon}\,')$ is as given in Equation~\eqref{eqn:stpoly-action} and the action on $\vec{\kappa}$ is given by Equation~\eqref{eqn:action-k}.
We denote the orbit of $\big((\De,\vec{c},\vec{\epsilon}\,), \vec{S}^\infty, \vec{\kappa} \big)$ under this group action by $\big[(\De,\vec{c},\vec{\epsilon}\,), \vec{S}^\infty, \vec{\kappa} \big]$.

\begin{definition}\label{def:Y}
Let $\mathbf{Y}$ be the set of all orbits $\big[(\De,\vec{c},\vec{\epsilon}\,), \vec{S}^\infty, \vec{\kappa} \big]$ such that
\begin{enumerate}
    \item $[\De,\vec{c},\vec{\epsilon}\,]$ is a marked semitoric polygon (as in Definition~\ref{def:marked-st-poly}),
    \item Each $c_1,\ldots,c_m$ is distinct,
    \item $S^\infty_1,\dots,S^\infty_m\in\R_0[[X,Y]]/(2\pi X \Z)$
    \item $\kappa_1,\ldots,\kappa_m\in\Z$.
\end{enumerate}
We call $\mathbf{Y}$ the set of \emph{semitoric ingredients}. 
\end{definition}

Note that $\mathbf{Y}$ is slightly larger than the set of all sets of invariants satisfying the simplicity assumption, since a marked polygon portion of an element of $\mathbf{Y}$ can have multiple marked points in the same vertical line, as long as the points do not coincide.
We will see that there is a natural bijection from semitoric systems with at most one focus-focus point per fiber of $F$ and $\mathbf{Y}$ (Theorem~\ref{thm:PVN}), so we say that such systems are classified by $\mathbf{Y}$.

\paragraph{Packaging the Taylor series and twisting index together.}
In the case that all marked points are distinct, we have now seen that each marked point is labeled by an equivalence class of a Taylor series in $\R_0[[X,Y]]/(2\pi X)$ and an integer.

For any equivalence class $S^\infty \in \R[[X,Y]]/(2\pi X)$ let $\hat{S}^\infty\in\R[[X,Y]]$ be the representative of $S^\infty$ for which the coefficient of $X$ is taken to be in $[0,2\pi)$. Now, considering the labels $S^\infty_k$ and $\kappa_k$ of a marked point $c_k$ in a marked labeled polygon, we can encode these two pieces of information together as
\[
\widetilde{S}_k^\infty := \hat{S}^\infty_k + 2\pi \kappa_k X \in\R_0[[X,Y]].
\]
Note that both $S^\infty_k$ and $\kappa_k$ can be recovered from $\widetilde{S}_k^\infty$. Also, notice that while $S^\infty_k$ is not affected by the action of the group $\mathcal{T}\times (\Z_2)^m$ as in Equation~\eqref{eqn:complete-group-action}, the impact of that group action on $\kappa_k$ means that the group acts on $\widetilde{S}_k^\infty$ by
\begin{equation}\label{eqn:action-tilde-S}
 (\tau,\vec{\epsilon}\,')\cdot \widetilde{S}^\infty_k = \widetilde{S}^\infty_k + 2\pi X \left(n + \sum_{i=1}^k u_i \right),    
\end{equation}
where $n$ is the integer such that $\tau$ is a translation composed with $T^n$ and $u_k = \epsilon_k (1-\epsilon_k')/2$. 
For ease of notation, we omit the vector symbol and simply let $\widetilde{S}^\infty = (\widetilde{S}^\infty_1,\ldots,\widetilde{S}^\infty_m)$.
With Equation~\eqref{eqn:action-tilde-S}, we can define an action of $\mathcal{T}\times (\Z_2)^m$ on the set of semitoric polygon representatives with $m$ distinct marked points and one Taylor series labeling each marked point by 
\begin{equation}\label{eqn:complete-group-action-tilde}
    (\tau,\vec{\epsilon}\,')\cdot \Big(\big(\De,\vec{c},\vec{\epsilon}\,\big), \widetilde{S}^\infty \Big) = \Big( (\tau,\vec{\epsilon}\,')\cdot\big(\De,\vec{c},\vec{\epsilon}\,\big), (\tau,\vec{\epsilon}\,')\cdot \widetilde{S}^\infty \Big).
\end{equation}
Similarly to above, we denote the orbit of $\big((\De,\vec{c},\vec{\epsilon}\,), \widetilde{S}^\infty \big)$ under this action by $
\big[(\De,\vec{c},\vec{\epsilon}\,), \widetilde{S}^\infty \big]
$. This produces an invariant of semitoric systems which is clearly equivalent to the one appearing in Definition~\ref{def:Y}.

\begin{definition}\label{def:Ytilde}
Let $\widetilde{\mathbf{Y}}$ be the set of all orbits $\big[(\De,\vec{c},\vec{\epsilon}\,), \widetilde{S}^\infty \big]$ such that
\begin{enumerate}
    \item $[\De,\vec{c},\vec{\epsilon}\,]$ is a marked semitoric polygon,
    \item Each $c_1,\ldots,c_m$ is distinct,
    \item $S^\infty_1,\dots,S^\infty_m\in\R_0[[X,Y]].$
\end{enumerate}
\end{definition}

We emphasize that $S_k^\infty$ lives in the quotient $\R_0[[X,Y]] / (2\pi X \Z)$, while $\widetilde{S}_k^\infty$ is a representative in $\R_0[[X,Y]]$, adjusted by the twisting index.
There is a natural bijection from $\mathbf{Y}$ to $\widetilde{\mathbf{Y}}$ given by \[\big[(\De,\vec{c},\vec{\epsilon}\,), \vec{S}^\infty, \vec{\kappa} \big]\mapsto \big[(\De,\vec{c},\vec{\epsilon}\,), \widetilde{S}^\infty \big]\]
where $\widetilde{S}^\infty_k$ is formed by taking the representative of $S^\infty_k$ whose $X$-coefficient lies in $[0,2\pi)$ and adding $2\pi \kappa_k X$.
Soon we will see that $\mathbf{Y}$ classifies semitoric systems, and this bijection means that we can equally well use $\widetilde{\mathbf{Y}}$ instead of $\mathbf{Y}$ for the classification.

\begin{remark}
 The classification of simple semitoric systems was originally stated~\cite{PeVN2009,PeVN2011} in terms of \emph{five symplectic invariants}: (1) the number of focus-focus points invariant, (2) the (unmarked) semitoric polygon invariant, (3) the height invariant, (4) the Taylor series invariant, and the (5) twisting index invariant.
 We have combined the first three of these invariants into the object we have called the marked semitoric polygon. For $\mathbf{Y}$ we have left (4) and (5) separate, but in $\widetilde{\mathbf{Y}}$ we have combined them.
\end{remark}

\paragraph{The labels in the non-simple case.} In the case that some of the marked points in a semitoric polygon coincide, which corresponds to a semitoric system which has more than one focus-focus point in a single fiber of the momentum map, the labels corresponding to those two marked points interact and are replaced by a more complicated object.
This semilocal invariant of fibers containing multiple focus-focus points was introduced in~\cite{PT} and used in the extension of the classification of simple semitoric systems to all semitoric systems, simple or not~\cite{PPT-nonsimple}.
While focus-focus fibers in simple semitoric systems are always once-pinched tori, as in Figure~\ref{fig:pinchedtorus}, in non-simple systems it is possible to have many focus-focus points in the same fiber, leading to tori with multiple pinches, as in Figure~\ref{fig:multipinched}.

\begin{figure}
    \centering
    \includegraphics[width=0.25\linewidth, angle = 100]{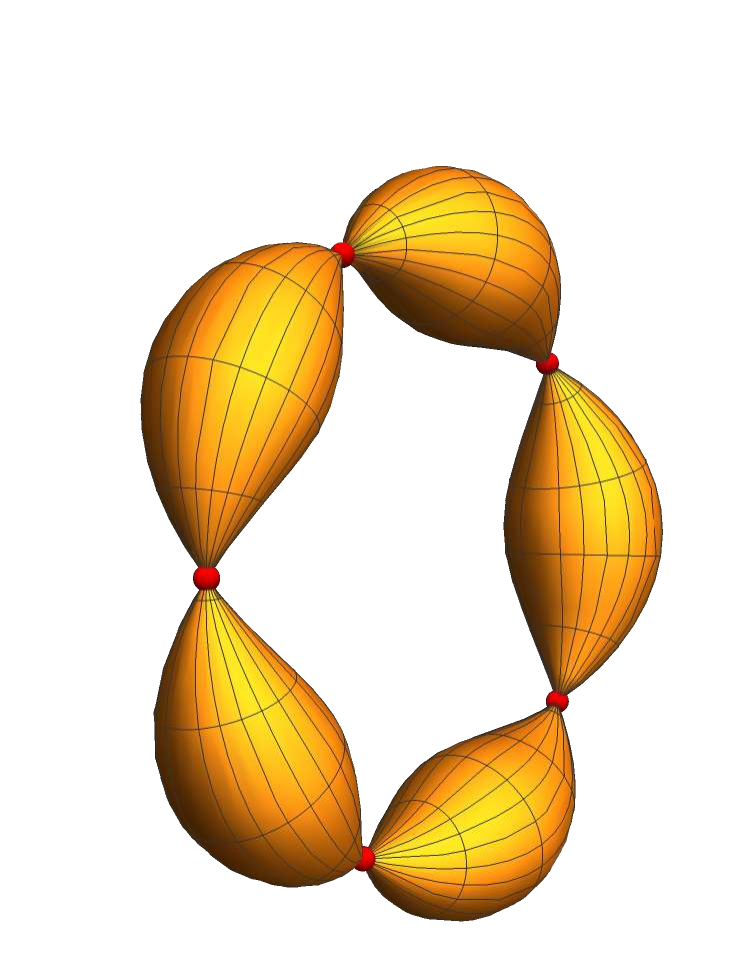}
    \caption{A fiber which includes five focus-focus points, and therefore is topologically a torus pinched five times.}
    \label{fig:multipinched}
\end{figure}

Here we will explain how to construct a single object which includes the information of both the Taylor series and twisting index type invariants for the non-simple case, since in this scenario the twisting index naturally interacts with the Taylor series.
We will use the notation of~\cite{PT} and~\cite{PPT-nonsimple}.
Let $n\in\Z_{>0}$, which will represent the number of focus-focus points in the fiber in question, and let $\Z_n = \Z/(n\Z)$. As above, we use $\R[[X,Y]]$ to denote Taylor series in $X,Y$, and now let $\R_+[[X,Y]]$ denote those Taylor series with no constant term and for which the coefficient of the linear $Y$ term is positive. Then we consider objects of the form $(\tilde{\mathsf{s}}_\mu,\mathsf{g}_{\mu,\nu})_{\mu,\nu\in\Z_n}$ such that: 
%$\tilde{\mathsf{s}}_\mu\in\R[[X,Y]]$, $\mathsf{g}_{\mu,\nu}\in\R_+[[X,Y]]$, and for all $\mu,\nu,\sigma\in\Z_n$ the following is satisfied:
% \begin{itemize}
%     \item $\tilde{\mathsf{s}}_\mu=\tilde{\mathsf{s}}_\nu(X,\mathsf{g}_{\mu,\nu}(X,Y))$
%     \item $\mathsf{g}_{\mu,\mu}(X,Y)=Y$
%     \item $\mathsf{g}_{\mu,\sigma}(X,Y) = \mathsf{g}_{\nu,\sigma}(X,\mathsf{g}_{\mu,\nu}(X,Y))$
% \end{itemize}
\begin{equation}\label{eqn:non-simple-cond}
    \begin{cases}
    \tilde{\mathsf{s}}_\mu\in\R[[X,Y]],\, \mathsf{g}_{\mu,\nu}\in\R_+[[X,Y]]\\[2pt] \tilde{\mathsf{s}}_\mu(X,Y)=\tilde{\mathsf{s}}_\nu(X,\mathsf{g}_{\mu,\nu}(X,Y))\\[2pt]
    \mathsf{g}_{\mu,\mu}(X,Y)=Y\\[2pt]
    \mathsf{g}_{\mu,\sigma}(X,Y) = \mathsf{g}_{\nu,\sigma}(X,\mathsf{g}_{\mu,\nu}(X,Y)).
    \end{cases}
\end{equation}
We include the tilde here on the $\tilde{\mathsf{s}}_\mu$ to indicate that, as above, the information of the twisting index is also included. 

Due to the relations in Equation~\eqref{eqn:non-simple-cond}, these Taylor series are over determined. For instance, \[\tilde{\mathsf{s}}_0, \mathsf{g}_{0,1},\mathsf{g}_{1,2},\ldots, \mathsf{g}_{n-2,n-1}\] determine the entire list. That is, given any choice of $\tilde{\mathsf{s}}_0\in\R[[X,Y]]$ and $\mathsf{g}_{\mu,\mu+1}\in\R_+[[X,Y]]$ for $\mu\in\Z_n\setminus\{n-1\}$, there exists exactly one way to extend to a collection $(\tilde{\mathsf{s}}_\mu,\mathsf{g}_{\mu,\nu})_{\mu,\nu\in\Z_n}$ 
%such that $\tilde{\mathsf{s}}_\mu\in\R[[X,Y]]$, $\mathsf{g}_{\mu,\nu}\in\R_+[[X,Y]]$ and satisfying 
which satisfies Equation~\eqref{eqn:non-simple-cond}.
Note that in the case that $n=1$ this means that this entire object reduces to the choice of $\tilde{\mathsf{s}}_0$, a single Taylor series. Removing the constant term from that Taylor series we obtain the information of the Taylor series $S^\infty$ from the simple case. We include the constant term here to agree with the conventions in~\cite{PT,PPT-nonsimple}, but it is not actually extra information once we apply these labels to the marked semitoric polygon, since we will end up requiring that the constant term agree with the height of the marked point.

Now we must describe the complete invariant in the non-simple case. In~\cite{PPT-nonsimple} the authors give two equivalent classifications, and here we will explain the invariant described in Section 5 of that paper (see~\cite[Theorem 5.3]{PPT-nonsimple}), since it more closely resembles our presentation of the classification for simple systems.

Let $(\De,\vec{c},\vec{\epsilon}\,)$ be a semitoric polygon representative, as in Definition~\ref{def:st-poly-repr}. In this case, we allow any number of the $c_i$ to coincide. Now let:
\begin{itemize}
    \item $m$ be the number of marked points,
    \item $v$ be the number of distinct marked points, i.e.~$v=|\{c_i\}_{i=1}^m|$,
    \item let $\{c_j'\}_{j=1}^v$ be the set of distinct values, so $\{c_j'\}_{j=1}^v=\{c_i\}_{i=1}^m$, the $c'_j$ are distinct, and we assume that the $c_j'$ are also in lexicographic order,
    \item for each $j$, let $m_j$ denote the size of $\{c_i \mid c_i = c_j'\}$. In other words, $m_j$ is the number of marked points which coincide at the $j^{\text{th}}$ value.
\end{itemize}
Note that $m = \sum_j m_j$.
Now, for each $j$, let $(\tilde{\mathsf{s}}_\mu^j,\mathsf{g}^j_{\mu,\nu})_{\mu,\nu\in\Z_{m_j}}$ be a tuple satisfying~\eqref{eqn:non-simple-cond}, and denote by $[\tilde{\mathsf{s}}_\mu^j,\mathsf{g}^j_{\mu,\nu}]_{\mu,\nu\in\Z_{m_j}}$ the orbit of $(\tilde{\mathsf{s}}_\mu^j,\mathsf{g}^j_{\mu,\nu})_{\mu,\nu\in\Z_{m_j}}$ under cyclic permutation of the indices. Then we call
\[
\Big((\De,\vec{c},\vec{\epsilon}\,),\big([\tilde{\mathsf{s}}_\mu^j,\mathsf{g}^j_{\mu,\nu}]_{\mu,\nu\in\Z_{m_j}}\big)_{j=1}^v\Big)
\]
a \emph{generalized marked polygon representative}. As before, we will need to take a quotient by a group action to obtain a well-defined invariant.
The group $\mathcal{T}\times (\Z_2)^m$ acts on generalized marked polygon representatives by
\begin{equation}\label{eqn:non-simple-group}
(\tau,\vec{\epsilon}\,')\cdot\Big((\De,\vec{c},\vec{\epsilon}\,),\big([\tilde{\mathsf{s}}_\mu^j,\mathsf{g}^j_{\mu,\nu}]_{\mu,\nu\in\Z_{m_j}}\big)_{j=1}^v\Big)=\Big((\tau,\vec{\epsilon}\,')\cdot(\De,\vec{c},\vec{\epsilon}\,),\big([(\tau,\vec{\epsilon}\,')\cdot \tilde{\mathsf{s}}_\mu^j,\mathsf{g}^j_{\mu,\nu}]_{\mu,\nu\in\Z_{m_j}}\big)_{j=1}^v\Big)
\end{equation}
where the action on the semitoric polygon representative is as in Equation~\eqref{eqn:stpoly-action} and the action of $(\tau,\vec{\epsilon}\,')$ on the Taylor series $\tilde{\mathsf{s}}_\mu^j$ is as follows:
let $a_j = \pi_1(c_j')$, so $a_j$ is the $x$-coordinate of the $j^{\text{th}}$ focus-focus value. Let $S_{\vec{\epsilon}\,'}^{j_0}=\{j \mid \varepsilon'_{j}=-1 \text{ and }a_j \leq a_{j_0}\}$. Then the action on the $j_0^{\text{th}}$ tuple of Taylor series is given by
\begin{align}\label{eqn:non-simple-ts-action}
( T^k+(0,b),\vec{\epsilon}\,')\cdot (\tilde{\mathsf{s}}_\mu^{j_0}) =
\tilde{\mathsf{s}}_\mu^{j_0} + (2\pi X + 2\pi a_{j_0}) k + 2\pi b + \sum_{j\in S_{\vec{\epsilon}\,'}}\left(2\pi X + 2\pi(a_j-a_{j_0})\right).
\end{align}
Here we use $T^k+(0,b)$ to denote the element of $\mathcal{T}$ which acts by $T^k$ and then translates the $y$-component by $b\in \R$. 

\begin{remark}
Note that the group action in Equation~\eqref{eqn:non-simple-ts-action} only changes the constant term and the coefficient of $X$ in the series $\widetilde{\mathsf{s}}_\mu^{j_0}$, and it only changes the coefficient of $X$ by multiples of $2\pi$. Thus, removing the constant term and taking the quotient by $2\pi X \Z$ would yield an object which is not effected by this group action.
\end{remark}

\begin{definition}\label{def:Z}
Let $\widetilde{\bf{Z}}$ be the set of all orbits $\Big[(\De,\vec{c},\vec{\epsilon}\,),\big([\tilde{\mathsf{s}}_\mu^j,\mathsf{g}^j_{\mu,\nu}]_{\mu,\nu\in\Z_{m_j}}\big)_{j=1}^v\Big]$ under the action of $\mathcal{T}\times (\Z_2)^m$ from Equation~\eqref{eqn:non-simple-group} such that
\begin{enumerate}
    \item $[\De,\vec{c},\vec{\epsilon}\,]$ is a marked semitoric polygon (see Definition~\ref{def:marked-st-poly}) where $m$, $v$, $m_j$, and $c_j
    $ are as defined above,
    \item $(\tilde{\mathsf{s}}_\mu^j,\mathsf{g}^j_{\mu,\nu})_{\mu,\nu\in\Z_{m_j}}$ satisfies the conditions of Equation~\eqref{eqn:non-simple-cond} for each $j=1,\ldots, v$.
    \item for each $j$, the $y$-value of $c_j'$ is equal to the constant term of $\tilde{\mathsf{s}}_\mu^j$ for all $\mu\in\Z_{m_j}$.
\end{enumerate}
We call $\tilde{\bf{Z}}$ the \emph{set of generalized semitoric ingredients}.
\end{definition}

Note that all of the marked points are distinct if and only if $m_j=1$ for all $j$, in which case $v=m$. As discussed above, if $m_j=1$ then the tuple of Taylor series reduces to the data of a single Taylor series, and in this case all of this information becomes equivalent to an element of $\bf{Y}$ (or equivalently, $\widetilde{\bf{Y}}$). That is, there is a natural injection $\bf{Y}\hookrightarrow \widetilde{\bf{Z}}$.

\begin{remark}
    In $\Tilde{\bf{Z}}$ we package the information of the twisting index in with the Taylor series, similar to how we did it above for $\Tilde{\bf{Y}}$ (which is why $\Tilde{\bf{Z}}$ also has a tilde). It is also possible to extract this information as a collection of integers for each focus-focus fiber, more similar to $\bf{Y}$, as described in~\cite[Proposition 6.2]{PPT-nonsimple}, but in the non-simple case it is much more natural to simply package these invariants together, so that is what we have done.
\end{remark}

\paragraph{Conclusion of Section~\ref{sec:st-invariants}}
In this section we have described three sets:
\begin{itemize}
    \item $\bf{Y}$, from Definition~\ref{def:Y}, is the collection of semitoric ingredients for semitoric systems with distinct marked points;
    \item $\widetilde{\bf{Y}}$, from Definition~\ref{def:Ytilde}, is similar to $\bf{Y}$ except that the twisting index and Taylor series invariants are packaged together. There is a natural bijection between $\bf{Y}$ and $\widetilde{\bf{Y}}$;
    \item $\widetilde{\bf{Z}}$, from Definition~\ref{def:Z}, is the collection of semitoric ingredients for general semitoric systems, covering both the cases of simple and non-simple. It naturally extends $\bf{Y}$ and $\widetilde{\bf{Y}}$.
\end{itemize}
So far, we have only abstractly defined the invariants. In the next section we will describe how to obtain these objects from a given semitoric system.

\subsection{Constructing the semitoric invariants from a  system}
\label{sec:st-construct}

For the duration of this section, we fix a semitoric system $(M,\om,F=(J,H))$, and we will outline the constructions of the semitoric invariants of this system. These invariants appeared in several papers~\cite{VN2003,VN2007} before being combined, along with another invariant, into the classification of simple semitoric systems by Pelayo and V\~{u} Ng\d{o}c~\cite{PeVN2009,PeVN2011}, which was later adapted into a classification of all semitoric systems~\cite{PPT-nonsimple}.

The marked semitoric polygon invariant is constructed in the same way if the system is simple or non-simple, and indeed the original construction of this object by V\~{u} Ng\d{o}c~\cite{VN2007} applied to both the simple and non-simple cases. The labels, on the other hand, differ between the simple and non-simple cases. In the simple case, each marked point is labeled by a Taylor series, as in~\cite{VN2003}, and an twisting index (which is an integer), as in~\cite{PeVN2009}. As discussed above, these two objects can be combined into a single Taylor series for each marked point (see Definition~\ref{def:Ytilde}). On the other hand, in the non-simple case, if several marked points coincide, corresponding to the case of several focus-focus singular points in a single fiber of the system, then they are labeled by a collection of Taylor series, as in~\cite{PT}, and the twisting index can be incorporated into these labels as in~\cite{PPT-nonsimple}.

\subsubsection{Constructing the marked semitoric polygon}
\label{sec:construct-poly}

As discussed above, for a compact toric integrable system, the image of the momentum map is a polytope whose dimension is half of the dimension of the system; if the system is four-dimensional it is a polygon. In a semitoric system $(M,\om,F)$, the image of the momentum map $F(M)\subset \R^2$ is generally not a polygon. In this section, we will describe a technique due to V\~{u} Ng\d{o}c~\cite{VN2007} to use the underlying integral affine structure of $F(M)$ to obtain a polygon. Certain choices made during the construction will affect the resulting polygon, so for the output to be well-defined we take an equivalence relation. The resulting family of polygons is a marked semitoric polygon, as in Definition~\ref{def:marked-st-poly}. We show the general idea in Figure~\ref{fig:cuts}.

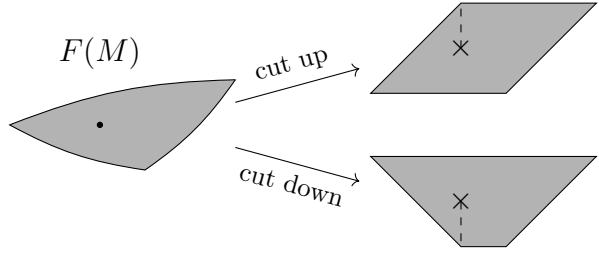
\begin{figure}
\centering
 \begin{tikzpicture}[scale=.6]
\filldraw[draw=gray!60,fill=gray!60] (0,0) node[anchor=north,color=black]{}
  -- (5,1) node[anchor=south,color=black]{}
  -- (3,-1) node[anchor=south,color=black]{}
  -- cycle;  
\draw[bend left=10, fill = gray!60] (0,0)to (5,1);
\draw[bend left=10, fill = gray!60] (5,1) to (3,-1);
\draw[bend left=10, fill = gray!60] (3,-1) to (0,0);
\fill[black] (2,0) circle (.07);

\draw (2,1) node[above] {$F(M)$};

\draw[->] (5.0, .5) -- node[above,sloped]{\footnotesize{cut up}} ++ (2.75, .75);
\draw[->] (5.0, -.5) -- node[below,sloped]{\footnotesize{cut down}} ++ (2.75, -.75);

\begin{scope}[xshift = 8cm, yshift = 1.7cm]
\filldraw[draw=black,fill=gray!60] (0,-1) node[anchor=north,color=black]{}
  -- (2,1) node[anchor=south,color=black]{}
  -- (5,1) node[anchor=south,color=black]{}
  -- (3,-1) node[anchor=south,color=black]{}
  -- cycle;
\draw (2,0) node {$\times$};
\draw[dashed] (2,0)--(2,1);
\end{scope}

\begin{scope}[xshift = 8cm, yshift = -1.7cm]
\filldraw[draw=black,fill=gray!60] (0,1) node[anchor=north,color=black]{}
  -- (5,1) node[anchor=south,color=black]{}
  -- (3,-1) node[anchor=south,color=black]{}
  -- (2,-1) node[anchor=south,color=black]{}
  -- cycle;
\draw (2,0) node {$\times$};
\draw[dashed] (2,0)--(2,-1);
\end{scope}
 \end{tikzpicture}
 \captionsetup{width=.9\linewidth}
 \caption{Straightening the affine structure of $F(M)$ to obtain semitoric polygon representatives.}
  \label{fig:cuts}
\end{figure}

The fibers of the momentum map in a semitoric system are connected~\cite{VN2007}, so we may associate the image $F(M)$ with the base of the fibration $B = M/\sim$ as in Section~\ref{sec:affine}.
Let $B_r\subset B = F(M)$ be the set of regular values of $F$.
In Section~\ref{sec:affine}, we described how $B_r$ inherits an integral affine structure, which typically does not agree with the integral affine structure of $\R^2$. 
To uncover the integral affine structure of $B_r$, and ``see'' it as in the toric case (as in Section \ref{sec:affine}), it would be nice to have a map $B_r\to\R^2$ which has the property that the pull back of the integral affine structure on $\R^2$ is equal to the integral affine structure on $B_r$. If the system has at least one focus-focus singular point, then there is monodromy (Definition~\ref{def:monodromy}) in the affine structure of $B_r$ which obstructs the existence of any such map.

Let $p_1,\ldots, p_m\in M$ denote the focus-focus points of $F$.
Recall that focus-focus values are the only critical values which can occur in the interior of the image of the momentum map for a semitoric system. Thus, $B_r = \mathrm{int}(B)\setminus \{F(p_1),\ldots,F(p_m)\}$.
The reason that this set can admit an integral affine structure with monodromy is that it is not simply-connected, and V\~{u} Ng\d{o}c's idea in this situation is to remove certain ``cuts'' from this set to obtain a simply connected subset, and therefore the integral affine structure restricted to this subset necessarily has trivial monodromy.

For $c\in\R^2$ and $\epsilon = \{-1,1\}$, as above we let $L_c^\epsilon$ denote the vertical ray starting at $c$ which goes up if $\epsilon=1$ and down if $\epsilon=-1$.  For a choice of $\vec{\epsilon}\in\{-1,1\}^m$, we consider the set of regular points with these cuts removed:
\[
 \widetilde{B}_r^{\vec{\epsilon}} := B_r\setminus\left( L^{\epsilon_1}_{F(p_1)} \cup \cdots \cup L^{\epsilon_m}_{F(p_m)} \right).
\]
In~\cite{VN2007}, V\~{u} Ng\d{o}c shows that for each $\vec{\epsilon}\in(\Z_2)^m$, there exists a map $g_{\vec{\epsilon}}\colon B \to \R^2$, which preserves the first coordinate and is a homeomorphism onto its image, which is a rational convex polyhedron, such that $g_{\vec{\epsilon}}|_{\widetilde{B}_r^{\vec{\epsilon}}}\colon \widetilde{B}_r^{\vec{\epsilon}} \to \R^2$ is a diffeomorphism onto its image that sends the integral affine structure of $\widetilde{B}_r^{\vec{\epsilon}}$ to the standard integral affine structure of $\R^2$. 
We will call such a map $g_{\vec{\epsilon}}$ a \emph{straightening map}\footnote{It is also sometimes called a \emph{cartographic homeomorphism}.}.

Let $\widetilde{M}^{\vec{\epsilon}} = F^{-1}(\widetilde{B}^{\vec{\epsilon}}_r\,)$.
An important observation here is that $( g_{\vec{\epsilon}}\circ F)|_{\widetilde{M}^{\vec{\epsilon}}}$ is the moment map for a Hamiltonian $2$-torus action on $\widetilde{M}^\epsilon$. This follows immediately from the fact that the pull-back of the standard integral affine structure on $\R^2$ by $g$ is equal to the integral affine structure on $B_r$.

We now have the following situation:
\[
\begin{tikzcd}[column sep = tiny]
M \arrow[r,phantom,"\supseteq"] \arrow[d,"F"']& \Tilde{M}^{\vec{\epsilon}}  \arrow[d, "{F}"'] \arrow[rrrd,"( g_{\vec{\epsilon}}\circ F)|_{\widetilde{M}^{\vec{\epsilon}}}"] & & &  & & & \\
F(M) \arrow[r,phantom,"\supseteq"]& \Tilde{B}_r^{\vec{\epsilon}} \arrow[rrr, "g_{\vec{\epsilon}}"'] & & & \R^2 && & 
\end{tikzcd}
\]

Understanding the monodromy of the integral affine structure around the image of a focus-focus point allowed V\~{u} Ng\d{o}c~\cite{VN2007} to understand the impact of changing cuts between up and down. 
Combining this with the observation that $( g_{\vec{\epsilon}}\circ F)|_{\widetilde{M}^{\vec{\epsilon}}}$ is the moment map for a Hamiltonian $2$-torus action on $\widetilde{M}^\epsilon$, Pelayo and V\~{u} Ng\d{o}c~\cite{PeVN2011} were able to determine certain restrictions on which polygons are possible (the restrictions in Definition~\ref{def:st-poly-repr}). 
Furthermore, in~\cite{PeVN2011} they also show that any such polygon can be obtained in this way from some semitoric system.
We summarize all of these results here:

\begin{theorem}[The marked semitoric polygon invariant]\label{thm:marked-poly}
    Let $(M,\om,F)$ be a semitoric system with $m\in\Z_{\geq 0}$ focus-focus singular points $p_1,\ldots,p_m\in M$. 
    Then:
    \begin{enumerate}
        \item for each $\vec{\epsilon}\in (\Z_2)^m$, there exists a straightening map $g_{\vec{\epsilon}}\colon F(M) \to \R^2$ such that:
    \begin{enumerate}
        \item the image $\De := (g_{\vec{\epsilon}}\circ F) (M)\subset \R^2$ is a convex rational polyhedron,
        \item $g_{\vec{\epsilon}}$ is a homeomorphism,
        \item $g_{\vec{\epsilon}}$ preserves the first coordinate, in the sense that there exists a $g_{\vec{\epsilon}}^{(2)}\colon \R^2\to\R$ such that $g_{\vec{\epsilon}}(x,y) = (x, g_{\vec{\epsilon}}^{(2)}(x,y))$,
        \item $g_{\vec{\epsilon}}|_{\widetilde{B}_{\vec{\epsilon}}}$ is a diffeomorphism onto its image such that the pull-back of the standard integral affine structure on $\R^2$ equals the integral affine structure on $\widetilde{B}_{\vec{\epsilon}}$.
    \end{enumerate}
    \item letting $c_k = g_{\vec{\epsilon}}\circ F(p_k)$ for $k\in\{1,\ldots, m\}$ and $\vec{c} = (c_1,\ldots, c_m)$, the triple $(\De,\vec{c},\vec{\epsilon}\,)$ is a semitoric polygon representative, as in Definition~\ref{def:st-poly-repr}.
    \item the set of all $(\De,\vec{c},\vec{\epsilon}\,)$ obtained in this way for some choice of $\vec{\epsilon}\in(\Z_2)^m$ and some $g_{\vec{\epsilon}}$ is exactly equal to the orbit $[\De,\vec{c},\vec{\epsilon}\,]$ under the action from Equation~\eqref{eqn:stpoly-action}, which is thus called \emph{the marked semitoric polygon invariant of $(M,\om,F)$}.
    \item If $(M,\om,F)$ and $(M',\om',F')$ are two semitoric systems which are isomorphic, then their marked semitoric polygon invariants are equal.
    \end{enumerate}
    Conversely, given any marked semitoric polygon $[\De,\vec{c},\vec{\epsilon}\,]$ as in Definition~\ref{def:marked-st-poly}, there exists a semitoric system $(M,\om,F)$ whose marked semitoric polygon invariant is $[\De,\vec{c},\vec{\epsilon}\,]$.
\end{theorem}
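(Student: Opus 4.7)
The plan is to decompose the theorem along its enumerated structure: (i) constructing a straightening map $g_{\vec\epsilon}$ with the listed properties, (ii) verifying the corner conditions that make $(\Delta,\vec c,\vec\epsilon)$ a semitoric polygon representative, (iii) identifying the full set of outputs with the orbit under $\mathcal T\times(\Z_2)^m$ and proving isomorphism invariance, and (iv) the converse realization statement. The underlying philosophy is that everything is governed by the integral affine structure on $B_r$ together with the monodromy of the focus-focus values, which have a rigid local form.

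For (i), I would first observe that $\widetilde B_r^{\vec\epsilon}$ is simply connected: $F(M)$ is contractible (the fibers of $F$ are connected by V\~u Ng\d oc's theorem, and the image of a semitoric momentum map is a topological disk in $\R^2$), while the upward or downward vertical rays $L^{\epsilon_k}_{F(p_k)}$ reach $\partial F(M)$ and thereby kill every loop around a focus-focus value. On a simply-connected integral affine manifold the developing map is well-defined, giving a local diffeomorphism $g_{\vec\epsilon}^\circ \colon \widetilde B_r^{\vec\epsilon}\to \R^2$ that intertwines the two integral affine structures. The first-coordinate constraint is rigidified by the fact that $J$ is already a globally defined action, with $2\pi$-periodic flow, so it must appear as one of the two components of any such developing map. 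The extension to all of $F(M)$, including the elliptic and elliptic-elliptic boundary singularities, is produced from the equivariant Eliasson normal form (Theorem~\ref{thm:eliasson}) combined with Remark~\ref{rmk:elliptic-actions} which yields local action coordinates near such points that glue smoothly with $g_{\vec\epsilon}^\circ$; convexity and rationality of the resulting polyhedron then follow from applying Atiyah--Guillemin--Sternberg to the Hamiltonian $\T^2$-action with moment map $(g_{\vec\epsilon}\circ F)|_{\widetilde M^{\vec\epsilon}}$.

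For (ii), regular vertices of $\Delta$ arise from elliptic-elliptic singular points, near which Eliasson provides an isomorphism with a toric model; this immediately forces the Delzant condition on the outgoing edge vectors. At vertices on a cut, I would compute the monodromy of the integral affine structure around a focus-focus value: the standard focus-focus model gives monodromy $T^k$, where $k$ is the number of focus-focus points in the fiber over that value. Matching the limiting edge vectors on the two sides of the cut under this $T^k$ monodromy yields precisely $\det(v_1,T^k v_2)\in\{0,1\}$, which is the fake or hidden corner condition. The hardest single step is this monodromy computation in the multi-pinched case: I would handle it by iterating the single focus-focus computation along the vertical fiber, using the nested semilocal models.

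For (iii), the construction has exactly two kinds of freedom: the sign choice $\vec\epsilon$ and the choice of developing map. A developing map on a simply-connected integral affine manifold is unique up to post-composition by an element of $\aff$, and imposing first-coordinate preservation cuts this group down to $\mathcal T$, producing the $\mathcal T$-factor of the action. Switching $\epsilon_k$ glues the portion of the polygon past $L^{\epsilon_k}_{c_k}$ by $T^{\pm 1}$, which is exactly the piecewise affine map $\mathfrak t^{u_k}_{\proj_1(c_k)}$ appearing in Equation~\eqref{eqn:stpoly-action}. Isomorphism invariance follows because a semitoric isomorphism $(\Phi,g)$ with $g(x,y)=(x,g_2(x,y))$ is a symplectomorphism respecting $J$, so it carries the integral affine structure on the base of one system to that of the other, and therefore produces matching straightening maps. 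Finally, for the converse (iv) I would first replace each fake or hidden vertex by its revealed Delzant analogue to obtain an honest Delzant polygon $\Delta'$ (this is exactly what the fake/hidden conditions were engineered to allow), invoke Delzant's construction (Example~\ref{ex:delzant}) to build a compact toric $(M_{\Delta'},\omega_{\Delta'},(J,H_0))$, and then deform $H_0$ by a Hamiltonian--Hopf bifurcation at each preimage of $c_k$ so as to create a focus-focus point there, possibly iterating to produce multi-pinched fibers in the non-simple case. The main obstacle of the entire theorem is confirming that the marked semitoric polygon of the resulting system is \emph{exactly} $[\Delta,\vec c,\vec\epsilon]$; this requires controlling the integral affine structure in a neighborhood of each newly-created focus-focus value and matching it to the prescribed one on $\Delta\setminus\bigcup_k L^{\epsilon_k}_{c_k}$, which is carried out using the techniques around semitoric families described later in the paper.
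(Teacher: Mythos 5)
Your outline for items (1)--(3) tracks the route actually taken in the cited papers: the developing map of the integral affine structure on the simply connected domain $\widetilde B_r^{\vec\epsilon}$ provides $g_{\vec\epsilon}$ once you fix its first component to be $J$; the Delzant condition at unmarked vertices follows from the Eliasson normal form at elliptic-elliptic points together with the observation that $(g_{\vec\epsilon}\circ F)|_{\widetilde M^{\vec\epsilon}}$ is a genuine torus momentum map; the fake/hidden conditions come from the $T^k$ monodromy of the affine structure around a focus-focus value with $k$ pinches; and the remaining freedoms (post-composition of the developing map by $\mathcal T$, and the sign choices $\vec\epsilon$ with the associated piecewise skewing $\mathfrak t_j$) are precisely what the $\mathcal T\times(\Z_2)^m$-action in Equation~\eqref{eqn:stpoly-action} records. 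One small caveat at (3): the second component $g_2$ of a semitoric isomorphism $(\Phi,g)$ need not itself be affine, so $(\Phi,g)$ does not preserve the standard affine structure on $\R^2$; what does the work is that $g'_{\vec\epsilon}\circ g$ is again a straightening map for the source system, so the two systems produce identical orbits.

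The genuine gap is in the converse (4). You claim you can replace the fake/hidden vertices ``by their revealed Delzant analogue to obtain an honest Delzant polygon $\Delta'$'' and then deform $H_0$ through Hamiltonian--Hopf bifurcations. This fails already for the strictly minimal marked polygon of Example~\ref{ex:st-poly-triangle} and Figure~\ref{fig:st-poly-triangle}: for either choice of cut direction, erasing the cut leaves a vertex that is not Delzant (Figure~\ref{fig:not-delzant}), so no polygon $\Delta'$ of the form you describe exists. The paper itself discusses exactly this obstruction in Section~\ref{sec:stfam-beyond}, where Lemma 3.14 of~\cite{LFPal} is used to show that systems with this marked polygon invariant cannot appear in a semitoric transition family at all; the workaround there (half semitoric transition families, allowing the $t>t^+$ end to become hypersemitoric) is an explicit-construction technique, and the paper is clear that it is \emph{not} a general algorithm for producing a semitoric system from an arbitrary marked polygon. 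The converse direction of Theorem~\ref{thm:marked-poly} is established in~\cite{PeVN2009,PeVN2011} (simple case) and~\cite{PPT-nonsimple} (general case) by a direct gluing construction: one builds local pieces over a decomposition of the polygon into toric chunks and neighborhoods of focus-focus fibers (the latter realized using the Taylor series semilocal normal form of~\cite{VN2003,PT}), and glues them along overlaps. The fake and hidden corner conditions are exactly the compatibility conditions making this gluing possible, with no need to ever extract a Delzant polygon, and that uniform construction is what you need here.
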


The above theorem can be interpreted as producing a surjection from the set of isomorphism classes of semitoric systems to the set of marked semitoric polygons. This surjection can be written as
\begin{equation}\label{eqn:M-to-De}
 (M,\om,F) \mapsto \Big[ g_{\vec{\epsilon}}\circ F(M), \big(g_{\vec{\epsilon}}\circ F(p_1),\ldots, g_{\vec{\epsilon}}\circ F(p_m)\big), \vec{\epsilon}\,\Big]
\end{equation}
where $\vec{\epsilon}$ is any choice of  $\vec{\epsilon}\in (\Z_2)^m$ and $g_{\vec{\epsilon}}$ is any choice of corresponding straightening map. The output of the map in Equation~\eqref{eqn:M-to-De} does not depend on these choices since it is an equivalence class of objects related by the action described in Equation~\eqref{eqn:stpoly-action}, and  that action is designed to compensate exactly for the impact of these choices on the resulting invariant.

Theorem~\ref{thm:marked-poly} combines results from several sources~\cite{VN2003,PeVN2009,PeVN2011,PPT-nonsimple}. In particular, the existence and properties of $g_{\vec{\epsilon}}$ was proved in~\cite{VN2007} for both simple and non-simple systems, the properties of the resulting polygons and the existence of a system for each such polygon was proven in~\cite{PeVN2009,PeVN2011} for simple semitoric systems, and these results were extended to non-simple semitoric systems in~\cite{PPT-nonsimple}.

\begin{remark}
The results in~\cite{VN2003,PeVN2009,PeVN2011} were actually only for unmarked semitoric polygons, which only record the $x$-component of the marked points, but to produce a more unified approach here we include the marked points on the polygons. In the original classification, the ``height'' of the marked points (vertical distance from the bottom of the polygon) was included as a separate invariant, which here is packaged together with the polygon to produce a marked polygon.
\end{remark}

\begin{remark}
    A similar object was introduced by Symington~\cite{Sy2003} for \emph{almost-toric fibrations}, which generalize semitoric integrable systems. In the polygons introduced by Symington, the cuts are not all constrained to be vertical, and indeed can show up at any rational slope. 
    See Section~\ref{sec:ATFs} and in particular Figure~\ref{fig:atf}.
    One way to think about it is that the slope of the cut is determined by the local $S^1$-action around the corresponding focus-focus point, and the fact that all cuts are vertical in a semitoric polygon is from the fact that we have prescribed that the first component of the momentum map generate a global $S^1$-action.
\end{remark}

\subsubsection{Constructing the labels}
\label{sec:construct-labels}

As mentioned several times above, while the marked semitoric polygon is an important invariant of a semitoric system, it is not a complete invariant. More precisely, for any choice of marked semitoric polygon which has at least one marked point, there are uncountably infinitely many non-isomorphic semitoric systems whose polygon invariant is the given polygon. Most of this extra freedom comes from the non-trivial invariants of an integrable system in the neighborhood of a fiber containing a focus-focus point; these are the \emph{Taylor series invariants} as discussed in~\cite{VN2003} for the case of one focus-focus point in the fiber and~\cite{PT} for the case of multiple focus-focus points in the same fiber.
Moreover, there is an additional invariant, called the \emph{twisting index}, representing a degree of freedom in how the neighborhood around the focus-focus fiber is glued into the semitoric system, as discussed in~\cite{PeVN2009,PeVN2011,AHP-twist} for the case of one focus-focus point in the fiber and~\cite{PPT-nonsimple} for the case of multiple focus-focus points in each fiber.

The invariants for systems with at most one focus-focus point in each fiber of the momentum map $F$ are a bit easier to work with compared with those for systems with arbitrary numbers of focus-focus points in each fiber, so we will deal with these two cases separately.

\paragraph{The Taylor series label for systems with at most one focus-focus point in each fiber.}Recall that in Proposition~\ref{prop:reg-action} we showed how to compute action variables near regular fibers by integrating a primitive of the symplectic form over a cycle, and in Remark~\ref{rmk:elliptic-actions} mentioned that due to Miranda-Zung~\cite{miranda-zung} this can also be accomplished around singular points with only regular and elliptic blocks. Now we want to do something similar for focus-focus points, using integrals over cycles to obtain invariants, but we will see that there are several difficulties that must be overcome.

For this subsection, suppose that $(M,\om,F=(J,H))$ has the property that there is at most one focus-focus singular point in each fiber of $F$. Note that this is slightly more general than the usual definition of simplicity, which requires at most one focus-focus point in each fiber of $J$.
For the remainder of this section, fix a focus-focus point $p\in M$.
Since we have assumed at most one focus-focus point in each fiber, $F^{-1}(F(p))$ is topologically a pinched torus, as in Figure~\ref{fig:pinchedtorus}, where $p$ is the pinched point.

Now, following~\cite{VN2003}, we outline how to construct the Taylor series invariant, which is an invariant of a neighborhood of a fiber containing a single focus-focus singular point.
Due to Theorem~\ref{thm:eliasson}, there are neighborhoods $U\subset M$ of $p$ and $V\subset \R^4$ of the origin, such that there exists a symplectomorphism $\phi\colon U \to V$ and a local diffeomorphism $g\colon \R^2\to\R^2$ such that $g \circ F = Q \circ \phi$
where
\[
 Q(x_1,x_2,y_1,y_2) = (x_1 y_2- x_2y_1, x_1 y_1+ x_2 y_2),
\]
so we have the following commutative diagram:
\[
\begin{tikzcd}[column sep = tiny]
M \arrow[r,phantom,"\supseteq"]& U \arrow[rrr, "\phi"] \arrow[d, "{F}"'] & & & V  \arrow[d, "Q"] &\R^4 \arrow[l,phantom,"\subseteq"]\\
& \mathbb{R}^2 \arrow[rrr, "g"] & & & \mathbb{R}^2 &
\end{tikzcd}
\]

Furthermore, since we are working in a semitoric system, as opposed to a general integrable system, there are extra conditions that we can place on $(\phi,g)$ to obtain a well-defined invariant in the end (V\~{u} Ng\d{o}c and Sepe~\cite{VNSepe} discuss how the freedom in choosing $(\phi,g)$ in general causes the Taylor series invariant to not be well-defined, but how orientation conventions can fix this issue in the semitoric case).
Recall that the first component $J$ of the map $F$ generates an effective $S^1$-action, so we may choose $(\phi,g)$ such that $g(x,y) = (x,g_2(x,y))$ where $\frac{\partial g_2}{\partial y}>0$.

\begin{remark}
    In fact, $g$ will always be of the form $g(x,y) = (\pm x, g_2(x,y))$ with $\frac{\partial g_2}{\partial y}\neq 0$, so to make sure that the invariant is well-defined we are essentially specifying the orientation of each component to both be positive. In~\cite[Section 5.2]{AHP-twist} the authors discuss how changing these signs impacts the invariant.
\end{remark}

This local form around the focus-focus point completely describes the behavior of the system in a neighborhood of $p$, but does not give all of the information about the behavior of the system in a neighborhood of the fiber $F^{-1}(F(p))$.
We will now see that there is another invariant in a neighborhood of $F^{-1}(F(p))$, which essentially measures the time it takes to flow around each level set of the torus in a direction complementary to the orbits of the $S^1$-action. 

By shrinking $U$ and $V$ if necessary, we may assume that both $U$ and $V$ are contractible.
Let $W = F^{-1}(F(U))$ and let
\begin{equation}\label{eqn:Phi}
    \Phi := g \circ F \colon W \to Q(V).
\end{equation}
Since $g$ preserves the first component, the first component of $\Phi$ is $J$, so we write $\Phi = (J,\Phi_2)$.

Identify $\R^2$ with $\C$ in the usual way, and let $\Lambda_z = \Phi^{-1}(z)$.
If $z\in \Phi(U)\setminus \{0\}$, then $\Lambda_z$ is a regular fiber, which is thus a 2-torus by the Liouville-Arnold-Mineur Theorem, Theorem~\ref{thm:LAM}.
For non-zero $z$, let $x\in \Lambda_z$ and note that the orbit of $x$ under the flow of $\mathcal{X}_J$ determines a loop on the torus, and the homology class of this loop is independent of the choice of $x$; we denote it by $[\gamma_J^z]\in H_1(\Lambda_z)$.
An important observation now is that the monodromy of the torus bundle for non-zero $z$ obstructs the existence of a smoothly varying choice of a complementary cycle in $H_1(\Lambda_z)$. We will overcome this difficulty by ``cutting'' through $\Phi(U)$ to produce a simply connected base (similar to the cutting we needed to perform to produce the marked semitoric polygon invariant).

Fix a choice of $\epsilon \in \Z_2$ and let $L_\epsilon$ be the ray starting at $F(p)$ which goes up if $\epsilon=1$ and down if $\epsilon=-1$.
Let $\widetilde{W} := F^{-1}(F(W)\setminus L_\epsilon)$.
Now $\Phi|_{\widetilde{W}}\colon \widetilde{W}\to g(F(W)\setminus L_\epsilon)$ is a $2$-torus bundle over a contractible set, and is thus a trivial bundle. We may choose an identification between $\widetilde{W}$ and $\Phi(\widetilde{W})\times (S^1)^2$ such that for each $z\in \Phi(\widetilde{W})$, the cycle $[\gamma_J^z]$ corresponds to the first basis element of $H_1(S^1\times S^1)$, and then for each $z\in \Phi(\widetilde{W})$ we define $[\gamma_2^z]\in H_1(\Lambda_z)$ to be the cycle corresponding to the second basis element.
Thus, we have obtained a basis $\{[\gamma_J^z],[\gamma_2^2]\}$ of $H_1(\Lambda_z)$ which varies continuously with $z\in \Phi(\widetilde{W})$. There is a freedom in the choice or orientation of $[\gamma_2^z]$, which we will fix later.

Let $\alpha$ be an $S^1$-invariant one-form on $W$ which is a primitive of $\omega$, so $\omega=\dd \alpha$ on $W$. Note that if $x\in\widetilde{W}$ then the integral of $\alpha$ around $[\gamma_J^{\Phi(x)}]$ is well defined (i.e.~independent of the choice of representative).
Now we claim
\[
 J(x)= J(p)+\frac{1}{2\pi}\oint_{\gamma_J^{\Phi(x)}}\alpha
\]
for all $x\in \widetilde{W}$.

\begin{proof}
 Let $\mathcal{X}$ denote the Hamiltonian vector field of $J$, and recall that $\mathcal{X}$ generates an $S^1$-action (with period $2\pi$). Since $\alpha$ is $S^1$ invariant, we know that $\mathcal{L}_{\mathcal{X}}\alpha =0$. Then, Cartan's formula and the fact that $\iota_\mathcal{X}\omega = -\dd J$ tell us that
 \[
   0=\mathcal{L}_\mathcal{X}\alpha = \dd(\iota_\mathcal{X}\alpha) + \iota_\mathcal{X}(\dd \alpha) = \dd(\alpha(\mathcal{X})) + \iota_\mathcal{X}\omega = \dd(\alpha(\mathcal{X}) - J)
 \]
 so $x\mapsto\alpha(\mathcal{X})_x-J(x)$ is a constant function. Since $p$ is a focus-focus point, $\mathcal{X}_p=0$ so $\alpha(\mathcal{X})=0$, so necessarily  $\alpha(\mathcal{X})_x-J(x)=-J(p)$. Now, let $x\in M$ and let $\gamma\colon S^1\to M$ be a parameterization of $S^1\cdot x$, the $S^1$-orbit through $x$. Then
 \[
  \int_\gamma \alpha = \int_{S^1}\gamma^*(\alpha) = \int_0^{2\pi}\alpha_{\gamma(\theta)}(\dot{\gamma}(\theta))\dd\theta = \int_0^{2\pi}(\alpha(\mathcal{X}))_{\gamma(\theta)}\dd\theta.
 \]
 Since $\alpha(\mathcal{X})_x = -J(p)+J(x)$, the equation above implies that
 \[
  \int_\gamma \alpha = \int_0^{2\pi}(-J(p)+J(\gamma(\theta)))\dd\theta = 2\pi(-J(p)+J(x))
 \]
 since $J$ is constant on the orbit $\gamma$.
\end{proof}

In fact, in $U$ we can give a formula for such a primitive, $\alpha = \phi^* \left(\sum x_i\dd y_i\right)$. Now we have one of the two action variables, which is actually just the one that was already given ($J$), and we would like to form the other by integrating over the complementary cycle.
Define $I\colon \Phi(\widetilde{W})\to\R$ by
\[
I(z) := \frac{1}{2\pi}\oint_{\gamma_2^z}\alpha.
\]
Again, this doesn't depend on the choice of representative of $[\gamma_2^z]$.
Now we will fix the orientation of $[\gamma_2^z]$ to be such that the derivative of $I$ with respect to the imaginary part of its input is strictly positive. 
We now have the following situation:
\[
\begin{tikzcd}[column sep = tiny]
M \arrow[r,phantom,"\supseteq"]& \widetilde{W}  \arrow[d, "{F}"'] \arrow[rrrd,"\Phi"] & & &  & & & \\
& \mathbb{R}^2 \arrow[rrr, "g"] & & & \Phi(\widetilde{W}) \arrow[d,phantom,"\subseteq"{rotate=-90}]   \arrow[rrr,"I"] && &\R\\&&&&\C&&& 
\end{tikzcd}
\]
The function $I$ cannot be smoothly extended from $\Phi(\widetilde{W})$ to all of $\Phi(W)$, but it can be continuously extended.

Speaking informally, there are two reasons why $I$ cannot be extended to a smooth function on $\Phi(W)$:
\begin{enumerate}[nosep]
    \item the value of $I$ jumps along the cut $L_\epsilon$,
    \item $I$ blows up as $z\to 0$,
\end{enumerate}
and both of these issues also exist for the complex logarithm. It turns out that a logarithm can be used to subtract them away. More precisely, let $\log$ denote a determination of the complex logarithm with branch cut along $L^\varepsilon$ and, for $z\in\Phi(\widetilde{W})$, define
\begin{equation}\label{eqn:S}
S(z):=2\pi I(z) -2\pi I(0) +\text{Im}(z\log(z)-z).
\end{equation}
Then V\~{u} Ng\d{o}c~\cite{VN2003} showed that $S(z)$ can be extended smoothly to a function on all of $\Phi(W)$, which we will also call $S$. This function $S(z)$ is sometimes called the desingularized or regularized action (see~\cite{PeVNsymplthy2011}, for instance).

Write $z=X+iY$ and view $S$ as a function of $X$ and $Y$, so $S(X,Y) = S(X+iY)$.
The function $S$ is not unique for several reasons:
\begin{itemize}
    \item $I$ depends on the choice of cycle $[\gamma_2^z]$ which was chosen to form basis with $[\gamma_J^z]$, so for any $k\in\Z$ the choice $[\gamma_2^z]+k[\gamma_J^z]$ will also work. Choosing a different such class changes $S$ by an integer multiple of $X$;
    \item $I$ depends on the choice of determination of log, which also is only defined up to the addition of $X\Z$;
    \item $S$ also depends on the choice of chart for the normal form of the focus-focus point ($U$, $\phi$, $V$, and $g$ from Theorem~\ref{thm:eliasson}). Different charts yield functions $S$ which differ by the addition of a smooth function for which all derivatives vanish when $z=0$, a so-called \emph{flat function}.
\end{itemize}

 Then the first two items above imply that $S$ is only well-defined up to the addition of $2\pi X \Z$, and the last item implies that the function $S$ itself is not an invariant - instead we have to consider its Taylor series. 
Let $S^\infty$ denote the Taylor series of $S$ at the origin, and note that Equation~\eqref{eqn:S} is designed so that $S(0)=0$.
Let $\R_0[[X,Y]]$ denote the set of power series in the variables $X$ and $Y$ with zero constant term.
Taking the Taylor series removes the impact of flat functions, and therefore taking the quotient by $2\pi X \Z$ is enough to make $S^\infty$ well-defined.
That is, by~\cite{VN2003},
\begin{equation}\label{eqn:taylor-series-invt}
S^\infty\in\R_0[[X,Y]]/(2\pi X\Z)
\end{equation}
is independent of all choices, and is called the \emph{Taylor series invariant} of the focus-focus point $p$.

Moreover, V\~{u} Ng\d{o}c showed that the semilocal structure near a focus-focus point is classified by this invariant~\cite{VN2003}. More precisely: 
\begin{itemize}
    \item for $i\in\{1,2\}$, let $(M_i,\om_i,F_i)$ be a 4-dimensional integrable system and let $p_i\in M_i$ be a focus-focus point with the property that it is the only focus-focus point in its fiber of $F_i$. Then there exists a fiber-preserving symplectomorphism from a neighborhood of $F^{-1}_1(F_1(p_1))$ to a neighborhood of $F^{-1}_2(F_2(p_2))$ if and only if the Taylor series invariant of $p_1$ is equal to the Taylor series invariant of $p_2$;
    \item Let $S^\infty\in\R_0[[X,Y]]/(2\pi X \Z)$. Then there exists an integrable system with a focus-focus point with $S^\infty$ as its Taylor series invariant.
\end{itemize}
In other words, this construction produces a bijection from the isomorphism class of semi-local neighborhoods of focus-focus points to $\R_0[[X,Y]]/(2\pi X \Z)$.

\begin{remark}
    There is another construction of the Taylor series invariant which is also due to V\~{u} Ng\d{o}c, in terms of the return time of certain Hamiltonian flows. Roughly, the idea is to choose a fiber $\Lambda_z$ near the focus-focus value $F(p)$, choose a point $x\in\Lambda_z$, and determine two values:
    \begin{itemize}[nosep]
        \item let $\tau_2(z)\in\R_{>0}$ be the time it takes to follow the flow of the Hamiltonian vector field of the second component of $\Phi=(J,\Phi_2)$ from $x$ until it returns to the orbit $S^1\cdot x$;
        \item let $\tau_1(z)\in\R/(2\pi \Z)$ denote the time it takes to follow the flow from that point back to $x$.
    \end{itemize}
    So here we are following the flow of $\mathcal{X}_{\Phi_2}$ and then the flow of $J$, producing a closed path from $p$ back to $p$. It turns out that the functions $\tau_1(z)$ and $\tau_2(z)$ are independent of the choice of $x$, and furthermore V\~{u} Ng\d{o}c~\cite{VN2003} showed that 
    \[
     \sigma_1(z):=\tau_1(z)-\text{Im}(\log z)\quad \text{ and }\quad  \sigma_2(z) := \tau_2(z)+\text{Re}(\log z)
    \]
    can be extended to smooth real-valued functions around the origin, taking any determination of $\log$ and choosing a lift of $\tau_1$ which is discontinuous along the same branch. It can be shown that $\sigma= \sigma_1\dd X + \sigma_2 \dd Y$ is exact, and we can therefore define $S$ to be the unique smooth function such that $\dd S = \sigma$ and $S(0)=0$. Now, again we take the Taylor series of this function to obtain $S^\infty$, and though there are choices in this process it produces a well defined element of $\R_0[[X,Y]]/(2\pi X \Z)$.
\end{remark}

\paragraph{The twisting index for systems with at most one focus-focus point in each fiber.} The twisting index invariant first appeared in~\cite{PeVN2009}, and it is related to how the semi-local neighborhood of a focus-focus point can be glued into a larger integrable system - in some sense it measures the interaction between the Taylor series invariant which encodes the semilocal structure and the polygon invariant which describes the global structure. 
There are now several (equivalent) ways to define this invariant, as discussed in~\cite{AHP-twist}.

The original definition, due to Pelayo and V\~{u} Ng\d{o}c~\cite{PeVN2009}, is in terms of a difference of momentum maps near a focus-focus point: one momentum map related to the polygon invariant (Section~\ref{sec:construct-poly}) and the other one a certain local preferred momentum map around the focus-focus point. In~\cite{AHP-twist}, the authors show that there are three other equivalent ways to describe the twisting index: as a comparison of actions, Taylor series, or cycles. In each case, the twisting index appears by comparing one object which is a preferred local object around the focus-focus point against an object which is obtained from the choice of polygon invariant. In this paper, we will define the twisting index as a comparison of cycles.

Fix a focus-focus point $p\in M$, and suppose that there are no other focus-focus points in the fiber $F^{-1}(F(p))$.
Let $m$ denote the number of focus-focus points of the system.
Let $[\De,\vec{c},\vec{\epsilon}\,]$ be a semitoric polygon associated to the system, so $\De = g_{\vec{\epsilon}}\circ F (M)$ for some choice of cuts $\vec{\epsilon}\in\{\pm 1\}^m$ and straightening map $g_{\vec{\epsilon}}$ as in Theorem~\ref{thm:marked-poly}.
Now, let \[\Psi^{\De} := (g_{\vec{\epsilon}}\circ F)|_{\widetilde{M}^{\vec{\epsilon}}}\colon \widetilde{M}^{\vec{\epsilon}}\to\R^2,\] where $\widetilde{M}_{\vec{\epsilon}}$ is $M$ with the preimages of the cuts removed, as discussed in Section~\ref{sec:construct-poly}. Let $\Psi^\De = (\Psi_1^\De,\Psi_2^\De)$.
Then, $\Psi^\De$ generates a Hamiltonian 2-torus action on $\widetilde{M}_{\vec{\epsilon}}$, and in particular each $\Psi_i^\De$ generates an $S^1$-action. Observe that $\Psi_1^\De=J$. For any regular fiber of $F|_{\widetilde{M}_{\vec{\epsilon}}}$, which is necessarily a torus, the orbits of the flows of $\Psi_1^\De$ and $\Psi_2^\De$ form a basis of $H_1$ of the fiber. The idea now is to compare this basis with a preferred one defined near the focus-focus point $p$.

Let $W$, $Q$, $V$, $\widetilde{W}$, and $\Phi \colon W\to Q(V)$ be as defined in the discussion around Equation~\eqref{eqn:Phi}. Recall that $\Phi = (J, \Phi_2)$ and the Hamiltonian flow of $J$ generates an $S^1$-action while the Hamiltonian flow of $\Phi_2$ in general just generates an $\R$-action. Let $z\in\Phi(\widetilde{W})\subseteq\C$ and consider the fiber $\Lambda_z=\Phi^{-1}(z)$, which is a torus. Let $x\in\Lambda_z$ and define a loop $\gamma_\text{pref}^z$ by performing the following steps:
\begin{enumerate}[noitemsep]
    \item start at $x$,
    \item follow the flow of $\mathcal{X}_{\Phi_2}$ until the first time that it encounters the orbit $S^1\cdot x$ (where the $S^1$-action is the one generated by the flow of $\mathcal{X}_J$),
    \item follow the flow of $\mathcal{X}_J$ until $x$ is encountered again.
\end{enumerate}
It turns out that $[\gamma_\text{pref}^z]\in H_1(\Lambda_z)$ is independent of the choice of $x$. Similarly, since the Hamiltonian flow of $\Psi_2^\De$ induces an $S^1$-action on $\Lambda_z$, given any $x\in\Lambda_z$ the orbit of the flow of $\mathcal{X}_{\Phi_2^\De}$ acting on $x$ determines an element of $H_1(\Lambda_z)$ which is independent of the choice of $x$. We will denote this orbit by $[\gamma_\De^z]$. Finally, let $[\gamma_J^z]\in H_1(\Lambda_z)$ be the cycle determined by the flow of $\mathcal{X}_J$.

Thus, we have obtained $[\gamma_\text{pref}^z],[\gamma_\De^z],[\gamma_J^z]\in H_1(\Lambda_z)$ such that:
\begin{enumerate}
    \item[(1)] $[\gamma_\text{pref}^z]$ is constructed from the local structure around the focus-focus fiber $F^{-1}(p)$,
    \item[(2)] $[\gamma_\De^z]$ is constructed from the choice of polygon $\De$,
    \item[(3)] $[\gamma_J^z]$ is determined by the given $S^1$-action generated by $J$,
    \item[(4)] both $\{[\gamma_J],[\gamma_\text{pref}^z]\}$ and $\{[\gamma_J],[\gamma_\De^z]\}$ form a basis of $H_1(\Lambda_z)\cong \Z^2$.
\end{enumerate}
By item (4), $[\gamma_\text{pref}^z]$ and $[\gamma_\De^z]$ must differ by an integer multiple of $[\gamma_J^z]$, and this integer is the final invariant of semitoric systems: the twisting index. It can be shown that this integer does not depend on the choice of $z$.

The \emph{twisting index} of the focus-focus point $p$ is the integer $\kappa^\De\in\Z$ such that
\begin{equation}\label{eqn:twisting-index-invt}
 [\gamma_\De^z] - [\gamma_\text{pref}^z]=\kappa^\De[\gamma_J^z]
\end{equation}
for all $z\in\Phi(\widetilde{W})$.
Again, this is not the original definition of the twisting index, but it was shown to be equivalent in~\cite{AHP-twist}.
Note that Equation~\eqref{eqn:twisting-index-invt} compares a intrinsic property of the system (the preferred cycle) to a cycle that comes from the choice of polytope.

\begin{remark}
Note that $[\gamma_\text{pref}^z]$ cannot be formed by considering the flow of the given Hamiltonian $H$, since in that case it would not be invariant under isomorphisms of integrable systems.
Instead, we must use the flow of $\Phi_2$, which comes from the local momentum map from the normal form around the focus-focus point, and in that case it turns out that the twisting index and Taylor series invariant are well-defined.
\end{remark}

\begin{remark}
    In Definition~\ref{def:Ytilde} in Section~\ref{sec:st-invariants} we describe an alternative way to encode the invariants of a semitoric system, in the set $\widetilde{\bf{Y}}$, in which we combine the twisting index and Taylor series invariants together. Given that in the present section we have now described how to construct each of these invariants separately, we can now simply use the map described after Definition~\ref{def:Ytilde} to obtain an element of $\widetilde{\bf{Y}}$. On the other hand, it is also possible to construct the Taylor series $\widetilde{S}^\infty_k$ directly from the system, which then naturally includes both the information of the Taylor series and twisting index invariants. This approach of packaging the twisting index with the Taylor series is used in~\cite{PPT-nonsimple,jaume-thesis,LFVN,AHP-twist}.
\end{remark}

Above we have obtained the Taylor series invariant (Equation~\eqref{eqn:taylor-series-invt}) and twisting index invariant (Equation~\eqref{eqn:twisting-index-invt}) for each focus-focus point of the system. The Taylor series and twisting index invariants for the system itself are simply the collection of these for each focus-focus point.

\paragraph{The labels for general systems.}
Now suppose that $(M,\om,F)$ is any semitoric system, so that it may have more than one focus-focus point in a single fiber of $F$. Pelayo and Tang~\cite{PT} showed how to generalize V\~{u} Ng\d{o}c's construction of the Taylor series invariant for a fiber containing exactly one focus-focus point (from~\cite{VN2003}) to the case of any finite number of focus-focus points in the same fiber.
In~\cite{PPT-nonsimple}, this was used to extend the classification of simple semitoric systems by Pelayo and V\~{u} Ng\d{o}c~\cite{PeVN2009,PeVN2011} to all semitoric systems, simple or not.

We will not describe this general construction in detail here, instead we will discuss the idea and refer the reader to \cite{PPT-nonsimple} for the details.
The construction of the marked semitoric polygon is the same as in the non-simple case, and for any fiber of $F$ with exactly one focus-focus point, the construction of the Taylor series is also the same as the non-simple case. The difference comes when constructing the Taylor series invariants for a fiber of $F$ with more than one focus-focus point, which is topologically a multi-pinched torus (such as the one shown in Figure~\ref{fig:multipinched}). 

First a note on the indexing. Given a focus-focus fiber which contains $m$ focus-focus points, there is no way to determine which is a preferred one to start the numbering on, and this is why the indexing is always only up to cyclic permutations, i.e.~we number them by elements of $\Z_m$ and quotient by cyclic permutation.

In the single-pinched case, the invariant is constructed by extending the local coordinates of the normal form of the focus-focus point (as in Theorem~\ref{thm:eliasson}) to a neighborhood of the entire fiber, and then examining the behavior of the integral of the cycles determined by the Hamiltonian flows of the components of the momentum map $\Phi$, see Equation~\eqref{eqn:Phi}. In the multi-pinched case, there is a normal form (from Theorem~\ref{thm:eliasson}) for \emph{each} focus-focus point on the fiber, and each of these gives different coordinates.
Now, V\~{u} Ng\d{o}c's idea (explained in~\cite{VN2003} and executed in~\cite{PT}) is to compare all of these different local coordinates around the focus-focus points. This is why we obtain a collection of Taylor series instead of a single Taylor series. The series $\tilde{\mathsf{s}}_\mu$ are constructed in a similar way to the single-pinched case, by fixing a single focus-focus point and considering an integral along a path around the fiber determined by the local normal form of that point, while the Taylor series $\mathsf{g}_{\mu,\nu}$ are related to comparing the normal form around the $\mu^{\text{th}}$ focus-focus point with the normal form around the $\nu^{\text{th}}$ focus-focus point.

\optional{could maybe say more here? Maybe not}

\paragraph{Conclusion of Section~\ref{sec:st-construct}}
In this section we have described how to construct invariants from a given semitoric system. Part of the results of~\cite{PeVN2009,PeVN2011} is that the constructions for simple systems in this subsection actually yield an element of $\bf{Y}$. That is, the objects constructed here satisfy all of the conditions to be a semitoric ingredient. Similarly, one of the results of~\cite{PPT-nonsimple} is that the constructions for non-simple systems yield an element of $\Tilde{\bf{Z}}$, that is, they are generalized semitoric ingredients.

\subsection{The classification theorems}
\label{sec:st-classify}

In Section~\ref{sec:st-invariants} we abstractly described the invariants of a semitoric system, in both the simple and non-simple cases, and in Section~\ref{sec:st-construct} we described how to obtain these invariants from a given system.
In this section, we state the results of~\cite{PeVN2009,PeVN2011} and~\cite{PPT-nonsimple}, which state that these invariants actually classify semitoric systems.

Recall the collections of invariants that we described in Section~\ref{sec:st-invariants}:
\begin{itemize}[noitemsep]
    \item $\bf{Y}$, from Definition~\ref{def:Y}. Elements of $\bf{Y}$ are equivalence classes of marked semitoric polygons with distinct marked points, where each marked point is labeled by a single Taylor series and integer;
    \item $\widetilde{\bf{Y}}$, from Definition~\ref{def:Ytilde}. Elements of $\widetilde{\bf{Y}}$ are equivalence classes of marked semitoric polygons with distinct marked points, where each marked point is labeled by a single Taylor series (which includes the information of the twisting index in one of its linear terms);
    \item $\Tilde{\bf{Z}}$, from Definition~\ref{def:Z}. Elements of $\widetilde{\bf{Z}}$ are equivalence classes of marked semitoric polygons, whose marked points may or may not be distinct, where each marked value (which may include several marked points) is labeled by a collection of Taylor series.
\end{itemize}

There is a natural bijection from $\bf{Y}$ to $\widetilde{\bf{Y}}$ (they are two different ways to encode the invariants of a simple semitoric system), and there are natural injections from $\bf{Y}$ and $\widetilde{\bf{Y}}$ into $\widetilde{\bf{Z}}$ (since $\widetilde{\bf{Z}}$ naturally extends the invariants to also include non-simple systems).

Let $\mathcal{M}_{\text{ST}}$ denote the set of isomorphism classes of semitoric systems. Let $\mathcal{M}_{\text{ST}}^{\text{distinct}}\subset  \mathcal{M}_{\text{ST}}$ denote the isomorphism classes of those systems for which each fiber of the momentum map $F$ contains at most one focus-focus point.

\begin{theorem}[Classification of $\mathcal{M}_{\text{ST}}^{\text{distinct}}$~\cite{PeVN2009,PeVN2011}]\label{thm:PVN}
    Semitoric systems with at most one focus-focus point in each fiber of their momentum map are classified by $\bf{Y}$. That is, the construction of invariants described for simple semitoric systems in Section~\ref{sec:st-construct} is a bijection from $\mathcal{M}_{\text{ST}}^{\text{distinct}}$ to $\bf{Y}$.
\end{theorem}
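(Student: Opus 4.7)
The plan is to establish the bijection by proving well-definedness, injectivity, and surjectivity of the construction map from Section~\ref{sec:st-construct}.

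\textbf{Well-definedness.} First I would verify that the construction actually produces an element of $\bf{Y}$, i.e.~that the output lies in the correct equivalence class regardless of the choices made along the way. For the marked semitoric polygon, this was already summarized in Theorem~\ref{thm:marked-poly}: the action of $\mathcal{T}\times(\Z_2)^m$ from Equation~\eqref{eqn:stpoly-action} is precisely designed to absorb the choice of $\vec{\epsilon}$ and straightening map $g_{\vec{\epsilon}}$. For the Taylor series invariant, I would check that replacing the Eliasson chart $(\phi,g)$ by another one (with the sign constraint $\partial g_2/\partial y>0$) only changes $S$ by a flat function, and that changing the complementary cycle $[\gamma_2^z]$ or the determination of $\log$ changes $S$ by an element of $2\pi X\Z$, so $S^\infty\in\R_0[[X,Y]]/(2\pi X\Z)$ is intrinsic. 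For the twisting index, I would check that $\kappa^\De$ transforms exactly by Equation~\eqref{eqn:action-k} under changing $\vec{\epsilon}$ or composing $g_{\vec{\epsilon}}$ with an element of $\mathcal{T}$, so that $\vec{\kappa}$ descends to a well-defined element of the orbit space.

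\textbf{Injectivity.} Given two systems $(M,\om,F)$ and $(M',\om',F')$ with the same invariants, I would construct a semitoric isomorphism in three stages. First, using V\~{u} Ng\d{o}c's semi-local classification~\cite{VN2003}, the equality of Taylor series invariants at each focus-focus value yields, for each focus-focus fiber, a fiber-preserving symplectomorphism between saturated neighborhoods $W_k,W_k'$ of the focus-focus fibers in $M$ and $M'$, intertwining $J$ with $J'$. Second, on the complements $M\setminus\bigcup_k F^{-1}(F(p_k))$ and the analogous set in $M'$, the map $g_{\vec\epsilon}\circ F$ endows each complement with a proper effective Hamiltonian $\T^2$-action sharing the same Delzant-type image $\De\setminus\bigcup L_{c_k}^{\epsilon_k}$. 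Applying the non-compact equivariant version of the Delzant classification (as in Karshon--Lerman~\cite{karshon-lerman}), I obtain an equivariant symplectomorphism between these complements compatible with the polygon data. Third, I would show that these two partial isomorphisms can be glued along the boundaries of the chosen neighborhoods; the freedom in the gluing is measured exactly by the twisting index, so the equality of $\vec{\kappa}$ allows us to adjust the local identifications so that the two partial isomorphisms match, producing the desired global semitoric isomorphism $(\Phi,g)$.

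\textbf{Surjectivity.} Given $[(\De,\vec c,\vec\epsilon\,),\vec S^\infty,\vec\kappa\,]\in\bf{Y}$, I would build a semitoric system realizing it by reversing the above. Start from the Delzant-type polytope $\De$ with cuts removed and apply a non-compact version of the Delzant construction (Example~\ref{ex:delzant}) to produce a symplectic manifold with a proper effective Hamiltonian $\T^2$-action whose moment image is $\De\setminus\bigcup L_{c_k}^{\epsilon_k}$. Separately, for each marked point $c_k$, construct a local model of a focus-focus fiber neighborhood realizing the Taylor series $S_k^\infty$; existence of such models is the second half of V\~{u} Ng\d{o}c's semi-local classification. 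Then glue these focus-focus neighborhoods into the toric piece along collar neighborhoods of the cuts, with the gluing prescribed by the twisting index $\kappa_k$ (which selects among the $\Z$-family of possible equivariant identifications of the boundary torus bundles). Finally, verify that the resulting system is a semitoric system whose invariants reproduce the given class; this amounts to checking that the constructions of Section~\ref{sec:st-construct}, when applied to what we just built, return precisely $[(\De,\vec c,\vec\epsilon\,),\vec S^\infty,\vec\kappa\,]$.

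\textbf{Main obstacle.} The hardest step is the gluing in both the injectivity and surjectivity arguments: one must patch a $\T^2$-equivariant symplectomorphism on the complement of the focus-focus fibers with a fiber-preserving symplectomorphism of a focus-focus neighborhood, despite the fact that only an $S^1$-action persists across the interface. Controlling this mismatch requires a careful choice of action-angle coordinates in the collar region that are simultaneously adapted to the preferred cycle $[\gamma_\text{pref}^z]$ near the focus-focus fiber and to the polygon cycle $[\gamma_\De^z]$; it is precisely here that the twisting index appears, and verifying that equality of twisting indices suffices for a global match is the technical heart of the theorem, relying crucially on the monodromy computation around focus-focus fibers developed in~\cite{VN2003,VN2007,PeVN2011}.
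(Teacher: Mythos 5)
The paper itself does not prove Theorem~\ref{thm:PVN}; it states the result with citations to~\cite{PeVN2009,PeVN2011}, and the remark immediately following it points out that the version stated here (at most one focus-focus point per fiber of $F$, rather than per fiber of $J$) is strictly stronger than the original and additionally relies on~\cite{PPT-nonsimple}. So there is no in-paper proof to compare against, and your proposal is a reconstruction of the cited argument. Your sketch does capture the correct overall architecture of that argument: well-definedness of the construction (absorbing the choices of $\vec\epsilon$, straightening map, Eliasson chart, complementary cycle, and branch of $\log$ into the group action on $\mathbf{Y}$), uniqueness by gluing focus-focus neighborhood isomorphisms to a fibration isomorphism on a $\T^2$-acted complement, and existence by reversing the gluing, with the twisting index governing the gluing freedom.

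However, the injectivity step as written contains a concrete error. You assert that on $M\setminus\bigcup_k F^{-1}\!\big(F(p_k)\big)$ — the complement of the focus-focus \emph{fibers} — the map $g_{\vec\epsilon}\circ F$ defines a proper effective Hamiltonian $\T^2$-action. This is false: the integral affine structure on $B_r$ has nontrivial monodromy around each focus-focus value, so no single-valued $\T^2$-momentum map exists on the preimage of a base that still encircles those values. As the paper explains in Section~\ref{sec:construct-poly}, the $\T^2$-action only exists on $\widetilde{M}^{\vec\epsilon} = F^{-1}\!\big(\widetilde{B}_r^{\vec\epsilon}\,\big)$, i.e.~the preimage of the base with the entire cuts $L^{\epsilon_k}_{F(p_k)}$ removed, not merely the focus-focus values. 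Your ``Main obstacle'' paragraph in fact implicitly corrects this by observing that only an $S^1$-action survives near the interface, but the two statements are inconsistent and the earlier one is the wrong one. This is not merely notational: once you remove the whole cut, $\widetilde{M}^{\vec\epsilon}$ and the focus-focus neighborhoods $W_k$ no longer cover $M$ by a simple collar overlap, and the gluing must be set up on the two sides of the cut inside $W_k\cap\widetilde{M}^{\vec\epsilon}$, with the twisting index recording precisely the mismatch between the preferred cycle from the Eliasson normal form and the polygon cycle on those pieces. You would also need to note that the image of $\widetilde{M}^{\vec\epsilon}$ under the straightened momentum map is a polygon with cuts removed rather than a genuine Delzant polytope, so the non-compact toric classification cannot be applied off the shelf and a more hands-on argument (as in~\cite{PeVN2009,PeVN2011}) is required. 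Finally, you leave unaddressed the extension from simple systems to all of $\mathcal{M}_{\text{ST}}^{\text{distinct}}$, which the paper attributes to~\cite{PPT-nonsimple}.
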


Since there is a natural bijection between $\bf{Y}$ and $\widetilde{\bf{Y}}$, Theorem~\ref{thm:PVN} also implies that semitoric systems are classified by $\widetilde{\bf{Y}}$.

\begin{remark}
 Recall that a semitoric system $(M,\om,F=(J,H))$ is called simple if there is at most one focus-focus point in each fiber of $J$.
 Theorem~\ref{thm:PVN} as we have stated it is actually more general than the original result from~\cite{PeVN2009,PeVN2011}, since the original results only apply to simple systems (at most one focus-focus point in each fiber of $J$), while we have stated it for all systems that have at most one focus-focus point in each fiber of $F$. The fact that the classification still works in this slightly more general case follows from the result in~\cite{PPT-nonsimple}.
\end{remark}

\begin{theorem}[Classification of all semitoric systems~\cite{PPT-nonsimple}]\label{thm:non-simple}
    All semitoric systems (simple or not) are classified by $\widetilde{\bf{Z}}$. 
    That is, the construction of invariants for general semitoric systems described in Section~\ref{sec:st-construct} is a bijection from $\mathcal{M}_{\text{ST}}$ to $\widetilde{\bf{Z}}$.
\end{theorem}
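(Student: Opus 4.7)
The plan is to follow the broad scheme of Pelayo–Vũ Ngọc's proof of Theorem~\ref{thm:PVN}, replacing the single-focus-focus Taylor series invariant of~\cite{VN2003} with the Pelayo–Tang semilocal invariant $[\tilde{\mathsf{s}}_\mu^j,\mathsf{g}_{\mu,\nu}^j]_{\mu,\nu\in\Z_{m_j}}$ of a multi-pinched fiber from~\cite{PT}. Since Section~\ref{sec:st-construct} already outlines the construction of invariants from a system, and Theorem~\ref{thm:marked-poly} gives the polygon invariant in both the simple and non-simple cases, the two main things that remain are (i) injectivity of the invariant map, i.e.~that two semitoric systems with equal invariants in $\widetilde{\bf{Z}}$ are isomorphic, and (ii) surjectivity, i.e.~that every element of $\widetilde{\bf{Z}}$ is realized by some semitoric system.

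For injectivity I would proceed as follows. Let $(M,\om,F)$ and $(M',\om',F')$ be two semitoric systems with equal invariants. Pick compatible polygon representatives and a common straightening map as in Theorem~\ref{thm:marked-poly}. The restrictions $g_{\vec\epsilon}\circ F|_{\widetilde M^{\vec\epsilon}}$ and $g_{\vec\epsilon}\circ F'|_{\widetilde{M}'^{\vec\epsilon}}$ are moment maps for Hamiltonian $\T^2$-actions on the complements of the preimages of the cuts, with image the same non-compact Delzant-like region. The Delzant-type uniqueness of non-compact toric systems (cf.~Karshon–Lerman~\cite{karshon-lerman}) produces an equivariant symplectomorphism between these complements. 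Near each focus-focus value $c_j'$, the Pelayo–Tang classification for multi-pinched fibers~\cite{PT} gives a fiber-preserving symplectomorphism of semilocal neighborhoods, because the invariants $[\tilde{\mathsf{s}}_\mu^j,\mathsf{g}_{\mu,\nu}^j]$ agree. The gluing of these two kinds of local isomorphisms across the cut is the content of the twisting index: the fact that the twisting index has been absorbed into the linear $X$-coefficient of $\tilde{\mathsf{s}}_\mu^j$ via Equation~\eqref{eqn:action-tilde-S} (and its non-simple generalization Equation~\eqref{eqn:non-simple-ts-action}) is precisely what guarantees the two local isomorphisms can be adjusted to agree on overlaps.

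For surjectivity I would build a semitoric system from a given orbit $\big[(\De,\vec{c},\vec{\epsilon}\,),([\tilde{\mathsf{s}}_\mu^j,\mathsf{g}_{\mu,\nu}^j]_{\mu,\nu\in\Z_{m_j}})_{j=1}^v\big]$ in two steps. First, Theorem~\ref{thm:marked-poly} produces a semitoric system $(M_0,\om_0,F_0)$ whose marked semitoric polygon invariant is $[\De,\vec{c},\vec{\epsilon}\,]$; this step does not require the Taylor series data. Second, for each focus-focus value $c_j'$, replace a neighborhood of the multi-pinched fiber in $M_0$ with the semilocal model constructed from $(\tilde{\mathsf{s}}_\mu^j,\mathsf{g}_{\mu,\nu}^j)_{\mu,\nu\in\Z_{m_j}}$ provided by the existence part of the Pelayo–Tang theorem. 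The compatibility conditions of Equation~\eqref{eqn:non-simple-cond} are exactly what make this semilocal model well-defined, and the last condition in Definition~\ref{def:Z}, which pins down the constant term of $\tilde{\mathsf{s}}_\mu^j$ to the height of $c_j'$, makes the gluing along the boundary of the neighborhood consistent with $F_0$.

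The main obstacle is handling the fibers with multiple focus-focus points. In the simple case each focus-focus value can be treated independently, and the semilocal invariant is a single Taylor series which is easily decoupled from the polygon. In the non-simple case, the local normal forms around the different focus-focus points of a single fiber overlap, and the over-determined tuple $(\tilde{\mathsf{s}}_\mu^j,\mathsf{g}_{\mu,\nu}^j)$ encodes exactly how these local pictures interact; the group action of Equation~\eqref{eqn:non-simple-ts-action} must then be reconciled with the admissible changes of the Pelayo–Tang semilocal data. Verifying that the gluing and realization both respect this bookkeeping — essentially showing that the equivalence relation on $\widetilde{\bf{Z}}$ is neither too coarse nor too fine — is the heart of the argument in~\cite{PPT-nonsimple}.
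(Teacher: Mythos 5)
The paper does not prove this theorem: it is stated as a black-box citation to~\cite{PPT-nonsimple}, so there is no in-paper argument to compare against. Taken on its own terms, your sketch captures the correct high-level architecture of the classification proof: obtain an equivariant identification over the cut complement where the system is ``toric-like,'' glue in the Pelayo--Tang semilocal models near the multi-pinched fibers, and use the twisting-index bookkeeping (absorbed into the linear $X$-coefficient of $\tilde{\mathsf{s}}_\mu^j$) to control the freedom in the gluing; you also correctly isolate the new difficulty in the non-simple case, namely that the semilocal data at a value $c_j'$ is an over-determined cyclic tuple rather than a single series.

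Two steps as written would need more care than you give them. First, your appeal to Karshon--Lerman for uniqueness of the toric piece over $\widetilde{B}_r^{\vec{\epsilon}}$ is not automatic: their classification of non-compact toric manifolds involves invariants beyond the momentum image (a sheaf-cohomological obstruction), and you would have to argue these vanish here; the original arguments in~\cite{PeVN2009,PeVN2011,PPT-nonsimple} instead carry out an explicit gluing of Liouville--Arnold--Mineur charts along the cuts rather than invoking a non-compact Delzant theorem off the shelf. Second, your surjectivity strategy first realizes the polygon via Theorem~\ref{thm:marked-poly} and then surgers in the correct semilocal models near the focus-focus fibers; this replacement must be shown both to preserve the polygon invariant and to realize the prescribed twisting data, neither of which is immediate, and the reference avoids this by assembling the system directly from compatible local pieces rather than by surgery on a pre-existing realization. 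These are gaps in rigor rather than conceptual errors, and they are essentially the technical content that~\cite{PPT-nonsimple} supplies.
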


Note that the three different sets of invariants that we have described ($\bf{Y}$, $\Tilde{\bf{Y}}$, and $\Tilde{\bf{Z}}$) are all based around attaching labels to the same basic object: the marked semitoric polygon.

\begin{corollary}\label{cor:semitoric-polygons}
    Every marked semitoric polygon can be obtained as the invariant from some semitoric system. In other words, the map from $\mathcal{M}_{\text{ST}}$ to the set of marked semitoric polygons defined by constructing the invariant as in Section~\ref{sec:construct-poly} is a surjection.
\end{corollary}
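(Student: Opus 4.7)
The plan is to deduce this corollary directly from the existence part of Theorem~\ref{thm:marked-poly}, and to give a second, more conceptual route via the full classification in Theorem~\ref{thm:non-simple}. Both routes end up producing an explicit semitoric system realizing any prescribed marked polygon, so there are essentially no new geometric difficulties; the point is mostly bookkeeping to check that the labels (Taylor series and twisting indices, or their non-simple counterparts) can always be chosen in a consistent way for an arbitrary marked polygon.

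First I would recall what is asserted in the last sentence of Theorem~\ref{thm:marked-poly}: for every marked semitoric polygon $[\De,\vec{c},\vec{\epsilon}\,]$, there is a semitoric system $(M,\om,F)$ whose marked polygon invariant is $[\De,\vec{c},\vec{\epsilon}\,]$. Applying the polygon construction of Section~\ref{sec:construct-poly} to this $(M,\om,F)$ then recovers $[\De,\vec{c},\vec{\epsilon}\,]$ by definition, so the map in question hits every marked semitoric polygon. This gives the corollary immediately.

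A second approach, which is perhaps more illuminating, is to pass through the classification $\mathcal{M}_{\text{ST}}\to \widetilde{\mathbf{Z}}$ of Theorem~\ref{thm:non-simple}. Given a marked polygon $[\De,\vec{c},\vec{\epsilon}\,]$, I build a generalized semitoric ingredient in $\widetilde{\mathbf{Z}}$ as in Definition~\ref{def:Z} by attaching the most trivial possible Taylor data to each coincident block of marked points: for the $j^{\text{th}}$ distinct value $c_j'$ with multiplicity $m_j$, take $\mathsf{g}_{\mu,\nu}^j(X,Y) = Y$ for all $\mu,\nu\in\Z_{m_j}$ and $\tilde{\mathsf{s}}_\mu^j(X,Y)$ equal to the constant equal to the $y$-coordinate of $c_j'$. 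It is immediate to verify that this satisfies the compatibility conditions of Equation~\eqref{eqn:non-simple-cond} and the constant-term normalization in Definition~\ref{def:Z}; moreover each $\mathsf{g}_{\mu,\nu}^j = Y$ lies in $\R_+[[X,Y]]$ since the coefficient of $Y$ is $1>0$. Now the surjectivity part of Theorem~\ref{thm:non-simple} produces a semitoric system whose full invariant in $\widetilde{\mathbf{Z}}$ is the one just constructed. Forgetting the labels and keeping only the marked polygon component shows that this system has marked polygon invariant $[\De,\vec{c},\vec{\epsilon}\,]$.

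There is no serious obstacle here: the only real thing to check is that one can always choose valid labels for any given marked polygon, and the trivial choice above works in full generality (including non-simple cases where several focus-focus points sit in the same fiber). In practice this is the reason the corollary is stated as a \emph{corollary}: the existence of the system is already buried in the ``converse'' halves of Theorems~\ref{thm:marked-poly} and~\ref{thm:non-simple}, and the corollary simply isolates the part of their content that does not involve the Taylor series or twisting index data.
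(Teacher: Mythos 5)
Your proposal is correct, and both routes you describe are valid. Since the paper does not give an explicit proof but places the corollary directly after Theorems~\ref{thm:marked-poly} and~\ref{thm:non-simple}, it is reading exactly as you describe: the last sentence of Theorem~\ref{thm:marked-poly} already asserts the existence of a system realizing any given marked polygon, so the corollary is immediate. Your second route, through the classification bijection $\mathcal{M}_{\text{ST}}\to\widetilde{\mathbf{Z}}$ of Theorem~\ref{thm:non-simple}, is a useful elaboration: it makes explicit that one can always attach labels (and you correctly verify that the trivial choice $\mathsf{g}^j_{\mu,\nu}=Y$, $\tilde{\mathsf{s}}^j_\mu$ constant satisfies Equation~\eqref{eqn:non-simple-cond} and the constant-term normalization of Definition~\ref{def:Z}, with $\mathsf{g}^j_{\mu,\nu}\in\R_+[[X,Y]]$ since its linear $Y$-coefficient is $1$). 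This second route is in fact closer in spirit to how the existence part of Theorem~\ref{thm:marked-poly} was originally established in~\cite{PeVN2011,PPT-nonsimple}, so your observation that it is ``more illuminating'' is well placed. No gaps.
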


Corollary~\ref{cor:semitoric-polygons} is useful because it helps make it easier to work directly with the polygons. Note that the surjection described in the corollary is not a bijection because of the existence of the Taylor series and twisting index invariants.

\begin{remark}
    It is not evident from the way that the invariants are constructed above, but the $S^1$-equivariant symplectomorphism type of a semitoric integrable system is independent of the labels. That is, it is completely encoded in the marked semitoric polygon (and in fact is even independent of the vertical positions of the marked points). There are several ways to see this, but the easiest is to apply~\cite{HSS2015} and note that the Karshon graph, which completely characterizes the underlying symplectic manifold and Hamiltonian $S^1$-action, can be read directly off of the marked semitoric polygon invariant (see Corollary~\ref{cor:S1-type} below).
\end{remark}

\begin{remark}\label{rmk:computations}
The invariants of semitoric systems were developed abstractly in~\cite{PeVN2009,PeVN2011} for simple systems and~\cite{PPT-nonsimple} in general, but actually computing the invariants (especially the Taylor series and twisting index invariants) in explicit systems turns out to be very involved.
Techniques to perform these calculations, along with the calculations themselves for certain systems, appear in~\cite{ADH-spin-osc,ADH-momenta,AH21, dullin13, LFP,AHP-twist}.
See also the survey~\cite{AH-survey}.
\end{remark}

\begin{remark}
    With the classifications Theorem~\ref{thm:PVN} and Theorem~\ref{thm:non-simple} in hand, it is possible to study the moduli space of semitoric systems by examining and deforming the invariants. This was done for toric systems in dimension four~\cite{PPRStoric}, for semitoric systems~\cite{PaSTMetric2015}, and for toric systems in all dimensions~\cite{PelayoSantos23}.
\end{remark}

\begin{remark}
    There exist various generalizations of this theory. For instance, Faithful semitoric systems~\cite{HSS-vertical}, $b$-semitoric systems~\cite{b-semitoric}, and proper semitoric systems~\cite{PeRaVN2015}.
\end{remark}

\subsection{The generalized coupled angular momenta example}
\label{sec:generalized-spins}

In Example~\ref{ex:coupledspins} we described the example of the coupled angular momenta system. Here we give a more general example which, depending on choices of parameters, can have zero, one, or two focus-focus points, and for certain parameters can even have two focus-focus points in the same fiber. This example first appeared in~\cite{HoPa2017}.

    Let $M=S^2\times S^2$ and choose parameters $R_1,R_2\in\R_{>0}$ with $R_1\leq R_2$. View $S^2\times S^2$ as a submanifold of $\R^3\times \R^3$ with coordinates $(x_1,y_1,z_1,x_2,y_2,z_2)$. Equip $M$ with symplectic form $\om =-(R_1\om_{S^2}\oplus R_2\om_{S^2})$ where $\om_{S^2}$ is the usual volume form on the sphere. For any choice of parameters $t_1,t_2,t_3,t_4$ let
    \begin{equation}\label{eqn:generalized-spins}
     \begin{cases}
         J  &= R_1z_1 + R_2 z_2;\\
         H  &= t_1z_1 + t_2 z_2 + t_3 (x_1x_2+y_1y_2)+t_4z_1z_2.
     \end{cases}
    \end{equation}
    Then $(M,\om,(J,H))$ is the \emph{generalized coupled angular momenta system} from~\cite{HoPa2017}. Note that taking $t_1=-t$, $t_2=0$, $t_3=t_4=t$, we obtain the usual coupled angular momenta system with parameter $t$ from Example~\ref{ex:coupledspins}.
    \begin{theorem}[Proposition 3.13 from~\cite{HoPa2017}]
     The generalized angular momenta system in Equation~\eqref{eqn:generalized-spins} is integrable for all choices of parameters $R_1,R_2\in\R_{>0}$ and $t_1,t_2,t_3,t_4\in\R$ as long as $t_3\neq 0$.   
    \end{theorem}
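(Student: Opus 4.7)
The plan is to directly verify the two defining conditions of a completely integrable system on the $4$-manifold $(M,\om)$: (a) the Poisson bracket $\{J,H\}$ vanishes identically, and (b) the differentials $\dd J$ and $\dd H$ are linearly independent on a dense open subset of $M$. Both conditions are polynomial in the ambient coordinates $(x_i,y_i,z_i)$, so once (b) holds at one point it holds on an open set whose complement has measure zero.

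For (a), I would split $H = H_{\mathrm{diag}} + H_{\mathrm{off}}$ with $H_{\mathrm{diag}} = t_1 z_1 + t_2 z_2 + t_4 z_1 z_2$ and $H_{\mathrm{off}} = t_3(x_1 x_2 + y_1 y_2)$. The Hamiltonian flow of $J$ is (up to the scalings $R_1, R_2$) the diagonal rotation of the two spheres about their vertical axes, since the components $R_i z_i$ generate rotations on each $(S^2,-R_i \omega_{S^2})$ factor. Both $z_i$ and the quantity $x_1 x_2 + y_1 y_2$ (being the Euclidean inner product of the horizontal projections of the two spin vectors) are invariant under \emph{simultaneous} rotation of both $xy$-planes, so $\{J,H\} = 0$. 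This rotational invariance argument avoids any dependence on the sign/scaling conventions built into $\om$.

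For (b), I would pass to local coordinates $(\phi_1,\theta_1,\phi_2,\theta_2)$ on the open dense subset where $z_i \neq \pm 1$, with $x_i = \sin\phi_i \cos\theta_i$, $y_i = \sin\phi_i \sin\theta_i$, $z_i = \cos\phi_i$. Then $\dd J$ has only $\dd\phi_1, \dd\phi_2$ components, and the terms $t_1 z_1 + t_2 z_2 + t_4 z_1 z_2$ in $H$ also contribute only $\dd\phi_i$ components. The crucial observation is that
\[
H_{\mathrm{off}} = t_3 \sin\phi_1 \sin\phi_2 \cos(\theta_1 - \theta_2),
\]
whose differential contains the term $-t_3 \sin\phi_1 \sin\phi_2 \sin(\theta_1 - \theta_2)\,\dd\theta_1$. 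Wherever this coefficient is nonzero — that is, on the open dense set where $t_3 \neq 0$, $\sin\phi_i \neq 0$, and $\theta_1 \not\equiv \theta_2 \pmod{\pi}$ — $\dd H$ has a $\dd\theta_1$-component while $\dd J$ does not, so $\dd H$ cannot be a scalar multiple of $\dd J$ and the two are linearly independent. This is precisely where the hypothesis $t_3 \neq 0$ enters: without it, $H$ depends only on $z_1, z_2$ and cannot supply a direction transverse to $\dd J$.

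Combining (a) and (b) shows that $(M,\om,(J,H))$ is completely integrable. I do not anticipate a substantive obstacle: both steps are direct computations. The only mild subtlety is the sign convention introduced by $\om = -(R_1\om_{S^2} \oplus R_2 \om_{S^2})$, which affects Poisson brackets but not the rotational invariance argument in (a), and which plays no role at all in (b) since linear independence of differentials is a purely smooth-manifold condition.
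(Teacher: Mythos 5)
The survey does not reproduce a proof of this statement; it simply cites Proposition~3.13 of~\cite{HoPa2017}, so there is no internal argument to compare against. Judged on its own terms, your proof is correct, and the approach is a clean one. The key observation in part~(a) — that the Hamiltonian flow of $J$ is the diagonal $S^1$-rotation of both spheres about their vertical axes (the factors $R_i$ in $J$ cancel against those in $\om$, so both spheres rotate at the same rate), and that each summand of $H$ is visibly invariant under this simultaneous rotation — replaces the direct Poisson-bracket computation one would otherwise do in coordinates. This is conceptually cleaner than grinding through $\{J,H\}$ term by term, and as you note, it is insensitive to the sign and scaling conventions built into~$\om$. Part~(b) is a straightforward computation in spherical coordinates, and the locus you identify ($t_3\neq 0$, $\sin\phi_i\neq 0$, $\theta_1-\theta_2\not\equiv 0\bmod\pi$) is indeed open and dense.

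One small point worth making explicit: the conclusion ``$\dd H$ is not a scalar multiple of $\dd J$ on this set'' upgrades to ``$\dd J$ and $\dd H$ are linearly independent'' only because $\dd J = -R_1\sin\phi_1\,\dd\phi_1 - R_2\sin\phi_2\,\dd\phi_2$ is itself nonzero wherever $\sin\phi_1,\sin\phi_2\neq 0$. You work on exactly this set, so the logic goes through, but the nonvanishing of $\dd J$ should be stated rather than left implicit, since linear independence of two covectors requires both to be nonzero.
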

     Note that in~\cite{HoPa2017} this is only stated for $R_1\neq R_2$, but this assumption is never used in the proof of integrability. Furthermore, note that this result is not a necessary and sufficient condition, since there exist choices of parameters for which the system is integrable but $t_3=0$.

     This system has different behavior depending on the choice of parameters. For instance:
     \begin{enumerate}
         \item If $t_1=R_1$ and $t_2=t_3=t_4=0$, then the system is toric for any $R_1,R_2\in\R_{>0}$. The Delzant polygon is shown in Figure~\ref{fig:gen-spins_toric}.
         \item If $t_1=t_3=t_4=\frac{1}{2}$ and $t_2=0$, then the system is semitoric with exactly one focus-focus point for any $R_1,R_2\in\R_{>0}$. A representative of the semitoric polygon is shown in Figure~\ref{fig:gen-spins_st}.
         \item If $R_1=1$, $R_2=2$, $t_1=t_2=\frac{1}{4}$, $t_3=\frac{1}{2}$, and $t_4=0$, then the system is semitoric with exactly two focus-focus points (Theorem 1.1 of~\cite{HoPa2017}). A representative of the semitoric polygon is shown in Figure~\ref{fig:gen-spins_st-2ff}.
         \item If $R_1=R_2=1$, $t_1=t_2=\frac{1}{4}$, $t_3=\frac{1}{2}$, and $t_4=0$, then the system is semitoric with exactly two focus-focus points, and both focus-focus points lie in the same fiber $(J,H)^{-1}(0,0)$, creating a double-pinched torus (this example is worked out in Section 1.3 of~\cite{PPT-nonsimple}). A representative of the semitoric polygon is shown in Figure~\ref{fig:gen-spins_non-simple}. See also Figure~\ref{fig:st-poly-non-simple}.
     \end{enumerate}

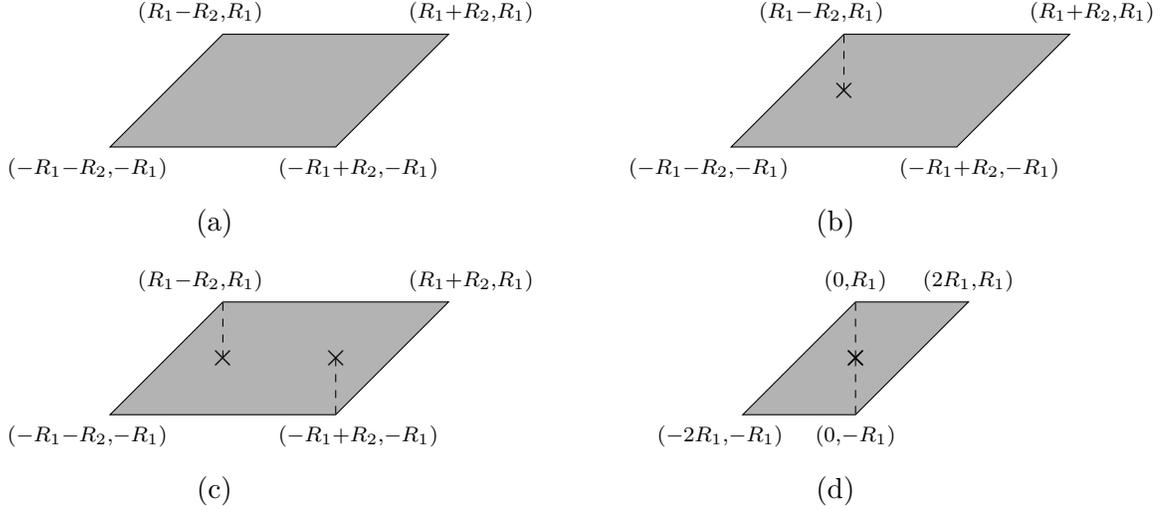
\begin{figure}
\begin{center}
\begin{subfigure}[b]{.35\linewidth}
\centering
\begin{tikzpicture}[scale=1.5]
\filldraw[draw=black, fill=gray!60] (0,0) 
  -- (1,1)
  -- (3,1)
  -- (2,0)
  -- cycle;
\draw (-.2,0) node[below] {$\scriptstyle{(-R_1-R_2,-R_1)}$};
\draw (.8,1) node[above] {$\scriptstyle{(R_1-R_2,R_1)}$};
\draw (3.2,1) node[above] {$\scriptstyle{(R_1+R_2,R_1)}$};
\draw (2.2,0) node[below] {$\scriptstyle{(-R_1+R_2,-R_1)}$};
\end{tikzpicture}
\caption{}
\label{fig:gen-spins_toric}
\end{subfigure}\qquad\qquad\qquad
\begin{subfigure}[b]{.35\linewidth}
\centering
\begin{tikzpicture}[scale=1.5]
\filldraw[draw=black, fill=gray!60] (0,0) 
  -- (1,1)
  -- (3,1)
  -- (2,0)
  -- cycle;
\draw (1,0.5) node {$\times$}; 
\draw [dashed] (1,0.5) -- (1,1); 
\draw (-.2,0) node[below] {$\scriptstyle{(-R_1-R_2,-R_1)}$};
\draw (.8,1) node[above] {$\scriptstyle{(R_1-R_2,R_1)}$};
\draw (3.2,1) node[above] {$\scriptstyle{(R_1+R_2,R_1)}$};
\draw (2.2,0) node[below] {$\scriptstyle{(-R_1+R_2,-R_1)}$};
\end{tikzpicture}
\caption{}
\label{fig:gen-spins_st}
  \end{subfigure}\\[6pt]
  \begin{subfigure}[b]{.35\linewidth}
\centering
\begin{tikzpicture}[scale=1.5]
\filldraw[draw=black, fill=gray!60] (0,0) 
  -- (1,1)
  -- (3,1)
  -- (2,0)
  -- cycle;
\draw (1,0.5) node {$\times$}; 
\draw [dashed] (1,0.5) -- (1,1); 
\draw (2,0.5) node {$\times$}; 
\draw [dashed] (2,0.5) -- (2,0); 
\draw (-.2,0) node[below] {$\scriptstyle{(-R_1-R_2,-R_1)}$};
\draw (.8,1) node[above] {$\scriptstyle{(R_1-R_2,R_1)}$};
\draw (3.2,1) node[above] {$\scriptstyle{(R_1+R_2,R_1)}$};
\draw (2.2,0) node[below] {$\scriptstyle{(-R_1+R_2,-R_1)}$};
\end{tikzpicture}
\caption{}
\label{fig:gen-spins_st-2ff}
\end{subfigure}\qquad\qquad\qquad
\begin{subfigure}[b]{.35\linewidth}
\centering
\begin{tikzpicture}[scale=1.5]
\filldraw[draw=black, fill=gray!60] (0,0) 
  -- (1,1)
  -- (2,1)
  -- (1,0)
  -- cycle;
\draw (1,0.5) node {$\times$}; 
\draw [dashed] (1,0.5) -- (1,1); 
\draw (1,0.5) node {$\times$}; 
\draw [dashed] (1,0.5) -- (1,0); 
\draw (-.2,0) node[below] {$\scriptstyle{(-2R_1,-R_1)}$};
\draw (1,1) node[above] {$\scriptstyle{(0,R_1)}$};
\draw (2,1) node[above] {$\scriptstyle{(2R_1,R_1)}$};
\draw (1,0) node[below] {$\scriptstyle{(0,-R_1)}$};
\end{tikzpicture}
\caption{}
\label{fig:gen-spins_non-simple}
  \end{subfigure}
\caption{Four examples of marked semitoric polygon representatives associated to the generalized coupled angular momenta system for different choices of parameters. Note that in the bottom right example there are two marked points which coincide, one with an upwards cut and one with a downwards cut (four representatives of a similar polygon appear in Figure~\ref{fig:st-poly-non-simple}).}
\end{center}
\end{figure}

%section 6
\section{Semitoric families}
\label{sec:st-families}

In this section, we give a quick overview of the techniques and results of the papers~\cite{LFPal,LFPal-SF2}, which focus on certain parameter-dependent families of integrable systems and techniques to construct explicit examples.
The plan for this section is:
\begin{itemize}[nosep]
    \item In Section~\ref{sec:stfam-motivate} we motivate the strategy we will employ in this section to construct semitoric integrable systems;
    \item In Section~\ref{sec:stfam-defs} we define the key objects of this section, one-parameter families of integrable systems called \emph{semitoric families} and \emph{semitoric transition families};
    \item In Section~\ref{sec:stfam-invariants} we discuss the behavior of semitoric invariants as the parameter varies, and discuss the first new constructions of integrable systems with these techniques;
    \item In Section~\ref{sec:stfam-beyond}, we discuss the limitations of the techniques from Section~\ref{sec:stfam-invariants}, and explain how to circumvent these difficulties to obtain even more new examples, such as an example on $\mathbb{CP}^2$ which transitions between being toric, semitoric, and hypersemitoric (see Section~\ref{sec:hst}) depending on the parameter;
    \item In Section~\ref{sec:strategy7}, we discuss in slightly more detail the strategies we used to come up with the explicit examples discussed throughout this section, and we point the reader to several useful results in~\cite[Section 7]{LFPal-SF2};
    \item In Section~\ref{sec:min-models}, we discuss the semitoric minimal model program, and how the techniques from this section were successfully used to complete the list of an explicitly constructed semitoric system corresponding to each strictly minimal polygon;
    \item Finally, in Section~\ref{sec:stfam-coupled-spins-again}, following~\cite{HoPa2017}, we take a certain two-parameter subset of the parameter space of the generalized coupled angular momenta system (Section~\ref{sec:generalized-spins}) to see it from the point of view of semitoric families.
\end{itemize}

\subsection{Motivation}
\label{sec:stfam-motivate}

The papers~\cite{LFPal,LFPal-SF2} cover many topics, but one of their main motivations is the following general question:

\begin{question}\label{Q:SF-general}
    Given a list (or partial list) of semitoric invariants, when can we construct an explicit semitoric system $(M,\om,(J,H))$ with those invariants?
\end{question}

The classification results of~\cite{PeVN2009,PeVN2011} and~\cite{PPT-nonsimple} (Theorems~\ref{thm:PVN} and~\ref{thm:non-simple}) prove that given any admissible list of invariants, there exists a unique semitoric system with those invariants.
The process to construct the system from the invariants, described in \cite{PeVN2009,PeVN2011} for generic semitoric systems and generalized to all semitoric systems in~\cite{PPT-nonsimple}, proceeds by constructing local pieces of the system and delicately gluing them together.
This proves that the system exists, and gives some insight into its structure (since these pieces are well-understood), but does not provide much help in finding a completely explicit description of the system in terms of global functions on familiar manifolds.
Compare this with the construction of a toric integrable system from the associated Delzant polytope via Delzant's construction~\cite{De1988}, reviewed in Section~\ref{sec:delz-construction}, which is extremely explicit. Of course, this added complexity
in the construction reflects the additional complexity of semitoric systems themselves, and in fact such global functions on familiar manifolds might not exist - so in some sense this extra difficulty is unavoidable.
On the other hand, in some cases it should be possible to write down a system from its invariants in a simple form.

More specifically, in~\cite{LFPal,LFPal-SF2}, we attacked a more precise version of Question~\ref{Q:SF-general}:

\begin{question}\label{question2}
    Given a marked semitoric polygon, find an explicit semitoric system with that as its marked semitoric polygon invariant.
\end{question}

The technique that we develop is heavily inspired by the coupled angular momenta system, discussed above in Example~\ref{ex:coupledspins}. Recall that in this example there is a family of integrable systems depending on a parameter, but the underlying symplectic manifold $(M,\om)$ and the Hamiltonian $J$ generating the $S^1$-action are both independent of the parameter, so only the other integral (the one generating the non-periodic flow) depends on the parameter.
That is, it is a one-parameter family of the form $(M,\om,F_t = (J,H_t))$ for $0\leq t \leq 1$.

\begin{figure}[ht]
\centering
\begin{subfigure}{.95\linewidth}
\centering
\scalebox{1}[.6]{\includegraphics[width=420pt]{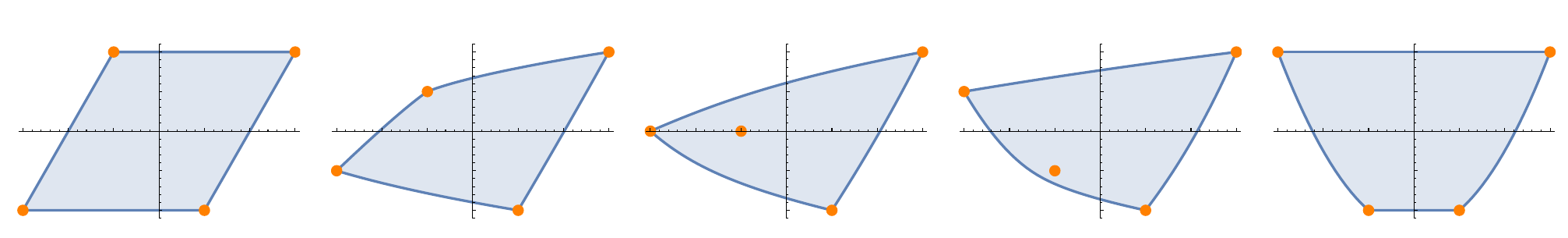}}
\caption{The momentum map image for the system as $t$ increases from $t=0$ to $t=1$.}
\label{fig:momentummapimage}
\end{subfigure}
\centering
\begin{subfigure}{.95\linewidth}
\centering
\begin{tikzpicture}[scale=.6]
\filldraw[draw=black, fill=gray!60] (0,0) node[anchor=north,color=black]{}
  -- (2,2) node[anchor=south,color=black]{}
  -- (5,2) node[anchor=south,color=black]{}
  -- (3,0) node[anchor=north,color=black]{}
  -- cycle;
\draw [dashed] (2,1) -- (2,2);
\draw (2,1) node[] {$\times$};
\begin{scope}[xshift = -2cm]
\filldraw[draw=black, fill=gray!60] (8,2) node[anchor=north,color=black]{}
  -- (13,2) node[anchor=south,color=black]{}
  -- (11,0) node[anchor=north,color=black]{}
  -- (10,0) node[anchor=south,color=black]{}
  -- cycle;
\draw [dashed] (10,0) -- (10,1);
\draw (10,1) node[] {$\times$};
\end{scope}
\end{tikzpicture}
\caption{Two representatives of the semitoric polygon of the system when $t=1/2$.}
\label{fig:stpoly}
\end{subfigure}
\caption{The momentum map image and semitoric polygons for the coupled angular momenta. Recall that the system is semitoric with one focus-focus point when $t=1/2$, and notice that the $t=0$ and $t=1$ momentum map images look similar to the semitoric polygons
 of the $t=1/2$ system.}
\label{fig:coupledangular}
\end{figure}

Figure~\ref{fig:coupledangular} shows the actual moment images of $F_t(M)$ as $t$ increases from $t=0$ to $t=1$, and compares them with two representatives of the semitoric polygon for the $t=\frac{1}{2}$ system.
The comparison of these two images is extremely leading: the images $F_0(M)$ and $F_1(M)$ look very similar to the two representatives of the semitoric polygon for the $t=\frac{1}{2}$ system\footnote{The $t=0$ is exactly equal to the polygon on the left, but the $t=1$ image is not a polygon (the edges are a bit bent), so it only resembles the polygon on the right.}.
This suggests that a semitoric system with a desired marked semitoric polygon invariant can, at least in some cases, be obtained by interpolating between systems related to representatives of the marked semitoric polygon invariant.
In other words, we'd like to perform this process backwards: start with a desired semitoric polygon and use two of its representatives to guess systems for parameters $t=0$ and $t=1$, somehow extrapolate this into a family of systems for $0\leq t \leq 1$, and (hopefully) find that the system for $t=\frac{1}{2}$ is semitoric with the desired marked semitoric polygon.

In fact, this technique has proven effective. Families of integrable systems of this type already existed in the coupled angular momenta system~\cite{SaZh1999,LFP} and its generalization~\cite{HoPa2017}, the general theory was developed in~\cite{LFPal,LFPal-SF2}, and the techniques developed therein were used in, for instance, \cite{HohMeu,GuHo}, to obtain new examples of integrable systems with certain desired polygons or configurations of focus-focus points.
One of the achievements of this program is the construction of an explicit semitoric system corresponding to each of the so-called ``strictly minimal'' semitoric polygons. Examples of these systems came from several papers~\cite{SaZh1999,LFP,HoPa2017,LFPal,LFPal-SF2}, and the list was finally completed in~\cite{LFPal-SF2}.
We discuss the semitoric minimal model program in Section~\ref{sec:min-models}.

\subsection{Semitoric families and semitoric transition families}
\label{sec:stfam-defs}

In this section we will describe some of the foundational definitions from~\cite{LFPal,LFPal-SF2}. A \emph{fixed $S^1$-family} is a family of integrable systems \[(M,\om,F_t=(J,H_t)) \text{ for }0\leq t \leq 1\] such that $\dim(M)=4$, $\mathcal{X}_{J}$ generates an effective $S^1$-action, and the map from $[0,1]\times M$ to $\R$ given by $(t,p)\mapsto H_t(p)$ is smooth.

\begin{definition}[{\cite[Definition 1.4]{LFPal}}]
    A \emph{semitoric family} with \emph{degenerate times} $t_1,\ldots,t_k\in[0,1]$ is a fixed-$S^1$ family $(M,\om,F_t)$ which is semitoric if and only if $t\notin\{t_1,\ldots,t_k\}$.
\end{definition}

As discussed above, semitoric families are useful for constructing explicit examples, but they are interesting in their own right as well.
To construct the examples we are interested in, it is necessary to understand how certain aspects of the system change when passing through the degenerate times and undergoing various bifurcations.
In~\cite[Section 3.2.2]{LFPal}, we describe the possible scenarios that can occur in a semitoric family around a degenerate time, and give examples of explicit systems which exhibit these behaviors. 
This is a small step towards the general goal of understanding bifurcations of integrable systems in the presence of a group action.

Let us now define the unmarked semitoric polygon invariant. Informally, it is exactly the data of the marked semitoric polygon invariant, except without the information of the height of each marked point (the horizontal position of each marked point is important though, since it impacts the corner conditions).
By~\cite[Lemma 3.7]{LFPal} the number of focus-focus points and the unmarked semitoric polygon invariant can only change at degenerate times in a semitoric family. To have a better understanding of how they change, we have to consider a more restrictive type of family.

\begin{definition}
    A \emph{semitoric transition family} with \emph{transition point} $p\in M$ and \emph{transition times} $t^-,t^+\in (0,1)$ is a semitoric family with degenerate times $t^-,t^+$ such that:
    \begin{itemize}
        \item $t^-<t^+$;
        \item for $t<t^-$ and $t>t^+$, the transition point $p$ is an elliptic-elliptic singular point;
        \item for $t^-<t<t^+$, the transition point $p$ is a focus-focus singular point;
        \item for $t\in\{t^-,t^+\}$, there are no degenerate singular points in $M\setminus\{p\}$;
        \item if $p$ is a maximum (resp.~minimum) of $H_0 |_{J^{-1}(J(p))}$, then $p$ is a minimum (resp.~maximum) of $H_1 |_{J^{-1}(J(p))}$.
    \end{itemize}
\end{definition}

The idea of the above definition is that as $t$ increases from $0$ to $1$, the value $F_t(p)$ starts on the top (or bottom) of the moment map image at an elliptic-elliptic value, passes though a degeneracy to become focus-focus, travels across the interior of the moment image as a focus-focus value, and then passes through a degeneracy again to become an elliptic-elliptic value on the opposite side of the moment image. This is modeling exactly the situation of the coupled angular momentum (see Example~\ref{ex:coupledspins} and Figure~\ref{fig:momentummapimage}).

\subsection{Invariants in semitoric families and first constructions}
\label{sec:stfam-invariants}

Now we want to understand how the invariants change with $t$ in a semitoric transition family.
For this section, fix a semitoric transition family $(M,\om,F_t)$, $0\leq t \leq 1$.
As mentioned above, by~\cite[Lemma 3.7]{LFPal} the number of focus-focus points and the unmarked semitoric polygon can only change at the degenerate times $t^-$ and $t^+$, and by the definition it's clear how the number of focus-focus points changes (it increases by 1 at $t^-$ and decreases by 1 at $t^+$). Let $t_0 \in (t^-,t^+)$. Since the unmarked semitoric polygon can only change at the two degenerate times, we can now compare the invariant for $t=0$, $t=t_0$, and $t=1$ to get a full understanding of the unmarked semitoric polygon for all $t\in[0,1]\setminus\{t^-,t^+\}$. Furthermore, suppose that the image of the transition point starts on the top boundary of the image of $M$ when $t=0$, and ends up on the bottom boundary when $t=1$.

To avoid having to define a lot of new notation, we will explain the result of~\cite[Lemma 3.14]{LFPal} in words. Recall that the $t=t_0$ system has one extra marked point, compared to $t=0$ and $t=1$, so there is one more cut which can be chosen to be up or down. Consider the set of all unmarked semitoric polygons for $t=t_0$ for which that cut is going upwards, and delete the cut from each one (but leave $\De$ unchanged, which will necessarily now have a vertex where the cut used to meet the top boundary). The resulting collection is the unmarked semitoric polygon invariant for the $t=0$ system. Similarly, to get the polygons for the $t=1$ system, start with the collection of polygons for $t=t_0$, choose the polygons which have the cut going downwards, and then delete the cut. See Figure~\ref{fig:poly-in-family}.

Roughly speaking, the main idea is that the set of unmarked semitoric polygons for the intermediate system ($t=t_0$) can be identified with the union of those polygons for the $t=0$ system with those polygons for the $t=1$ system, except that one of the cuts has been removed. Again, see Figure~\ref{fig:poly-in-family}.

\begin{figure}
    \centering
    \begin{tikzpicture}[scale = .75]

\draw[->] (4.4,1) -- (6.6,1);
\draw[->] (11.4,1) -- (13.6,1);

\draw (2,3.75) node[] {$t<t^-$};
\draw (9,3.75) node[] {$t^-<t<t^+$};
\draw (16,3.75) node[] {$t>t^+$};

\filldraw[draw=black, fill=gray!60] (0,0) node[anchor=north,color=black]{}
  -- (0,2) node[anchor=south,color=black]{}
  -- (2,2) node[anchor=south,color=black]{}
  -- (4,0) node[anchor=north,color=black]{}
  -- cycle;
\begin{scope}[xshift = 7cm, yshift = 1cm]
\filldraw[draw=black, fill=gray!60] (0,0) node[anchor=north,color=black]{}
  -- (0,2) node[anchor=south,color=black]{}
  -- (2,2) node[anchor=south,color=black]{}
  -- (4,0) node[anchor=north,color=black]{}
  -- cycle;
\draw [dashed] (2,1) -- (2,2);
\draw (2,1) node[] {$\times$};
\end{scope}

\begin{scope}[xshift = 7cm, yshift = -1.5cm]
\filldraw[draw=black, fill=gray!60] (0,0) node[anchor=north,color=black]{}
  -- (0,2) node[anchor=south,color=black]{}
  -- (4,2) node[anchor=south,color=black]{}
  -- (2,0) node[anchor=north,color=black]{}
  -- cycle;
\draw [dashed] (2,0) -- (2,1);
\draw (2,1) node[] {$\times$};
\end{scope}

\begin{scope}[xshift = 14cm]
\filldraw[draw=black, fill=gray!60] (0,0) node[anchor=north,color=black]{}
  -- (0,2) node[anchor=south,color=black]{}
  -- (4,2) node[anchor=south,color=black]{}
  -- (2,0) node[anchor=north,color=black]{}
  -- cycle;
\end{scope}
\end{tikzpicture}
    \caption{The marked semitoric polygons in a semitoric transition family can change at the degenerate times, and the (unmarked) polygons for $t^-<t<t^+$ are essentially the union of the set of semitoric polygons for $t<t^-$ and $t>t^+$. As $t$ increases from $t=t^-$ to $t=t^+$ in the above family, the marked point will travel from the top to the bottom of the polygon.}
    \label{fig:poly-in-family}
\end{figure}
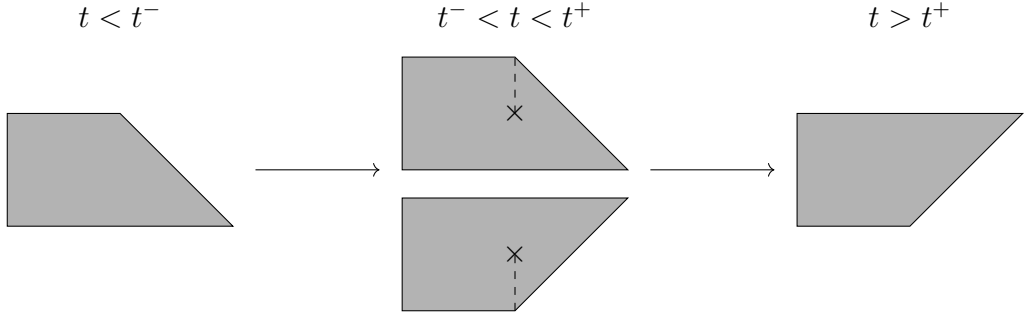

This result is in agreement with our experience with the coupled angular momenta system, shown in Figure~\ref{fig:coupledangular}. For $t<t^-$ and $t>t^+$, the system is a semitoric system with zero focus-focus points, which is called a \emph{toric type system}\footnote{For $t=0$ the coupled angular momenta system is actually honestly toric, not just toric type, but of course any toric system is also a toric type system.}. This means that the moment image can be straightened out to achieve a polygon, so it is no surprise that the images for $t=0$ and $t=1$ look similar to the polygons into which they can be straightened.

Given what we now know about the invariants of the systems in semitoric transition families, we can form a plan: in order to obtain a semitoric system with one focus-focus point and some given desired semitoric polygon, the strategy is to attempt to extrapolate between toric type systems whose polygons correspond to two of the choices of semitoric polygon (one with an upwards cut and one with a downwards cut). In~\cite{LFPal}, this strategy is used to obtain new examples of semitoric integrable systems, in families, on the first and second Hirzebruch surfaces which have various behavior (see Theorem 1.9 in that paper).
The techniques and results of~\cite{LFPal} were then applied to construct a semitoric system $(M,\om,(J,H_t))$ in~\cite{HohMeu} which has, for certain values $t$, four focus-focus points, and in particular has two double-pinched tori (as in~\cite{PPT-nonsimple}) for $t=1/2$.
More details about the strategy are discussed in Section~\ref{sec:strategy7}.

%In all of these examples, the $t=0$ system is taken to be toric (not just toric type) and therefore it can be constructed by Delzant's construction, as in Section~\ref{sec:delz-construction}. Since only the $H_t$ in a semitoric family $(M,\om,(J,H_t))$ actually depends on the parameter $t$, having a construction for when $t=0$ determines the symplectic manifold $(M,\om)$ and the Hamiltonian $J$ generating the $S^1$-action.
%After this, it only remains to come up with the function $H_t$, and in~\cite{LFPal} there are strategies to do this.

\subsection{Beyond semitoric families}
\label{sec:stfam-beyond}

Looking back at the description of~\cite[Lemma 3.14]{LFPal} above, we notice a subtle issue. 
Recall that representatives of the unmarked semitoric polygons for $t<t^-$ and $t>t^+$ are obtained by erasing cuts from representatives of the unmarked semitoric polygon for $t^-<t<t^+$.
As we saw in Section~\ref{sec:marked-st-poly}, the conditions for vertices on a cut are different than the conditions for those not on a cut, and so it may be that removing a cut yields an object which is no longer an unmarked semitoric polygon.
In fact, Figure~\ref{fig:st-poly-triangle} shows a valid marked semitoric polygon representative (described in Example~\ref{ex:st-poly-triangle}), but erasing the upwards cut produces the polygon from Figure~\ref{fig:not-delzant}, which does not satisfy the Delzant condition at its top vertex.

Thus, the result of~\cite[Lemma 3.14]{LFPal} implies that any semitoric system whose marked semitoric polygon invariant is the one generated by the representative from Example~\ref{ex:st-poly-triangle} \emph{cannot appear in a semitoric transition family}. If it did appear in such a family, then Lemma 3.14 implies that the polygons of either the $t=0$ or $t=1$ system in the family would not satisfy the conditions to be a semitoric polygon, which is impossible.

Therefore, while the techniques of~\cite{LFPal} were able to produce several new interesting examples, these techniques need to be generalized in order to achieve certain polygons. 
The obstruction that prohibits certain semitoric polygons from appearing in a semitoric system in a semitoric family is related to the relationship between the transition point and points of $M$ for which the $S^1$-action generated by $J$ has non-trivial isotropy, see~\cite[Section 5]{LFPal-SF2} for full details.
In short, if $(M,\om,F)$ is semitoric then any point in $M$ with non-trivial $S^1$-isotropy (points in so-called $\Z_k$-spheres, c.f.~\cite{karshon}) must get mapped to the boundary of $F(M)$, and this prohibits a focus-focus value in the interior from merging with the boundary there via a semitoric family.
The polygon from Example~\ref{ex:st-poly-triangle} in particular is an important example of a marked semitoric polygon, since it is one of the strictly minimal polygons (see Section~\ref{sec:min-models}), so we have to find a technique that works in this case.

Since the problem, very roughly speaking, occurs when the focus-focus value in the interior of $F(M)$ collides with the boundary of the moment image, to avoid this trouble we simply need to relax the requirement in semitoric transition families that the focus-focus value travels all of the way across. Furthermore, it turns out to be useful to allow non-semitoric systems for some values of $t$.

More specifically, a \emph{half semitoric transition family} with \emph{transition point $p\in M$} is a fixed-$S^1$ family $(M,\om,F_t)$ such that $p$ is semitoric for $t<t^-$ and $t^-<t<t^+$, but not necessarily for $t>t^+$. 
In particular, this allows us to avoid the situation that the transition point must interact with the singular points getting mapped to both the top and bottom boundaries of $F_t(M)$, since it is precisely on these boundaries that there may exist obstructions to the typical Hamiltonian-Hopf bifurcation that occurs in a semitoric transition family when the transition point changes from being focus-focus to elliptic-elliptic.
This allows for systems which become, for instance, \emph{hypersemitoric} when $t>t^+$. Hypersemitoric systems allow a wider variety of singular points compared to semitoric ones (they allow both hyperbolic blocks and certain well-behaved degenerate points), and in particular admit the existence of a configuration of singular points known as a \emph{flap}~\cite{EG-cusps, dullin-pelayo,HP-extend}. We discuss hypersemitoric systems more in Section~\ref{sec:hst}.

With these new concepts, and with some new strategies for constructing integrable systems (see Section~\ref{sec:strategy7}), we were able to obtain a completely explicit half semitoric transition family $(\mathbb{CP}^2,\om,F_t=(J,H_t))$, for $0\leq t \leq 1$, such that:
\begin{itemize}
    \item when $0\leq t < t^-$, the system is \textbf{semitoric with no focus-focus points} (i.e.~it is of \textbf{toric type}) ;
    \item when $t^-<t<t^+$, the system is \textbf{semitoric with one focus-focus point} (at $B=[0:0:1]$) with semitoric polygon as in Figure~\ref{fig:CP2-semitoric-poly};
    \item when $t^+<t\leq 1$, the system is \textbf{hypersemitoric} with one triangular flap with elliptic corner $F_t(B)$.
\end{itemize}
This system is described and studied in detail in~\cite[Theorem 8.1]{LFPal-SF2}.
Sketches of the Delzant polygon for the $t=0$ system and representatives of the marked semitoric polygon for $t\in (t^-,t^+)$ are shown in Figures~\ref{fig:CP2-toric-poly} and~\ref{fig:CP2-semitoric-poly}, respectively.
%Hypersemitoric systems generalize semitoric systems by allowing both hyperbolic points and certain well-behaved degenerate points, we discuss them in Section~\ref{sec:hst}.
These techniques were later applied in~\cite{GuHo} to produce various examples of semitoric and hypersemitoric systems.

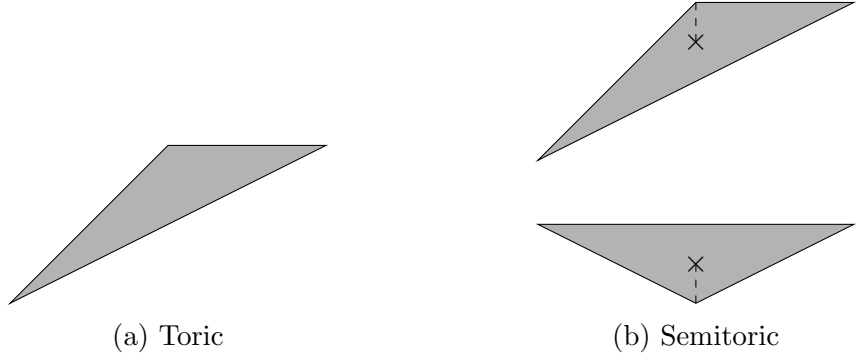
\begin{figure}
\begin{center}
\begin{subfigure}[b]{.30\linewidth}
\centering
\begin{tikzpicture}[scale=2.1]
\filldraw[draw=black, fill=gray!60] (0,0) 
  -- (1,1)
  -- (2,1)
  -- cycle;
\end{tikzpicture}
\caption{Toric}
\label{fig:CP2-toric-poly}
\end{subfigure}\qquad\quad
\begin{subfigure}[b]{.40\linewidth}
\centering
\begin{tikzpicture}[scale=2.1]
\filldraw[draw=black, fill=gray!60] (0,0) 
  -- (1,1)
  -- (2,1)
  -- cycle;
\draw (1,0.75) node {$\times$}; 
\draw [dashed] (1,0.75) -- (1,1); 
\begin{scope}[yshift = -40pt]
    \filldraw[draw=black, fill=gray!60] (0,1) 
  -- (1,.5)
  -- (2,1)
  -- cycle;
\draw (1,0.75) node {$\times$}; 
\draw [dashed] (1,0.75) -- (1,.5); 
\end{scope}
\end{tikzpicture}
\caption{Semitoric}
\label{fig:CP2-semitoric-poly}
  \end{subfigure}
\caption{Polygons related to the system discussed in Section~\ref{sec:stfam-beyond}. The Delzant polygon for the (toric) $t=0$ system is shown on the left and two of the representatives of the marked semitoric polygon for a choice of $t$ with $t^-<t<t^+$ (at which time the system is semitoric) are shown on the right.}
\end{center}
\end{figure}

\subsection{Strategies for constructing semitoric families and half-semitoric families}
\label{sec:strategy7}

The strategy for constructing these families is more than simply looking at the polygons. 
Typically, it is easiest to take at $t=0$ a system which is toric (not just toric type) and therefore can be constructed by Delzant's construction, as in Section~\ref{sec:delz-construction}. Since only the $H_t$ in a fixed-$S^1$ family $(M,\om,(J,H_t))$ depends on the parameter $t$, having a construction for $t=0$ determines the symplectic manifold $(M,\om)$ and the Hamiltonian $J$ generating the $S^1$-action.
After this, it only remains to come up with the function $H_t$ for $0<t\leq 1$, and in~\cite{LFPal,LFPal-SF2} there are strategies to do this.

The interested reader should consult Section 7 of~\cite{LFPal-SF2}, which describes in great detail the tools needed to develop these systems. Among other things, in~\cite[Section 7]{LFPal-SF2} we do the following:
\begin{itemize}
    \item Starting from the normal form of the Hamiltonian $S^1$-action generated by $J$ around a fixed point with weights $\{+1,-1\}$, we show how to write all of the possible local $H$ such that $(J,H)$ is integrable and $p$ is a critical point. We write this $H$ in terms of the coordinates of the normal form of $J$, and in term of the parameters in this description it is possible to immediately read off the type of the singular point (either elliptic-elliptic, focus-focus, or degenerate). We also discuss how changing these parameters can induce a Hamiltonian-Hopf bifurcation (in which a point changes between elliptic-elliptic and focus-focus type, by passing through a degeneracy). This helps with choosing a local expression for $H$, which in some cases can be extended to an appropriate global $H$.
    \item In Section 7 we also develop techniques to simplify verifying that a given candidate for a semitoric system is actually semitoric - which can otherwise be a lot of work, since in principle all singular points need to be checked to see if they are degenerate or hyperbolic.
\end{itemize}

We believe that the tools introduced in Section 7 of~\cite{LFPal-SF2} will be useful for anyone trying to construct or understand four-dimensional integrable systems which have an underlying $S^1$-action.

\subsection{The semitoric minimal model program}
\label{sec:min-models}

The techniques of~\cite{LFPal,LFPal-SF2} help with coming up with an explicit system from a given polygon, but this is still not a precise algorithm as in Delzant's construction for toric systems.
Thus, the most reasonable next step is to apply these techniques to certain foundational examples of marked semitoric polygons, and hopefully obtain explicit examples.
We therefore applied our strategy to a class of marked semitoric polygons from which all marked semitoric polygons can be constructed by a sequence of certain operations called corner and wall chops. These fundamental examples are the marked semitoric polygons called strictly minimal.

It is well-known that blowups can be performed on symplectic toric manifolds, and at the level of the polytope this amounts to performing an operation known as a \emph{corner chop}. This operation can be generalized to semitoric polygons.
In Figure~\ref{fig:corner-chop} we show the effect of the operation on an example of a marked semitoric polygon. Notice that the corner chop is impacted by the cut in the top representative, but in the bottom representative the part chopped off of the corner does not intersect the cut. This is the idea behind the operation: in at least one representative it behaves in the same way as the corner chop on toric systems, and that determines the impact on all other representatives. See Section 4 of~\cite{LFPal} for full details, and a description of how this is induced by a certain type of equivariant blowup on the manifold.
With this definition in hand, the results of~\cite{KPP2015,KPP2018} can be used to obtain a characterization of all \emph{minimal} semitoric polygons. That is, those polygons which can not be obtained from another marked semitoric polygon by a corner chop. Unfortunately, it isn't easy to use this characterization to obtain a concrete list of minimal polygons, and there seem to be too many marked semitoric polygons that are minimal in this sense (there are many minimal marked semitoric polygons that do not seem particularly fundamental).

\begin{figure}
\begin{center}
\begin{tikzpicture}[scale = .80]
\draw [->] (3,-0.5) -- (4,-0.5);
\draw [->] (8,-0.5) -- (9,-0.5);

\draw[decoration={brace,raise=5pt, amplitude = 5pt},decorate] (0,-3.2) -- (0,2.2);
\draw[decoration={brace,raise=5pt, amplitude = 5pt},decorate] ( 5,-3.2) -- (5,2.2);
\draw[decoration={brace,raise=5pt, amplitude = 5pt},decorate] ( 10,-3.2) -- (10,2.2);

\filldraw[draw=black, fill=gray!60] (0,0) node[anchor=north,color=black]{}
  -- (0,2) node[anchor=south,color=black]{}
  -- (1,2) node[anchor=south,color=black]{}
  -- (2,1) node[anchor=north,color=black]{}
  -- (2,0) node[anchor=north,color=black]{}
  -- cycle;
\draw [dashed] (1,1) -- (1,2);
\draw (1,1) node[] {$\times$};

\filldraw[draw=black, fill=gray!60] (5,0) node[anchor=north,color=black]{}
  -- (5,2) node[anchor=south,color=black]{}
  -- (6,2) node[anchor=south,color=black]{}
  -- (7,1) node[anchor=north,color=black]{}
  -- (7,0) node[anchor=north,color=black]{}
  -- cycle;
\draw [dashed] (6,1) -- (6,2);
\draw (6,1) node[] {$\times$};
  
\filldraw[draw=black, fill=gray!60, pattern=north east lines] (5,0.5) node[anchor=north,color=black]{}
  -- (5,2) node[anchor=south,color=black]{}
  -- (6,2) node[anchor=south,color=black]{}
  -- (6.5,1.5) node[anchor=north,color=black]{}
  -- (6,1.5) node[anchor=north,color=black]{}
  -- cycle;
  
\filldraw[draw=black, fill=gray!60] (10,0) node[anchor=north,color=black]{}
  -- (10,0.5) node[anchor=south,color=black]{}
  -- (11,1.5) node[anchor=south,color=black]{}
  -- (11.5,1.5) node[anchor=south,color=black]{}
  -- (12,1) node[anchor=north,color=black]{}
  -- (12,0) node[anchor=north,color=black]{}
  -- cycle;
\draw [dashed] (11,1) -- (11,1.5);
\draw (11,1) node[] {$\times$};
  
\filldraw[draw=black, fill=gray!60] (0,-3) node[anchor=north,color=black]{}
  -- (0,-1) node[anchor=south,color=black]{}
  -- (2,-1) node[anchor=north,color=black]{}
  -- (2,-2) node[anchor=north,color=black]{}
  -- (1,-3) node[anchor=south,color=black]{}
  -- cycle;
\draw [dashed] (1,-2) -- (1,-3);
\draw (1,-2) node[] {$\times$};

\filldraw[draw=black, fill=gray!60] (5,-3) node[anchor=north,color=black]{}
  -- (5,-1) node[anchor=south,color=black]{}
  -- (7,-1) node[anchor=north,color=black]{}
  -- (7,-2) node[anchor=north,color=black]{}
  -- (6,-3) node[anchor=south,color=black]{}
  -- cycle;
\draw [dashed] (6,-2) -- (6,-3);
\draw (6,-2) node[] {$\times$};
  
\filldraw[draw=black, fill=gray!60, pattern=north east lines] (5,-2.5) node[anchor=north,color=black]{}
  -- (5,-1) node[anchor=south,color=black]{}
  -- (6.5,-1) node[anchor=north,color=black]{}
  -- (6,-1.5) node[anchor=north,color=black]{}
  -- cycle;
  
\filldraw[draw=black, fill=gray!60] (10,-3) node[anchor=north,color=black]{}
  -- (10,-2.5) node[anchor=south,color=black]{}
  -- (11.5,-1) node[anchor=south,color=black]{}
  -- (12,-1) node[anchor=north,color=black]{}
  -- (12,-2) node[anchor=north,color=black]{}
  -- (11,-3) node[anchor=south,color=black]{}
  -- cycle;  
\draw [dashed] (11,-2) -- (11,-3);
\draw (11,-2) node[] {$\times$};

\end{tikzpicture}
\end{center}
\caption{An example of performing a corner chop on a marked semitoric polygon, showing two representatives. On the left are the original representatives, in the middle we have shaded in the area to be removed, and on the right are the resulting representatives after performing the corner chop. See~\cite[Section 4]{LFPal} for more details.}
\label{fig:corner-chop}
\end{figure}
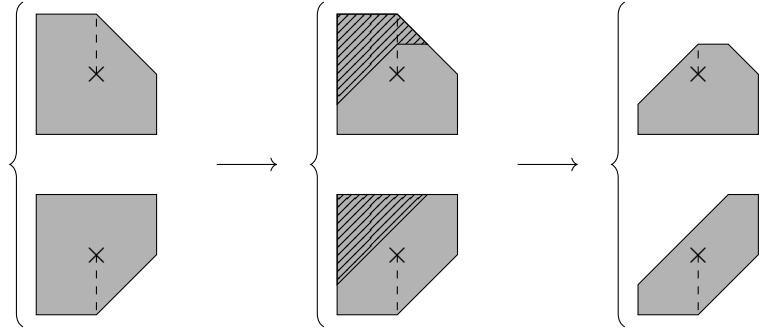

There is another operation on marked semitoric polygons that can further reduce the list of minimal models, though. This operation on polygons, called a \emph{wall chop} in~\cite{LFPal-SF2}, is the impact on the marked polygon invariant of an operation on the semitoric system called a \emph{semitoric blowup}, first studied by Zung~\cite{Zung03} and Symington~\cite{Sy2003} for almost toric fibrations, and being further explored in the semitoric case by Hohloch-Sabatini-Sepe-Symington in an upcoming project~\cite{HSSS-st-lift}. 
Even without working directly on the integrable system, it is possible to simply rely on the description in terms of the marked semitoric polygon and apply Corollary~\ref{cor:semitoric-polygons} to recover a system from the polygon if necessary. 
This is the strategy taken in~\cite[2.10.2]{HP-extend} and~\cite[3.1.4]{LFPal-SF2}, where this operation is described in detail and studied by working only with the polygon invariant\footnote{The upcoming paper by Hohloch-Sabatini-Sepe-Symington~\cite{HSSS-st-lift} is expected to work out the details of this operation on the manifold, which is more delicate (and rewarding) than simply considering the polygon as we do here. For the purposes of~\cite{HP-extend} and~\cite{LFPal-SF2}, knowing the impact on the polygon invariant suffices, but a better understanding of the operation geometrically on the manifold is desirable.}.
In~\cite[Theorem 4.8]{LFPal-SF2}, the authors use the results of~\cite{KPP2015,KPP2018} and the wall chop operation to obtain a finite list of classes of those marked semitoric polygons which are \emph{strictly minimal}, in the sense that they cannot be obtained from another marked semitoric polygon by either the corner chop or wall chop operation. Representatives of all minimal polygons are shown in Figure~\ref{fig:min-polys}.
We refer the reader to~\cite{LFPal} for a detailed description of the corner chop (and associated blowup at the level of the manifold) and to~\cite{HP-extend,LFPal-SF2} for a discussion of the wall-chop operation on the polygon.

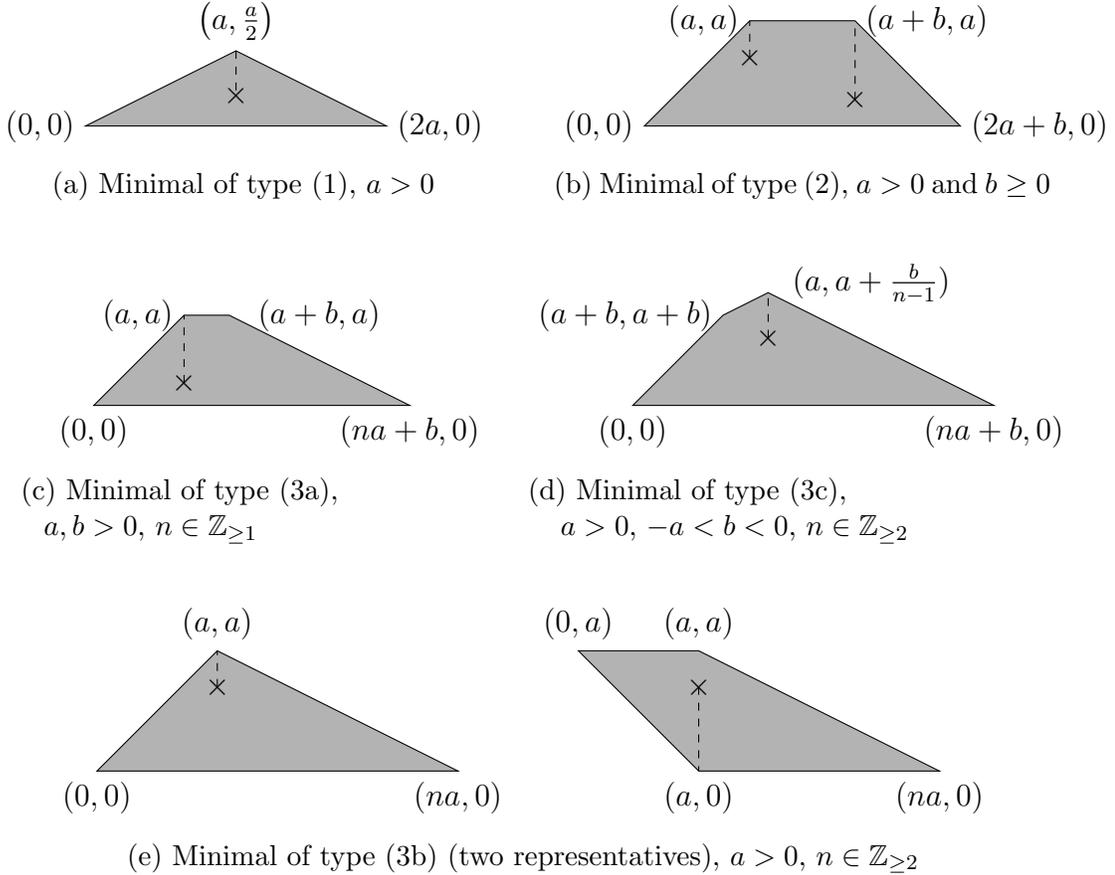
\begin{figure}[ht]
    \centering
    \begin{subfigure}{.4\textwidth}
    \centering
    \begin{tikzpicture}[scale = 1]
        \draw[draw=black, fill=gray!60] (0,0) -- (2,1) -- (4,0) -- cycle;
        \node at (2,0.4) {$\times$};
        \draw [dashed](2,1)--(2,0.4);
        \node at (0,0) [anchor = east] {$(0,0)$};
        \node at (2,1) [anchor = south] {$\left(a,\frac{a}{2}\right)$};
        \node at (4,0) [anchor = west] {$(2a,0)$};
    \end{tikzpicture}
    \caption{Minimal of type (1), $a>0$}
    \label{fig:stmin-1}
    \end{subfigure}\qquad
    \begin{subfigure}{.4\textwidth}
    \centering
    \begin{tikzpicture}[scale = .7]
        \draw[draw=black, fill=gray!60] (0,0) -- (2,2) -- (4,2) -- (6,0) -- cycle;
        \node at (2,1.3) {$\times$};
        \draw [dashed](2,1.3)--(2,2);
        \node at (4,.5) {$\times$};
        \draw [dashed](4,.5)--(4,2);
        \node at (0,0) [anchor = east] {$(0,0)$};
        \node at (2,2) [anchor = east] {$(a,a)$};
        \node at (4,2) [anchor = west] {$(a+b,a)$};
        \node at (6,0) [anchor = west] {$(2a+b,0)$};
    \end{tikzpicture}
    \caption{Minimal of type (2), $a>0$ and $b\geq 0$}
    \label{fig:stmin-2}
    \end{subfigure}\\[1.5em]
    \begin{subfigure}{.4\textwidth}
    \centering
    \begin{tikzpicture}[scale = .6]
        \draw[draw=black, fill=gray!60] (0,0) -- (2,2) -- (3,2) -- (7,0) -- cycle;
        \node at (2,.5) {$\times$};
        \draw [dashed](2,.5)--(2,2);
        \node at (0,0) [anchor = north] {$(0,0)$};
        \node at (2,2) [anchor = east] {$(a,a)$};
        \node at (3.4,2) [anchor = west] {$(a+b,a)$};
        \node at (7,0) [anchor = north] {$(na+b,0)$};
    \end{tikzpicture}
    \caption{Minimal of type (3a),\\ \vspace{1pt}\hspace{5pt} $a,b>0$, $n\in\Z_{\geq 1}$}
    \label{fig:stmin-3a}
    \end{subfigure}
    \begin{subfigure}{.4\textwidth}
    \centering
    \begin{tikzpicture}[scale = .6]
        \draw[draw=black, fill=gray!60] (0,0) -- (2,2) -- (3,2.5) -- (8,0) -- cycle;
        \node at (3,1.5) {$\times$};
        \draw [dashed](3,1.5)--(3,2.5);
        \node at (0,0) [anchor = north] {$(0,0)$};
        \node at (2,2) [anchor = east] {$(a+b,a+b)$};
        \node at (3.3,2.7) [anchor = west] {$(a,a+\frac{b}{n-1})$};
        \node at (8,0) [anchor = north] {$(na+b,0)$};
    \end{tikzpicture}
    \caption{Minimal of type (3c),\\ \vspace{1pt} \hspace{5pt} $a>0$, $-a<b<0$, $n\in\Z_{\geq 2}$}
    \label{fig:stmin-3c}
    \end{subfigure}\\[1.5em]
    \begin{subfigure}{.9\textwidth}
        \centering
        \begin{tikzpicture}[scale = .8]
        \draw[draw=black, fill=gray!60] (0,0) -- (2,2) -- (6,0) -- cycle;
        \node at (2,1.4) {$\times$};
        \draw [dashed](2,1.4)--(2,2);
        \node at (0,0) [anchor = north] {$(0,0)$};
        \node at (2,2) [anchor = south] {$(a,a)$};
        \node at (6,0) [anchor = north] {$(na,0)$};
        \begin{scope}[shift = {(8,0)}]
            \draw[draw=black, fill=gray!60] (0,2) -- (2,2) -- (6,0) -- (2,0) -- cycle;
            \node at (2,1.4) {$\times$};
            \draw [dashed](2,1.4)--(2,0);
            \node at (0,2) [anchor = south]  {$(0,a)$};
            \node at (2,2) [anchor = south] {$(a,a)$};
            \node at (6,0) [anchor = north] {$(na,0)$};
            \node at (2,0) [anchor = north] {$(a,0)$};
        \end{scope}
    \end{tikzpicture}
    \caption{Minimal of type (3b) (two representatives), $a>0$, $n\in\Z_{\geq 2}$}
    \label{fig:stmin-3b}  
    \end{subfigure}
    \caption{The minimal polygons from~\cite[Theorem 4.8]{LFPal-SF2}. All corners are fake or Delzant, except for the hidden corner at the top of the left representatives of (3b). If $b=0$ in the polygon of type (2), we obtain a non-simple system.}
    \label{fig:min-polys}
\end{figure}

By definition, all marked semitoric polygons can be obtained from a strictly minimal marked semitoric polygon by performing a sequence of corner chops and wall chops, so these polygons are particularly foundational. Between the papers~\cite{LFPal,LFPal-SF2,HoPa2017,LFP}, the techniques described in this section regarding families of integrable systems are used to obtain an explicit example for each strictly minimal semitoric polygon, a result which is summarized in~\cite[Theorem 1.11]{LFPal-SF2}.
Obtaining explicit examples for these minimal models was an important motivation behind the search for examples that led to the development of semitoric families and related concepts.

\subsection{The generalized coupled angular momenta as a two-parameter family}
\label{sec:stfam-coupled-spins-again}

Recall the generalized coupled angular momenta system from Section~\ref{sec:generalized-spins}, introduced in~\cite{HoPa2017}, with parameters $R_1,R_2,t_1,t_2,t_3,t_4\in\R$. For $s_1,s_2\in [0,1]$ take the parameters
\[
R_1 =1, \quad R_2=2, \quad t_1 = (1-s_2)(1-s_2), \quad t_2 = s_1 s_2, \quad t_3 = s_1 + s_2 - 2 s_1 s_2, \quad t_4 = s_1-s_2.
\]
Then we obtain a family of integrable systems which depends on two parameters. The image of the momentum map for varying $s_1,s_2$ is shown in Figure~\ref{fig:2FF-image}. As with the original coupled angular momenta system, the systems for which $s_1,s_2\in\{0,1\}$ look similar to the semitoric polygons which can be obtained for the system in the middle, with $s_1=s_2=\frac{1}{2}$. This subsystem, and the similarity between the images with $s_1,s_2\in\{0,1\}$ and the semitoric polygon, were already considered in the original paper~\cite{HoPa2017}, but we present it in this section since it can be best understood in the context of semitoric families.

\begin{figure}
    \centering
    \includegraphics[width=0.75\linewidth]{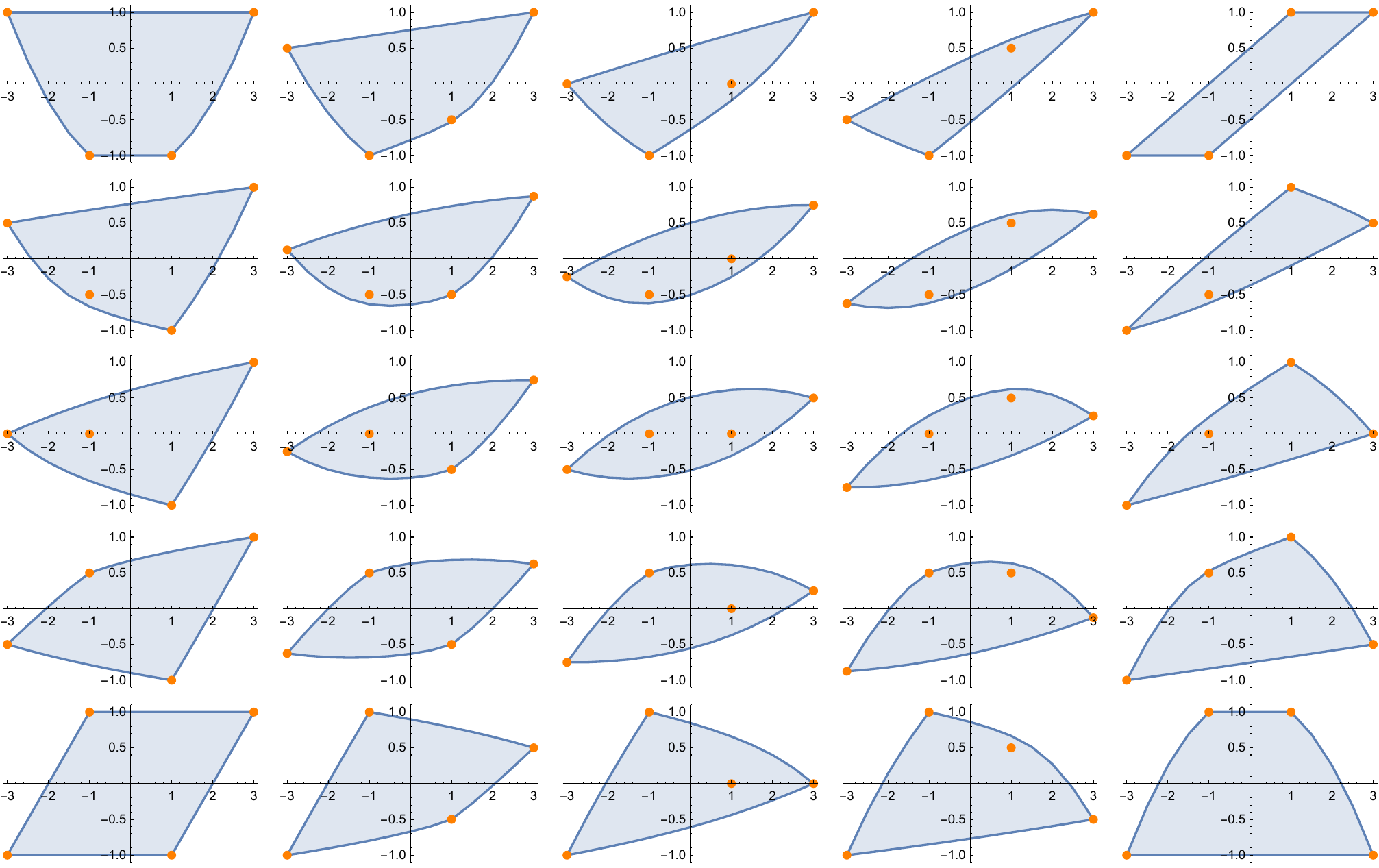}
    \caption{The image of the momentum map of the generalized coupled angular momenta system for various choices of parameters $s_1$ and $s_2$. Compare the systems in the corners with the representatives of the semitoric polygon shown in Figure~\ref{fig:st-poly-2FF}.}
    \label{fig:2FF-image}
\end{figure}

\section{Related topics and future directions}
\label{sec:closing}

There are many important and interesting aspects of this subject which we have either barely touched on or unfortunately not yet discussed at all in this paper.
In fact, there are far too many to even give an exhaustive list.
In this last section, we will point out a few interesting developments and results which are deeply related to integrable systems with torus symmetries but are not discussed above.
The descriptions here are brief, but we include references to more detailed accounts.
The last subsection is devoted to some speculation as to where the field is heading in the near future.

\subsection{\texorpdfstring{Integrable systems and underlying $S^1$-spaces}{Integrable systems and underlying S1-spaces}}
\label{sec:close-big-S1-section}

Let $(M,\om)$ be a compact symplectic $4$-manifold, and suppose that $J\colon M\to\R$ generates an $S^1$-action. Then $(M,\om,J)$ is called a \emph{Hamiltonian $S^1$-space}, often just called an \emph{$S^1$-space} for brevity. Karshon~\cite{karshon} introduced $S^1$-spaces and gave a classification of such spaces in terms of a certain type of weighted labeled graph, now called the \emph{Karshon graph} of the $S^1$-space.
If $(M,\om, (J,H))$ is a toric integrable system, then $(M,\om,J)$ is a $S^1$-space, and in that same paper Karshon explains how to determine the graph of $(M,\om,J)$ from the Delzant polytope of $(M,\om,(J,H))$. Furthermore, she shows that not all $S^1$-spaces arise in this way, and even gives conditions under which an $S^1$-space can be extended to a toric system. That is, given a Hamiltonian $S^1$-space $(M,\om,J)$ she gives necessary and sufficient conditions under which there exists a smooth $H\colon M\to\R$ such that $(M,\om,(J,H))$ is a toric integrable system. 

Hamiltonian $S^1$-spaces are now a central object in symplectic geometry, and this paper of Karshon's explains their interaction with toric integrable systems. 
There has been recent progress in understanding the interaction between the Hamiltonian $S^1$-space $(M,\om,J)$ and integrable systems, not necessarily toric, of the form $(M,\om,(J,H))$ for some $H$, which we review in this section.

\subsubsection{\texorpdfstring{Underlying $S^1$-spaces of semitoric systems}{Underlying S1-spaces of semitoric systems}}\label{sec:close-underlyingS1}

Let $(M,\om, (J,H))$ be a compact semitoric integrable system. Then $(M,\om,J)$ is a Hamiltonian $S^1$-space.
In~\cite{HSS2015}, the authors show how to read the Karshon graph of $(M,\om,J)$ off  of the (unmarked) semitoric polygon of $(M,\om, (J,H))$. The technique is very similar to Karshon's for the toric case~\cite{karshon}, but accounts for the added complexity arising from the cuts and fake or hidden corners in the polygon. It is worth noting that they show how to obtain the Karshon graph from \emph{only the semitoric polygon invariant} without needing to take the twisting index or Taylor series into consideration. Recall that the Karshon graph classifies Hamiltonian $S^1$-spaces up to $S^1$-invariant symplectomorphisms intertwining the momentum maps.
Thus, not only do Hohloch-Sabatini-Sepe provide a useful procedure for producing the Karshon from the semitoric polygon, they also obtain the following useful corollary:

\begin{corollary}[{Follows from~\cite[Theorem 3.1]{HSS2015}}]\label{cor:S1-type}
    Let $(M_i,\om_i,(J_i,H_i))$ be a compact semitoric system for $i=1,2$, and suppose that the unmarked semitoric polygon invariant is the same for these two systems. Then there exists an $S^1$-equivariant symplectomorphism $\Phi\colon M_1\to M_2$ such that $\Phi^* J_2 = J_1$.
\end{corollary}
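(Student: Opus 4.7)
The plan is to chain together two classification results: the main theorem of~\cite{HSS2015} (the one referenced in the corollary), which computes the Karshon graph from the unmarked semitoric polygon, and Karshon's classification of Hamiltonian $S^1$-spaces~\cite{karshon}, which says that the Karshon graph is a complete invariant up to $S^1$-equivariant symplectomorphism intertwining the moment maps.

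First I would invoke~\cite[Theorem 3.1]{HSS2015}, which gives an explicit procedure to read off the Karshon graph of the underlying Hamiltonian $S^1$-space $(M_i,\om_i,J_i)$ from the unmarked semitoric polygon invariant of $(M_i,\om_i,(J_i,H_i))$. The key point here, and the reason this corollary works, is that the procedure only uses the polygon data (including the positions of the marked points and cuts, as well as the fake/hidden corner information), and does \emph{not} depend on the height of the marked points, the Taylor series invariants, or the twisting indices. Since both systems are compact semitoric, Theorem~\ref{thm:marked-poly} applies and guarantees each has a well-defined (unmarked) semitoric polygon invariant.

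Next, by hypothesis the unmarked semitoric polygon invariants of the two systems agree, so applying the procedure of~\cite[Theorem 3.1]{HSS2015} to each yields identical Karshon graphs for $(M_1,\om_1,J_1)$ and $(M_2,\om_2,J_2)$. Finally, by Karshon's classification theorem~\cite{karshon}, two compact four-dimensional Hamiltonian $S^1$-spaces with the same Karshon graph are isomorphic as $S^1$-spaces; that is, there exists a symplectomorphism $\Phi\colon M_1\to M_2$ which is $S^1$-equivariant and satisfies $\Phi^*J_2 = J_1$. This $\Phi$ is the desired map.

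The ``hard part'' is really not in this corollary itself but in the two ingredients it assembles: the reading-off procedure of~\cite{HSS2015}, which requires a careful analysis of how the fake/hidden corners, cuts, and interior edges of the polygon translate into the edges and vertex labels of the Karshon graph, and Karshon's classification, which is nontrivial. Once those are in hand, the corollary is essentially a two-line deduction. The only subtle point to double-check is the compatibility of conventions: the polygon invariant in~\cite{HSS2015} is defined via the same straightening construction of V\~u Ng\d{o}c recalled in Section~\ref{sec:construct-poly}, so the unmarked polygon appearing in our statement matches the one used as input to their Theorem 3.1, and the symplectomorphism produced by Karshon's theorem automatically intertwines the moment maps by construction.
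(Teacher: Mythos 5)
Your argument is exactly the one the paper has in mind: read off the Karshon graph of $(M_i,\om_i,J_i)$ from the unmarked semitoric polygon via~\cite[Theorem 3.1]{HSS2015}, note that this procedure uses no data beyond the polygon itself, and then invoke Karshon's classification of compact Hamiltonian $S^1$-spaces~\cite{karshon} to upgrade equality of Karshon graphs to an $S^1$-equivariant symplectomorphism intertwining $J_1$ and $J_2$. This matches the paper's reasoning both in the surrounding discussion of Section~\ref{sec:close-underlyingS1} and in the remark preceding the corollary.
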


Along the way, their paper implies that certain Karshon graphs cannot be obtained from any semitoric system in this way.
The same authors, along with M.~Symington, are currently working on a project to go the other direction. That is, to determine under what conditions on $(M,\om,J)$ does there exist an $H$ such that $(M,\om,(J,H))$ is semitoric~\cite{HSSS-st-lift}.

\subsubsection{Lifting to hypersemitoric systems}\label{sec:close-hst-lift}

In view of the observation from~\cite{HSS2015} that not all Hamiltonian $S^1$-spaces can be lifted to a semitoric system, it is natural to wonder if there is a larger, but still relatively nice, class of integrable systems to which \emph{all} Hamiltonian $S^1$-spaces can be lifted. 
In~\cite{HP-extend}, the authors observe that even allowing all non-degenerate singularities is not enough to lift all possible $S^1$-spaces: there exist examples of $S^1$-spaces $(M,\om,J)$ such that any integrable system $(M,\om,(J,H))$ must have at least one degenerate singular point.
Therefore, in~\cite{HP-extend}, a new class of integrable systems called hypersemitoric was introduced, which allows all non-degenerate points and also a certain relatively mild type of degenerate point called parabolic, and the following result was established:

\begin{theorem}[{\cite[Theorem 1.7]{HP-extend}}]\label{thm:hst-lift}
    Let $(M,\om,J)$ be any (compact) Hamiltonian $S^1$-space. Then there exists a smooth function $H\colon M\to\R$ such that $(M,\om,(J,H))$ is a hypersemitoric system.
\end{theorem}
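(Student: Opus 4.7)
The plan is to build $H$ by first constructing a family of Morse functions on the reduced spaces $M_c := J^{-1}(c)/S^1$ for regular values $c$ of $J$, and then assembling these into a smooth $S^1$-invariant global function on $M$. By a theorem of Marsden-Weinstein-Meyer, for each regular value $c$ of $J$, $M_c$ is a 2-dimensional symplectic orbifold (possibly empty or disconnected), and a Morse function $h_c$ on $M_c$ lifts uniquely to an $S^1$-invariant function $\widetilde{h}_c$ on $J^{-1}(c)$, which automatically Poisson-commutes with $J$. Thus the integrability of $(J,H)$ will follow essentially for free; the real content is controlling the singularity types.

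First I would describe the local model near each fixed point and each $\Z_k$-sphere using the Karshon graph of $(M,\om,J)$. Near an isolated fixed point with $S^1$-weights $\{m,n\}$, an equivariant Darboux chart allows us to prescribe $H$ locally as one of a small list of $S^1$-invariant polynomial forms: depending on the choice, the resulting singularity of $(J,H)$ is elliptic-elliptic (when $m,n$ have the same sign and $H$ is chosen positive-definite in a suitable sense), focus-focus (when $m,n$ have the same sign and $H$ is chosen as a rotated saddle), or hyperbolic-elliptic (when $m,n$ have opposite signs). Similarly, near a fixed surface or $\Z_k$-sphere the local model forces an elliptic-transverse normal form. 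The point is that the Karshon graph combinatorially prescribes which local models are even available, and we simply pick one at each vertex.

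Next I would construct the global $H$ by interpolating these local models. Cover $M$ by the preimages under $J$ of small intervals in its image. Over a subinterval containing only regular values of $J$, the map $J$ is (by Ehresmann) an $S^1$-equivariant fibration, and $M_c$ varies smoothly with $c$ as a symplectic orbifold; choose a smoothly varying family of Morse functions $h_c$ on $M_c$, and lift to $H$. Over a subinterval containing a critical value of $J$ (corresponding to a vertex of the Karshon graph), use the prescribed local model from the previous step. Glue using a partition of unity subordinate to this cover, adjusted $S^1$-invariantly, and taking care that the gluing preserves Morse-ness of $h_c$ away from the prescribed critical fibers.

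The main obstacle, as expected, is the degenerate singularities produced by the gluing and by the bifurcations of the family $h_c$ as $c$ varies. Generically, a 1-parameter family of Morse functions on a surface undergoes only birth-death bifurcations (cusps), and under the translation to the 4-dimensional setting via $S^1$-lift, these bifurcations correspond precisely to parabolic points in the integrable system $(M,\om,(J,H))$ - this is the content that makes hypersemitoric the right class. I would invoke a transversality argument (a version of Cerf theory adapted to the orbifold and equivariant setting) to arrange that $h_c$ has only Morse critical points away from a finite set of values of $c$, and only birth-death singularities at those values. A separate check is needed near the orbifold points of $M_c$ (coming from $\Z_k$-spheres in $M$) and on the boundary of $M_c$ when $c$ is near a fixed-surface value, where the local forms must be verified to be parabolic rather than of a worse degenerate type; an explicit computation in the local model suffices. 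Finally, a standard perturbation argument allows us to eliminate any remaining unwanted degeneracies while preserving the non-degenerate and parabolic points already constructed.
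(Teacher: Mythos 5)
Your high-level strategy — build $H$ by choosing a one-parameter family of Morse functions on the reduced orbifolds $M_c$, lift it $S^1$-invariantly, and control the singularity types via the Karshon data and a Cerf-type transversality argument — is genuinely in the same spirit as the construction in~\cite{HP-extend}, and the observation that birth-death bifurcations of critical points in the family $h_c$ should correspond to parabolic points of $(J,H)$ is exactly the right intuition for why ``hypersemitoric'' is the natural target class. However, there are concrete errors and unaddressed gaps that need to be fixed before this would constitute a proof.

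The local classification at isolated fixed points is incorrect, and in a way that reflects a misunderstanding of the structure rather than a slip. At an isolated fixed point with $S^1$-weights $\{m,n\}$ of the \emph{same} sign, $J$ has a local extremum, so (after Eliasson) $J$ contributes an elliptic block; the point can only be elliptic-elliptic, never focus-focus. A focus-focus point has $J$-normal form $x_1y_2-x_2y_1$, which is a quadratic form of signature $(2,2)$, so it requires weights $\{1,-1\}$ — opposite signs, each of absolute value one. And a hyperbolic-elliptic point cannot occur at an \emph{isolated} fixed point at all: if the Williamson type is HE then $J$ must be supported on the elliptic block (since the hyperbolic flow is not periodic), so $\mathcal{X}_J$ vanishes on the entire two-plane tangent to the hyperbolic block, i.e., the point lies on a fixed surface. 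These are not cosmetic: the case you slot under ``hyperbolic-elliptic at an isolated fixed point'' simply does not exist, and the genuinely delicate case — an isolated interior fixed point with weights $\{m,n\}$, $m>0>n$, but $\{m,n\}\neq\{1,-1\}$ — is not discussed at all. Such a point must be elliptic-elliptic, hence sits at a corner of the straightened affine picture \emph{inside} the $J$-range, and it is precisely this configuration that forces the appearance of a flap with hyperbolic-regular and parabolic points. This is one of the main observations of~\cite{HP-extend} and is the real reason one cannot get away with only non-degenerate singularities; your proposal never confronts it.

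Beyond that, two of your load-bearing steps are asserted rather than argued. The passage from ``generic one-parameter families of functions on a surface have only birth-death singularities'' to ``the resulting degenerate points of $(J,H)$ are parabolic in the sense of integrable systems'' requires a computation comparing the $A_2$-jet in the reduced picture with the normal form of a parabolic orbit in the 4-manifold, including a check that the second-order nondegeneracy conditions in the definition of a parabolic point are met; invoking Cerf theory does not give this for free. Similarly, the final ``standard perturbation argument to eliminate any remaining unwanted degeneracies'' has to be performed inside the class of $S^1$-invariant functions (otherwise $\{J,H\}=0$ is destroyed), must not introduce new degeneracies near the fixed surfaces or along the $\Z_k$-spheres, and must be shown to leave the prescribed local models near the fixed-point set intact. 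The analysis near the orbifold points of $M_c$ and near the boundary values of $J$ — which you explicitly flag as ``a separate check'' and ``an explicit computation in the local model suffices'' — is in fact where most of the technical weight of the theorem lives, and cannot be treated as a footnote. In short, the scaffolding is reasonable and resembles the approach in~\cite{HP-extend}, but the local model table needs to be corrected, the interior fixed points with weights other than $\{1,-1\}$ need to be confronted directly, and the Cerf/perturbation steps need actual proofs adapted to the $S^1$-equivariant orbifold setting.
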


See Section~\ref{sec:hst} for a discussion of hypersemitoric systems and parabolic points.

\subsection{Almost toric fibrations and symplectic topology}
\label{sec:close-ATFs-symp-top}

One of the important properties of semitoric systems is the existence of the global $S^1$-action on the system (see Section~\ref{sec:close-big-S1-section}).
That being said, there are actually a lot of interesting things to say for systems which have the same restrictions on singularities that semitoric systems have, but without the requirement of the $S^1$-action.
These are called \emph{almost toric fibrations}~\cite{Sy2003}.

\subsubsection{Almost toric fibrations}
\label{sec:ATFs}

In~\cite{Sy2003}, Symington discusses an object called an \emph{almost toric fibration} (ATF) which exhibits the same types of singularities as semitoric systems (elliptic-elliptic, elliptic-regular, and focus-focus), but has no requirement of a global $S^1$-action.
In this case, she is still able to obtain a polygon, called an \emph{almost toric base diagram}, by performing cuts and straightening out the integral affine structure, but in general the cuts that she performs will not all be parallel (as they are in semitoric systems). An example is shown in Figure~\ref{fig:atf}.

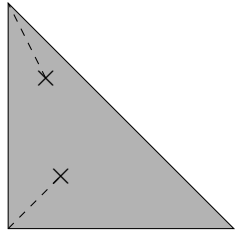
\begin{figure}
\begin{center}
\begin{tikzpicture}[scale=1.0]
\filldraw[draw=black, fill=gray!60] (0,0) node[anchor=north,color=black]{}
  -- (0,0) node[anchor=south,color=black]{}
  -- (0,3) node[anchor=south,color=black]{}
  -- (3,0) node[anchor=south,color=black]{}
  -- cycle;
\draw [dashed] (0,0) -- (.7,.7);
\draw (.7,.7) node[] {$\times$};
\draw [dashed] (.5,2) -- (0,3);
\draw (.5,2) node[] {$\times$};  
\end{tikzpicture}
\end{center}
\caption{An example of an almost toric base diagram for a fibration with two focus-focus points.}
\label{fig:atf}
\end{figure}

Around each focus-focus point there is a local $S^1$-action, and while these actions can be glued together into a global $S^1$-action for semitoric systems, this is not the case in general for ATFs. Note that any ATF over a disk with exactly one focus-focus point can always be viewed as a semitoric integrable system $(M,\om,(J,H))$, choosing coordinates so that $J$ generates the $S^1$-action.

ATFs were introduced in~\cite{Sy2003} and in that paper many basic properties of these fibrations were established, and they were classified up to fiber preserving diffeomorphisms in~\cite{LeuSym}. Recently, a classification up to fiber preserving symplectomorphisms has appeared~\cite{tang-almosttoric}.
ATFs have become a key tool to approach a wide variety of problems in symplectic topology~\cite{Via,Viainf,Viadel,EvaSmi,PasTon,CasVia,BreSch,AA-Via-Vic-singLagFib}.
Semitoric polygons were used to study the (non)-displacibility properties of the fibers of the momentum map in~\cite{HS-nondisplace}.

\subsubsection{Symplectic capacities}\label{sec:capacities}
Semitoric polygons~\cite{VN2003}, and more generally the polygons which appear in Symington's study almost toric fibrations~\cite{Sy2003}, can be used to develop combinatorial techniques for computing, or at least obtaining bounds for, certain symplectic capacities.
Equivariant capacities were introduced in~\cite{FiPaPe2016}, and the examples provided in that paper include capacities measuring how much volume of a symplectic manifold can be filled by an equivariantly embedded ball (or disjoint unions of equivariantly embedded  balls). 
These examples were computed in some cases in~\cite{packingIGL}.
This was preceded by the papers of Pelayo~\cite{Pe2006, Pe2007} and Pelayo-Schmidt~\cite{PeSc2008}, which studied torus-equivariant packing-type invariants of toric integrable systems.
More recently, Delzant polytopes were also used to obtain bounds on the Hofer-Zehnder capacity of a symplectic toric manifold~\cite{Liu-HZ-bounds}.

Let us now give a very rough idea of how equivariant capacities can be used to obtain interesting information (usually bounds) about traditional capacities.
Essentially, the idea is that equivariantly embedded balls appear as certain types of simplices in the Delzant polytope, semitoric polygon, or almost toric base diagram, depending on the situation. Then, the maximum size of the embedded ball can be computed by considering the convex geometry of the associated combinatorial object. Examples of traditional (non-equivariant) capacities that are computed by finding the largest symplectically embedded object (ball, polydisk, ellipsoid, etc), can therefore be \emph{bounded} by noting that the equivariant picture gives an example of such an embedded object, even though it may not be the largest once the equivariant condition is removed.

A natural way to proceed, then, is to consider a sequence of different ATFs on the same symplectic manifold, and obtain a sequence of bounds on the capacity at hand.
This is essentially the technique used in~\cite{CGHMP,CasVia,MMW-staircasepatterns, macgill, four-periodic} to study what is called ``infinite staircase'' behavior in capacities related to the largest possible size of embedded ellipsoid. The infinite staircase behavior was first observed and studied by McDuff-Schlenk~\cite{MS12}.

\subsection{Quantum considerations}
\label{sec:close-quantum}

This entire article has focused on classical integrable systems, which model classical mechanical systems, but there is an entire deep theory of the quantum counterpart to these systems, and many interesting questions about the interactions between the classical and quantum worlds. See, for instance, the papers~\cite{VNSepe,Pelayo-survey23} for surveys of both the classical and quantum theory.

Explaining this in any detail is outside of the scope of the current paper, but let us just mention that the \emph{inverse spectral problem} was one of the original motivations for the classification of simple semitoric systems~\cite{PeVN2009,PeVN2011}. 
This question asks if a classical integrable system can be recovered from the semiclassical joint spectra of the quantum counterpart.
One possible way to approach this problem is to recover certain invariants of the classical system from the semiclassical spectrum, and then prove that those invariants completely determine the classical system.
This was the strategy taken in~\cite{ChPeVN2013}, in which the 
authors show how to recover the Delzant polytope from the semiclassical spectrum of a quantum toric integrable system, and then apply the classification of toric systems (Theorem~\ref{thm:toric-classification}) to conclude that this determines the classical system. 
Generalizing this technique to simple semitoric systems necessitates a classification of such systems, which was provided in~\cite{PeVN2009,PeVN2011} in terms of five invariants.
Therefore, the question that remained was: can the five semitoric invariants be recovered from the semiclassical spectrum of a simple quantum semitoric system? In~\cite{LFPeVN2016}, it was proved that the first four invariants (all except for the twisting index) could be recovered from the spectrum, and in~\cite{LFVN} it was proved that all five invariants could be recovered.
Combining this with the classification result, this implies that a simple semitoric system is completely determined by the semiclassical spectrum of the associated quantum system.
Moreover, the paper~\cite{LFVN} also provides explicit algorithms to recover the simple semitoric invariants from the spectrum.
This strategy is explained in detail, along with several other topics in this area, in~\cite{PeVNfirststeps,Pelayo-inverse}.

Non-simple semitoric systems make an interesting appearance here. It has \emph{not} been proven that non-simple systems can be recovered from the semiclassical spectrum of the associated quantum system, and in fact the expectation is that non-simple semitoric systems are not spectrally determined, see~\cite[Section 8.2]{PPT-nonsimple}.

\subsection{Moving beyond semitoric}
\label{sec:beyond}

So far we have discussed toric systems (Section~\ref{sec:toric}) and semitoric systems (Sections~\ref{sec:st} and~\ref{sec:st-families}). In recent years, there has been great success with classifying, understanding, and constructing semitoric systems, so it is time to move further. There are two things to notice about semitoric systems:
\begin{itemize}
    \item there are restrictions on the types of singularities which can arise (hyperbolic blocks are excluded, along with any degenerate singularities); and
    \item they are confined to dimension four.
\end{itemize}
It's important to note that both hyperbolic singularities and certain degenerate singularities are common in physical systems, and of course many physically relevant systems have dimension greater than four. Thus, this gives us two ways to generalize beyond semitoric systems: allowing more types of singularities  or allowing higher dimensions, or both.
We discuss these two options briefly in the next two sections, Sections~\ref{sec:hst} and~\ref{sec:close-cmlx-one}.
There are many ways to expand on the class of allowed singularities compared to semitoric systems, but in Section~\ref{sec:hst} we focus on hypersemitoric systems because that is one of the most active areas at the moment.
Another interesting class of integrable systems are those which are \emph{non-degenerate}, i.e.~all singular points in the system are non-degenerate in the sense of Theorem~\ref{thm:eliasson}, and in particular those which are non-degenerate and also have an underlying complexity one torus action, as in Section~\ref{sec:close-cmlx-one}.

\subsubsection{Hypersemitoric systems}
\label{sec:hst}

Beyond the non-degenerate singular points described in Theorem~\ref{thm:eliasson}, there are many types of degenerate singular points, and one of the most well-understood classes are those known as \emph{parabolic}~\cite{bol-parabolic,EG-cusps,KudMar-circle,KudMar-invariants}. 
Parabolic points are common in physical examples, such as the Kovalevskaya top~\cite{BolRikFom-Ktop}.
Hyperbolic-regular points appear in one-parameter families, and there are only a few options for the behavior of an endpoint of one of these families. In particular, in the presence of a global $S^1$-action the only options are hyperbolic-elliptic points (which must occur on a fixed surface of the $S^1$-action) or degenerate singular points, which are typically parabolic.
Since they occur in natural examples, are relatively well behaved, and often appear with hyperbolic regular points, it is reasonable to consider the class of systems which are allowed to have parabolic singularities in addition to any non-degenerate singularity:

\begin{definition}[\cite{HP-extend}]
An integrable system $(M,\om,(J,H))$ is \emph{hypersemitoric} if
\begin{itemize}[noitemsep]
    \item The Hamiltonian flow of $J$ generates an $S^1$-action;
    \item $J$ is proper; and
    \item all singular points of the system are either non-degenerate or of parabolic type.
    \end{itemize}
\end{definition}

As mentioned in Section~\ref{sec:close-hst-lift}, there exist Hamiltonian $S^1$-spaces that cannot be lifted to a semitoric system, or indeed to any system without any degenerate singular points, and hypersemitoric systems were originally developed to be the ``nicest'' class of integrable systems to which all Hamiltonian $S^1$-spaces can be lifted~\cite{HP-extend}.
Since their introduction in the context of lifting $S^1$-spaces, there have been many papers studying or making use of hypersemitoric systems~\cite{LFPal-SF2,GuHo,GuHo-firststeps,HHM,dullin-pelayo}.
They are a natural candidate for classification, extending the semitoric classification from~\cite{PeVN2009,PeVN2011,PPT-nonsimple}.
In particular, a polygon type invariant has already been constructed for these systems~\cite{EHS-hyp-poly}.

For a simple example of a hypersemitoric system, consider the manifold $M=S^2 \times \T^2$ with symplectic form $\om$ given by the product of the area forms. Let $J$ denote the height function on the sphere. Then $J$ generates an $S^1$-action, but it can be seen that this cannot be extended to a $\T^2$-action (since the genus of the symplectic reduction is not zero, see~\cite{karshon}) or to a semitoric system (for the same reason, see~\cite{HSS2015}). Let $H$ denote the height function of the torus standing on its end (the one with one index 0, two index 1, and one index 2 points). Then $(M,\om,F=(J,H))$ is a hypersemitoric system. 
All singularities are non-degenerate, since they can be written as the product of two Morse functions.
The index 1 points of the height function on the torus produce hyperbolic blocks in the singularities of $F$, and for intermediate values of $H$ the fibers of $F$ are disconnected. See Figure~\ref{fig:S2-times-T2} and~\cite[Example 3.6]{HP-extend}.

\begin{figure}
\centering
\begin{tikzpicture}
\filldraw[thick, fill = gray!60] (0,0) node[anchor = north,color = black]{}
  -- (0,3) node[anchor = south,color = black]{}
  -- (3,3) node[anchor = south,color = black]{}
  -- (3,0) node[anchor = north,color = black]{}
  -- cycle;
\draw [thick] (0,2.25) -- (3,2.25);
\draw [thick] (0,.75) -- (3,.75);
\draw (0,.75) node[anchor=east]{HE};
\draw (0,2.25) node[anchor=east]{HE};
\draw (3,.75) node[anchor=west]{HE};
\draw (3,2.25) node[anchor=west]{HE};
\draw (1.5,.75) node[anchor=south]{HR};
\draw (1.5,2.25) node[anchor=south]{HR};
\draw (0,0) node[anchor=north]{EE};
\draw (0,3) node[anchor=south]{EE};
\draw (3,3) node[anchor=south]{EE};
\draw (3,0) node[anchor=north]{EE};
\draw[black,fill=black] (0,0) circle (.05cm);
\draw[black,fill=black] (0,.75) circle (.05cm);
\draw[black,fill=black] (0,2.25) circle (.05cm);
\draw[black,fill=black] (0,3) circle (.05cm);
\draw[black,fill=black] (3,0) circle (.05cm);
\draw[black,fill=black] (3,.75) circle (.05cm);
\draw[black,fill=black] (3,2.25) circle (.05cm);   
\draw[black,fill=black] (3,3) circle (.05cm);
\end{tikzpicture}
\caption{The image $F(M)$ of the hypersemitoric system on $S^2\times \T^2$. The points are labeled by their type: HR (hyperbolic-regular), HE (hyperbolic-elliptic), and or EE (elliptic-elliptic). The points in the boundary that are not labeled EE or HE are all elliptic-regular, and the other unlabeled points are all regular. The preimage of any of the regular values in the central region (between the two lines of HR points) is diffeomorphic to the disjoint union of two copies of $\T^2$.}
\label{fig:S2-times-T2}
\end{figure}
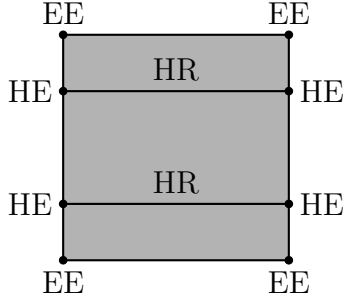

\subsubsection{Complexity one integrable systems}
\label{sec:close-cmlx-one}

In Sections~\ref{sec:st} and~\ref{sec:st-families}, we  focused on four-dimensional integrable systems of the form $(M,\om,F=(J,H))$ in which the flow of $\mathcal{X}_J$ generates an $S^1$-action. That is, we are considering integrable systems $(M,\om,(J,H))$ which have an \emph{underlying $S^1$-space} $(M,\om,J)$. 

Let $(M,\om)$ be a symplectic manifold of dimension $2n$. Then a \emph{complexity one torus action} on $(M,\om)$ is an effective Hamiltonian action of $\T^{n-1}$, and for such an action we call $(M,\om,\T^{n-1},\Phi)$ a \emph{complexity one space}, where $\Phi\colon M\to\mathfrak{t}^*$ is the momentum map of the action. For example, a Hamiltonian action of $S^1$ on a symplectic 4-manifold is complexity one.
Complexity one spaces are a huge area of research~\cite{SepeTolman,KT-top-of-quotients,SabSepe-top-properties,HolmKessler-equivcoh,HolmKessler-combo}, and for instance there have been several important invariants defined, and even a classification for a broad class of such spaces~\cite{KaTo-centered, KT-tall-invariants,KT-tall-classification}. In a complexity one space, $\Phi(M)\subset \mathfrak{t}^*$ is a polytope, and similar to the other classifications discussed in this paper, the complete invariant is obtained by labeling this polytope with extra data.

Now, associate $\mathfrak{t}^*$ with $\R^{n-1}$, and by a slight abuse of notation let $\Phi\colon M\to \R^{n-1}$. Then it turns out that the components of $\Phi$ are automatically independent almost everywhere and in Poisson involution, so, in particular, $\Phi$ can be thought of as being only one function short of being an integrable system.  
A \emph{complexity one integrable system} is an integrable system $(M,\om, F=(\Phi,f))$ where $(M,\om,\Phi)$ is a complexity one space.

As in the case of semitoric systems in dimension four, complexity one integrable systems (which are more general than semitoric, even in dimension four) strike a delicate balance: the underlying complexity one space provides enough rigidity that these systems are well behaved, while the freedom in the last component of the momentum map ensures that there are a wide variety of examples. Note that here we haven't included any restrictions on the types of singularities that can occur, as in the four-dimensional case it would be reasonable to start by considering systems with relatively nice singularities (non-degenerate, or even more restrictive) and expand to cases with a wider variety of allowed singularities.
For instance, recently Sepe-Tolman~\cite{SepeTolman} obtained conditions under which a complexity one integrable system with only non-degenerate singularities has connected fibers, generalizing known results about the connectedness of fibers in toric~\cite{Atiyah,GuSt1982,LMTW} and semitoric~\cite{VN2003,wac-thesis} systems.

With the notion of complexity one integrable systems in mind, all of the work that has been completed in dimension four can also be explored in higher dimensions: classifications, lifting problems (to various types of integrable systems), and translating between invariants of integrable systems and invariants of complexity one spaces, just to name a few possible directions. 
Given a complexity one space $(M,\om,\Phi)$, families of the form $(M,\om,(\Phi,f_t))$ for $0\leq t \leq 1$, generalizing the semitoric families discussed in Section~\ref{sec:st-families}, would be a fascinating subject to explore (considering bifurcations of such systems, for instance).
There will be significant difficulties in pursuing these questions in higher dimensions, but this direction promises to be fruitful, and it is a necessary step to attack natural questions in integrable systems.

\subsection{Looking forward}
\label{sec:close-looking-forward}

The classification of toric integrable systems from the late 1980s is a beautiful and useful result, but toric systems are much more symmetric than most examples of integrable systems. Semitoric systems are already a huge step towards more generality, and appear in many natural systems. They are much more complicated than toric systems, but nevertheless there has by now been significant progress in understanding, classifying, constructing, and investigating the bifurcation theory of such systems.
As semitoric systems and semitoric families are becoming better understood, it is a natural time to take the next steps, which we believe will consist of continuing to explore generalizations of semitoric systems, such as hypersemitoric systems, non-degenerate complexity one systems, and more general classes of complexity one systems.
In particular, we expect the following topics to continue to generate a large amount of activity:
\begin{enumerate}
    \item \textbf{Classifications:} Invariants and classifications of hypersemitoric systems, non-degenerate systems with underlying complexity one torus actions, and more general classes of complexity one systems;
    \item \textbf{The Lifting Question:} Conditions on when a complexity one action can be lifted an integrable system of a desired type, such as toric, non-degenerate, or higher dimensional analogues of semitoric;
    \item \textbf{Bifurcation Theory:} Bifurcation theory and deformations of families of the form $(M,\om,(\Phi,f_t))$, ${0\leq t \leq 1}$, where $(M,\om,\Phi)$ is a complexity one space, and in general bifurcation theory of integrable systems with symmetries;
    \item \textbf{Singular Integral Affine Structures:} Development of a rigorous notion of a singular integral affine structure on the base of the (singular Lagrangian) fibration determined by an integrable system, and determining conditions under which this structure does or does not determine the integrable system up to isomorphism. 
    See Remark~\ref{rmk:int-affine-question}.
\end{enumerate}
There is already a considerable body of research on semitoric systems, and adapting all of these techniques and results to the more general scenarios outlined above promises many opportunities for further exploration and discovery and, as usual, calls for more work.

\bibliographystyle{alpha}
\bibliography{ref}

 {\small
   \noindent
   \\
   Joseph Palmer\\
   Department of Mathematics,\\
   Seeley Mudd Building,\\
   Amherst College,\\ Amherst, MA, USA, 01002.\\ 
   {\em E-mail:} \texttt{jpalmer@amherst.edu}
 }

\end{document}